\documentclass{amsart}
\usepackage[numbers]{natbib}
\usepackage{amsfonts,amsmath,amssymb,amsthm}
\usepackage[hmargin=1.3in,vmargin=1.3in]{geometry}
\usepackage{xcolor}
\usepackage{comment}
\usepackage{import}
\usepackage{dsfont}
\usepackage{graphicx}
\usepackage{enumitem}

\usepackage{caption}
\usepackage{subcaption}

\newcommand{\E}{\mathbb{E}}
\newcommand{\W}{\overline{\mathcal{W}}}
\newcommand{\iW}{\widehat{\mathcal{W}}}
\renewcommand{\P}{\mathbb{P}}
\renewcommand{\tilde}{\widetilde}

\newtheorem{rem}[]{Remark}
\newtheorem{ex}[]{Example}
\newtheorem{thm}[]{Theorem}

\newtheorem{lemma}[]{Lemma}
\newtheorem{corollary}[]{Corollary}

\newcommand{\R}{\mathbb{R}}                        %
\renewcommand{\P}{\mathbb{P}}                      %
\newcommand{\Q}{\mathbb{Q}}                        %
\newcommand{\norm}[1]{\lVert#1\rVert}              %
\usepackage{tikz}
\usetikzlibrary{decorations.markings}
\usetikzlibrary{shapes.geometric}

\pgfdeclarelayer{edgelayer}
\pgfdeclarelayer{nodelayer}
\pgfsetlayers{edgelayer,nodelayer,main}

\tikzstyle{none}=[inner sep=0pt]
\tikzset{new/.style={thick}}

\usepackage{dsfont}

\usepackage{caption}
\usepackage{subcaption}
 
\usepackage{graphicx} 

\begin{document}

\title[Out-of-sample prediction error of the $\sqrt{\text{LASSO}}$ et. al.]{The out-of-sample prediction error of the $\sqrt{\text{LASSO}}$ and related estimators}

\author{Jos\'e Luis Montiel Olea}
\address{Jos\'e Luis Montiel Olea\newline
\mbox{}\quad\ Department of Economics, Cornell University}
\email{montiel.olea@gmail.com}

\author{Cynthia Rush} 
\address{Cynthia Rush\newline
\mbox{}\quad\ Department of Statistics, Columbia University}
\email{cynthia.rush@columbia.edu}

\author{Amilcar Velez}
\address{Amilcar Velez\newline
\mbox{}\quad\ Department of Economics, Northwestern University}
\email{amilcare@u.northwestern.edu}

\author{Johannes Wiesel}
\address{Johannes Wiesel\newline
\mbox{}\quad\ Department of Mathematics, Carnegie Mellon University}
\email{wiesel@cmu.edu}

\maketitle

\begin{abstract}
We study the classical problem of predicting an outcome variable, $Y$, using a linear combination of a $d$-dimensional covariate vector, $\mathbf{X}$. We are interested in linear predictors whose coefficients solve:
\begin{align*}
\inf_{\boldsymbol{\beta} \in \mathbb{R}^d} \Big( \mathbb{E}_{\mathbb{P}_n} \big[ \big|Y-\mathbf{X}^{\top} \boldsymbol{\beta} \big|^r \big] \Big)^{1/r} +\delta \, \rho\left(\boldsymbol{\beta}\right),
\end{align*}
where $\delta>0$ is a regularization parameter, $\rho:\R^d\to \R_+$ is a convex penalty function, $\mathbb{P}_n$ is the empirical distribution of the data, and $r\geq 1$. Our main contribution is a new bound on the out-of-sample prediction error of such estimators. 

The new bound is obtained by combining three new sets of results. First, we provide conditions under which linear predictors based on these estimators solve a \emph{distributionally robust optimization} problem: they minimize the worst-case prediction error over distributions that are close to each other in a type of \emph{max-sliced Wasserstein metric}. Second, we provide a detailed finite-sample and asymptotic analysis of the statistical properties of the balls of distributions over which the worst-case prediction error is analyzed. Third, we present an oracle recommendation for the choice of regularization parameter, $\delta$, that guarantees good out-of-sample prediction error.   
\end{abstract}

\section{Introduction}

The extent to which prediction algorithms can perform well not just on \emph{training} data, but also on new, unseen, \emph{testing} inputs is a central concern in machine learning. 
In fact, reducing a predictor’s testing error—or equivalently, improving its ``out-of-sample’’ performance or ``generalization error’’—possibly at the expense of increased training error, is a typical informal motivation for introducing regularization strategies in statistical estimation; see, for example,  \cite[Chapter 7]{ESL2017} and \cite[Chapter 7]{DLbook:2016}. More generally, the study of issues related to problems in which training and testing environments differ from one another is the subject of several recent, rapidly growing areas of research at the intersection of machine learning and statistics: transfer learning \citep{kpotufe2021marginal}, distributional shifts \cite{duchi2021learning,sugiyama2007covariate,adjaho2022externally}, domain adaptation \citep{mansour2009domain, ben2010theory}, adversarial attacks \cite{kurakin2016adversarial, goodfellow2014explaining}, learning under biased sampling \cite{sahoo2022learning} and cross-domain transfer performance \cite{andrews2022transfer} are some relevant examples. 

In this paper, we study the classical problem of predicting an outcome variable, $Y$, using a linear combination of a $d$-dimensional covariate vector, $\mathbf{X}$. We focus on linear predictors whose coefficients, $\widehat{\boldsymbol{\beta}}$,  solve the problem:
\begin{equation}
\arg \inf_{\boldsymbol{\beta} \in \mathbb{R}^d} \Big( \mathbb{E}_{\mathbb{P}_n} \big[ \big|Y-\mathbf{X}^{\top}\boldsymbol{\beta} \big|^r \big] \Big)^{1/r} +\delta \, \rho(\boldsymbol{\beta}),
\label{eq:penalized}
\end{equation}
where $\delta>0$ is a regularization parameter, $\rho:\R^d\to \R_+$ is a convex penalty function, $\mathbb{P}_n$ is the empirical distribution of the data, and $r\geq 1$. We assume that both $\rho$ and $r$ have been determined by the statistician, and make no attempt to provide normative statements regarding their selection. The square-root LASSO (henceforth, $\sqrt{\text{LASSO}}$) \cite{belloni2011square}, the square-root group LASSO \cite{bunea2013group}, the square-root sorted $\ell_1$ penalized estimator (SLOPE) \cite{stucky2017sharp}, and the $\ell_1$-penalized least absolute deviation estimator \cite{wang2007robust} provide examples of estimators obtained by solving \eqref{eq:penalized}. 

We are interested in studying the out-of-sample prediction error associated to such estimators; namely 
\begin{equation} \label{eq:gen_intro}
\mathbb{E}_{\mathbb{Q}} \big[ \big|Y-\mathbf{X}^{\top}\widehat{\boldsymbol{\beta}} \big|^r \big].
\end{equation}
The expectation above is computed by fixing the estimated $\widehat{\boldsymbol{\beta}}$, and then drawing new covariates and outcomes according to some joint distribution $\mathbb{Q}$. The distribution $\mathbb{Q}$ is similar, but not necessarily equal to, the true data generating process, $\mathbb{P}$, or the empirical distribution of the data, $\mathbb{P}_n$. 

Our main result is the following upper bound on the out-of-sample prediction error (see \eqref{eq:generalization_Q} and Theorem \ref{cor:generalization} for a more formal statement). If $\delta$ is chosen appropriately, then, with high probability, the objective function in \eqref{eq:penalized} constitutes, up to some adjustment terms, an upper bound for the out-of-sample prediction error evaluated at any $\boldsymbol{\beta}$ and $\Q$:
\begin{align} 
\label{eq:gen_loss}
\E_{\Q} \big[\big|Y-\mathbf{X}^\top {\boldsymbol \beta}\big|^r\big]^{1/r} \le  \E_{{\P_n}} \big[\big|Y-\mathbf{X}^\top {\boldsymbol \beta}\big|^r\big]^{1/r} + \delta \big(1 + \rho\left({\boldsymbol \beta}\right) )+ \iW_r(\P,\Q)  (1 + \rho\left({\boldsymbol \beta}\right) ),
\end{align}
where $\iW_r$ denotes a type of \textit{max-sliced Wasserstein metric}. Consequently, linear predictors whose coefficients solve \eqref{eq:penalized}, for an appropriately chosen $\delta$, have good out-of-sample performance at the true, unknown distribution of the data $\P$, and, also, at \emph{testing} distributions $\Q$ that are close to $\P$ in terms of $\iW_r$. 

We present a formal definition of this metric in  \eqref{eq:Wass} below, and explain how distributions that are close in this metric are required to have similar prediction errors (in a sense we make precise).
The proof of the above is based on three intermediate results, which bring together ideas related to \emph{distributionally robust optimization} (DRO), finite sample analysis of the max-sliced Wasserstein metric, and empirical process theory. We believe that the three steps used to prove \eqref{eq:gen_loss} provide results that are interesting in their own right, and in what follows, we discuss each of these steps in more detail.

First, we show that estimators constructed using \eqref{eq:penalized} are equivalent to those that solve a  DRO problem based on a $\iW_r$-ball
 around $\mathbb{P}_n$ (Theorem \ref{thm:penalized}, Section \ref{sec:dro}). 
 The DRO representation naturally yields finite-sample bounds for \eqref{eq:gen_intro} in terms of \eqref{eq:penalized}, provided that distributions $\mathbb{Q}$ are close to $\mathbb{P}_n$ in terms of our suggested metric (Section \ref{sec:examples} provides examples of distributions contained in our balls). Thus, our first result provides theoretical support for the claim that predictors based on estimators obtained via \eqref{eq:penalized} (such as the $\sqrt{\text{LASSO}}$ and related estimators) have good out-of-sample performance.

Second, we provide a detailed statistical analysis of the balls of distributions based on our suggested metric. More precisely, we determine the required size of a ball centered on $\mathbb{P}_n$ to guarantee that it contains $\mathbb{P}$ with high probability. We present both finite-sample results (Theorem \ref{thm:1} and Theorem \ref{thm:rates} in Section \ref{sec:rates}) and large-sample approximations (Theorem \ref{thm:ass_bdd} and \ref{thm:ass} in Section \ref{sec:asymptotic}). Our analysis suggests that our balls are \emph{statistically larger} than those based on the standard Wasserstein metric (Remark \ref{rem:Wass_ball}). Because the balls we consider are statistically larger, their radii can shrink to zero faster than order $n^{-1/d}$ (the usual rates for Wasserstein balls), and still contain $\mathbb{P}$ (see Figure \ref{fig:1}). 

\definecolor{darkpastelgreen}{rgb}{0.01, 0.75, 0.24}
\definecolor{azure}{rgb}{0.0, 0.5, 1.0}

\begin{figure}[h!]
\begin{tikzpicture}[scale=.6]
    \draw[fill= darkpastelgreen] (-7,0) -- (0,7) -- (7,0)-- (0,-7)-- cycle;   
    \draw[fill=azure ] (0,0) circle [radius =3];
    \draw[fill= yellow ] (-3,0) -- (0,3) -- (3,0)-- (0,-3)-- cycle;   
    \draw[fill=blue] (0,0) circle [radius=0.1];
    \draw[fill=orange] (-3,0) circle [radius=0.1];
     \node[align=left, color = orange] at (-3.5,0) {$\P^*$};
    \node[below, color = blue] at (0,0) {$\hat{\P}_n$};
    \node[above, align=right, color=red] at (1.5, 1.8) {$\P$};
    \draw[color=red, fill=red] (1.5,1.8) circle [radius=0.1];
    \draw[dashed] (0,0) -- (-3.5, 3.5);
    \node[above, align=right] at (-0.4,1.1) {$n^{-1/d}$};
    \draw[dashed] (0,0) -- (-3, 0);
    \node[above, align=right] at (1.6,-1.1) {$n^{-1/2}$};
    \draw[dashed] (0,0) -- (2.12, -2.12);
     \node[above, align=right] at (-1.2,-0.9) {$n^{-1/2}$};
\end{tikzpicture}
\caption{$\rho$-max-sliced Wasserstein ball of radius $n^{-1/2}$ (blue) vs. $d$-dimensional Wasserstein ball of radius $n^{-1/2}$ (yellow) and $n^{-1/d}$ (green). The measure $\P^*$ (orange) is the optimal perturbation in the DRO formulation.}
\label{fig:1}
\end{figure}
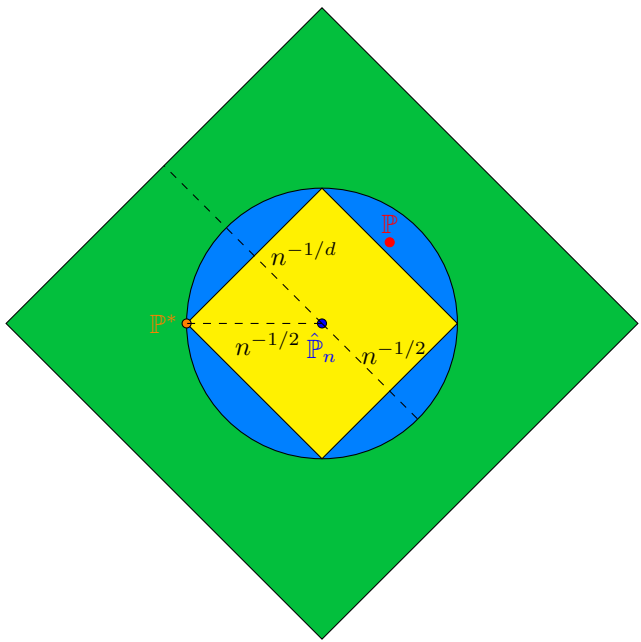

Third, we use the DRO representation of \eqref{eq:penalized} and the statistical analysis of our max-sliced Wasserstein balls to i) derive oracle recommendations for the penalization parameter $\delta$ (Section \ref{sec:recommendation}) that guarantee good out-of-sample prediction error (Theorem \ref{cor:generalization} in Section \ref{sec:recommendation_finite}); and ii) present a test statistic to rank the out-of-sample performance of two different linear estimators (Section \ref{sec:ranking_estimators}).  In Section \ref{sec:simulation} we present a small-scale simulation to illustrate the performance of predictions based on the $\sqrt{\text{LASSO}}$ but using our recommended parameter $\delta$.

None of our results rely on sparsity assumptions about the true data generating process; thus, they broaden the scope of use of the $\sqrt{\text{LASSO}}$ and related estimators in prediction problems.

We now provide an overview of the technical details of our main results. 

\subsection{Main Contributions}

\subsubsection{DRO formulation}

Our first result shows that linear predictors whose coefficients solve \eqref{eq:penalized} minimize the worst-case, out-of-sample prediction error attained over a ball of distributions centered around $\mathbb{P}_n$. This ball is defined by what we call the \emph{$\rho$-max-sliced Wasserstein ($\rho$-MSW) metric:}\footnote{Sliced Wasserstein distances \cite{rabin2011wasserstein, bonneel2015sliced}---i.e.,  distances between probability distributions that consider the average or maximum of standard Wasserstein distances between one-dimensional projections---have been the subject of recent research in statistics and machine learning; see, for example, \cite{kolouri2019generalized}, \cite{kengo12022statistical} and the references therein. As we discuss later in the paper, the max-sliced Wasserstein distance has been studied recently in \cite{niles2019estimation, lin2021projection,bartl2022structure}. Its use in the analysis of out-of-sample prediction error of the $\sqrt{\text{LASSO}}$ and related estimators yields a hitherto unexplored connection to the field of statistical optimal transport, which we hope to be attractive from a methodological point of view.}
\begin{equation} 
\iW_{r,\rho,\sigma}(\P,\tilde \P) :=\sup_{\boldsymbol{\gamma} \in \mathbb{R}^d} \Big(
 \inf_{\substack{\pi \in \Pi(\mathbb{P}, \tilde{\mathbb{P}}) }}
  \frac{1}{\sigma+\rho(\boldsymbol{\gamma})} \Big( \mathbb{E}_{\pi}\big[\big| (Y  - \mathbf{X}^{\top}  \boldsymbol\gamma) - (\tilde{Y}-\widetilde{\mathbf{X}}^{\top} \boldsymbol{\gamma} )  \big|^r \big] \Big)^{1/r} \Big).
\label{eq:Wass}
\end{equation}
Here, for arbitrary distributions $\mathbb{P}$ and $\widetilde{\mathbb{P}}$, the set $\Pi(\mathbb{P}, \widetilde{\mathbb{P}})$ denotes
the collection of probability distributions over random vectors $((\mathbf{X}^{\top}, Y), \, (\widetilde{\mathbf{X}}^{\top}, \widetilde{Y}))$, with marginal distributions $(\mathbb{P}, \widetilde{\mathbb{P}})$ (that is, the set $\Pi(\mathbb{P}, \widetilde{\mathbb{P}})$ is the collection of \emph{couplings} of $\mathbb{P}$ and $\widetilde{\mathbb{P}}$). We assume $r, \sigma\ge 1$, refer to $r$ as the Wasserstein exponent, and take $\sigma$ to be an auxiliary  hyperparameter.\footnote{For all the results in the paper, with the exception of Theorem \ref{thm:pivotal}, the hyperparameter $\sigma$ can be treated as an arbitrary positive constant (and in fact, without any loss of generality, it can be chosen to equal one). In Section \ref{sec:rates} we explain that introducing the hyperparameter $\sigma > 0$ is needed to allow for $\mathbb{P}$ and $\tilde{\mathbb{P}}$ to have different marginal distributions for the outcome variable. In the same section we argue that setting $\sigma \geq 1$ allows us to focus our statistical analysis on the usual MSW metric.} 

Intuitively, $\mathbb{P}$ and $\widetilde{\mathbb{P}}$ are close in the $\rho$-MSW metric, with Wasserstein exponent $r$, if for any $\boldsymbol \gamma$ there exists a coupling of $\mathbb{P}$ and $\widetilde{\mathbb{P}}$ that makes the $r$-th norm of the \emph{difference of their prediction errors} small, relative to $\rho(\boldsymbol \gamma)$.\footnote{Lemma \ref{lem:metric} shows that the $\rho$-MSW metric is indeed a metric.}

Formally, Theorem \ref{thm:penalized} in Section \ref{sec:dro} shows that $\widehat{\boldsymbol{\beta}}$ solves \eqref{eq:penalized} if and only if it solves the distributionally robust optimization problem
\begin{equation}
 \inf_{\boldsymbol{\beta} \in \mathbb{R}^d} \Big(  \sup_{\: \widetilde{\mathbb{P}} \,\in \, B^{r,\rho,\sigma}_{\delta}(\mathbb{P}_n)} \E_{\tilde\P}\big[ \big|Y-\mathbf{X}^\top \boldsymbol{\beta} \big|^r \big] \Big),
\label{eq:min2}
\end{equation}
where $B^{r,\rho,\sigma}_{\delta}(\P_n)$ is defined as the ball centered around the empirical distribution of the data, $\P_n$, collecting all the distributions $\tilde{\mathbb{P}}$ for which $\iW_{r,\rho,\sigma}(\P_n, \tilde{\P})$
is smaller than $\delta$. By construction, the minimax problem \eqref{eq:min2} provides robustness in situations where i) the trained procedure will be evaluated on test data from a distribution $\widetilde{\mathbb{P}}$ that is close to that of the training data, $\mathbb{P}$, but may be different \cite{ben2010theory}; ii) where there are covariate shifts \cite{shimodaira2000improving, quinonero2008dataset, sugiyama2007covariate, sugiyama2005input, agarwal2011linear, wen2014robust, reddi2015doubly, chen2016robust}; or iii) when there is an adversarial attack \cite{kurakin2016adversarial, goodfellow2014explaining}.

It is useful to compare the $\rho$-MSW metric to the $d$-dimensional Wasserstein metric with cost $\|\cdot\|$, defined by
\begin{equation}
\mathcal{W}_r(\P,\tilde\P) := \inf_{\substack{\pi  \, \in \, \Pi(\mathbb{P}, \widetilde{\mathbb{P}})}}
 \Big( \mathbb{E}_{\pi}\big[ \|(\widetilde{\mathbf{X}}^\top,\tilde{Y})-(\mathbf{X}^\top,Y)\|^r \big] \Big)^{1/r},
 \label{eq:Wass2}
 \end{equation}
 where $\|\cdot\|$ is an arbitrary metric in $\mathbb{R}^{d+1}$. Remark \ref{rem:Wass_ball} in Section \ref{sec:dro} shows that for a large class of penalty functions $\rho$, the balls based on \eqref{eq:Wass} will typically be larger than those based on \eqref{eq:Wass2}. 

It is also useful to note that our $\rho$-MSW metric is a slight generalization of the \emph{max-sliced Wasserstein metric} (MSW), first considered in \cite{deshpande2019max,kolouri2019generalized,paty2019subspace,niles2019estimation}. Broadly speaking, the MSW distance over probability distributions $\mathbb{P}$ and $\tilde{\mathbb{P}}$ on $\mathbb{R}^{d+1}$ is defined as 
\begin{equation} \label{eqn:MSW_intro}
\overline{\mathcal{W}}_{r}(\mathbb{P},\tilde{\mathbb{P}}) := \sup_{ \tilde{\boldsymbol{\gamma}} \in \mathbb{R}^{d+1} : \|\tilde{\boldsymbol{\gamma}}\|_{2} = 1 }   \mathcal{W}_r( \tilde{\boldsymbol{\gamma}}_{*} \mathbb{P} ~, \tilde{\boldsymbol{\gamma}}_{*} \tilde{\mathbb{P}} )~,
\end{equation}
where  $\mathcal{W}_r$ is the one-dimensional Wasserstein metric, and $\tilde{\boldsymbol{\gamma}}_{*} \P$ denotes the pushforward probability of $\mathbb{P}$ with respect to the linear map $\mathbf{z} \in \mathbb{R}^{d+1} \mapsto \mathbf{z}^{\top} \tilde{\boldsymbol{\gamma}} \in \mathbb{R} $. The supremum is defined over all linear, one-dimensional projections generated by the vectors in the unit sphere. We provide further details about the MSW metric in the discussion following Equation \eqref{eq:bar_W}.

To further illustrate the similarities between the MSW and our $\rho$-MSW metric, it is convenient to assume, for the moment, that $\rho(\cdot)$ is a norm on $\mathbb{R}^d$. For an arbitrary scalar $\sigma > 0$, define the function  $\|\cdot\|_{\rho,\sigma}$ on $\mathbb{R}^{d+1}$ via $\|\tilde{\boldsymbol{\gamma}}\|_{\rho,\sigma} = \sigma |y| + \rho(\boldsymbol{\gamma})$, where $\tilde{\boldsymbol{\gamma}} = (\boldsymbol{\gamma},y)$, $\boldsymbol{\gamma} \in \mathbb{R}^d$ and $y \in \mathbb{R}$. Note that $\|\tilde{\boldsymbol{\gamma}}\|_{\rho,\sigma}$ is a norm. Then, Lemma \ref{lem:metric_new} in the Supplementary Material \cite{MRVW2024} shows that our $\rho$-MSW metric can be written as
            $$ \widehat{W}_{r,\rho,\sigma}(\mathbb{P},\tilde{\mathbb{P}}) = \sup_{ \tilde{\boldsymbol{\gamma}} \in \mathbb{R}^{d+1} : \|\tilde{\boldsymbol{\gamma}}\|_{\rho,\sigma} = 1 }   \mathcal{W}_r( \tilde{\boldsymbol{\gamma}}_{*} \mathbb{P} ~, \tilde{\boldsymbol{\gamma}}_{*} \tilde{\mathbb{P}} )~.$$
Thus, when $\rho(\cdot)$ is a norm, the only difference between our $\rho$-MSW metric and the usual MSW metric is the set of one-dimensional projections that are used to define each metric. For example, when $\rho(\cdot) = \| \cdot \|_1$ and $\sigma = 1$, the $\rho$-MSW metric considers the supremum on the unit sphere defined by the $\ell_1$-norm, while the MSW metric considers the supremum on the unit sphere defined by the $\ell_2$-norm. In both cases, restricting the norm of the one-dimensional projections is needed to guarantee that these metrics are finite. Thus, one can think of $\rho(\cdot)$ as normalizing the linear, one-dimensional projections (and consequently, balancing out the numerator in Equation \eqref{eq:Wass}).

To the best of our knowledge, the result showing that linear predictors whose coefficients solve \eqref{eq:penalized} equivalently minimize the worst-case, out-of-sample prediction error attained over a neighborhood $B_\delta^{r,\rho, \sigma}(\P_n)$ based on the $\rho$-MSW metric is new. The connection between \eqref{eq:penalized} and \eqref{eq:min2} for the case $r=2$, penalty $\rho(\cdot)=\|\cdot\|_p$, $p\ge 1$, and $\iW_r$ replaced by $\mathcal{W}_r$, the Wasserstein metric, was first established in \cite[Theorem 1]{blanchet2019robust}, using optimal transport (OT) duality \cite{gao2023distributionally, blanchet2019quantifying}. These results have recently been extended to more general penalty functions \cite{Chuetal,wu2022generalization}. Our balls are different to the ones considered in these papers: we focus on convex penalty functions, and also our proofs do not rely on duality arguments. Instead, we explicitly identify a worst-case measure $\P^*\in B_\delta^{r,\rho,\sigma}(\P_n)$ for \eqref{eq:min2}. Our results show that the measure $\P^*$ is given by an additive perturbation of $\P_n$ (see Corollary \ref{cor:1}). In this sense, our proof can be seen as a natural extension of the seminal results in \cite[Theorem 1]{bertsimas2018characterization}. Moreover, we believe that $\iW_{r,\rho,\sigma}$ is the natural metric to assess out-of-sample performance, as it allows for the construction of neighborhoods containing a general class of \emph{testing} distributions that are only required to generate similar prediction errors as the \emph{training} distribution $\mathbb{P}_n$. To see this, note that the out-of-sample prediction error, $\mathbb{Q} \mapsto \mathbb{E}_\mathbb{Q}[| \Tilde{Y} - \Tilde{\mathbf{X}}^{\top} \boldsymbol{\boldsymbol{\beta}} |^r  ]^{1/r}$, is a Lipschitz continuous function under the $\rho$-MSW metric for any given $\boldsymbol{\beta}$ and penalty function $\rho$. Specifically, for any two distributions $\mathbb{Q}$ and $\mathbb{P}$, the definition of $\rho$-MSW implies
$$ \mid \mathbb{E}_\mathbb{Q}[ \mid \Tilde{Y} - \Tilde{\mathbf{X}}^{\top} \boldsymbol{\boldsymbol{\beta}} \mid^r ]^{1/r} - \mathbb{E}_\mathbb{P}[ \mid Y - \mathbf{X}^{\top} \boldsymbol{\beta} \mid^r ]^{1/r} \mid \le \left(\sigma + \rho(\boldsymbol{\boldsymbol{\beta}}) \right) \widehat{\mathcal{W}}_{r, \rho, \sigma}(\mathbb{P},\mathbb{Q}).$$
Consequently, $\sigma + \rho(\boldsymbol{\beta})$ can be interpreted as a Lipschitz constant that varies with $\boldsymbol{\beta}$. Thus, for fixed $\boldsymbol{\beta}$, any two distributions that are close under the $\rho$-MSW metric have similar prediction errors. The standard $d$-dimensional Wasserstein metric puts additional restrictions on the testing distributions considered. This means that two distributions can have similar prediction errors, but their Wasserstein distance could be very large, especially in high dimensions (making the associated bounds not very useful in practice). In Section \ref{subsection:gaussian_example} of the Supplementary Material \cite{MRVW2024} we further provide an example of two Gaussian distributions for which the difference in prediction errors is small, but the the standard Wasserstein metric is large.

\subsubsection{Statistical Analysis of $\iW_{r,\rho,\sigma}(\P_n, \P)$}

Our second set of results provide a detailed analysis of the statistical properties of \eqref{eq:Wass}. For simplicity in the exposition, we focus on the case when $\rho$ satisfies the inequality
\begin{align}\label{eq:new}
c_d \|(\boldsymbol{\gamma}, -1)\|\le \rho(\boldsymbol{\gamma})+1,
\end{align}
for a constant $c_d>0$ and a norm  $\|\cdot\|$ on $\R^{d+1}$. We also define its dual norm $\|\cdot\|_*= \sup_{\|\mathbf{x}\|=1} \langle \mathbf{x}, \mathbf{y}\rangle$; e.g.\ for $\|\cdot\|= \|\cdot\|_p$ for some $p\ge 1$ we have $\|\cdot\|_*=\|\cdot\|_q$, where $q=p/(p-1)$. Theorem \ref{thm:rates} in Section \ref{sec:rates} shows that if
\begin{equation*}
\Gamma:= \E_{\P}\big[   \|(\mathbf{X}^\top,Y ) \|_*^{s}\big]<\infty, \qquad \text{for some }s>2r,
\end{equation*}
then with a probability greater than $1-\alpha$ we have that 
\begin{equation}
\label{eq:W_bound_q_2_intro}
\iW_{r,\rho,\sigma}(\mathbb{P}_n, \P)^r \le \left( \max \left\{\frac{1}{c_d}, \, 1 \right\} \right)^r \frac{C\,\log(2n+1)^{r/s}}{\sqrt{n}} ~,
\end{equation}
where $C$ is the constant in \eqref{eq:constant_thm3} and is a function of the parameters $(r, d, \alpha, \Gamma, s)$. Furthermore, Theorem \ref{thm:ass} in Section \ref{sec:asymptotic} shows there exists a constant $C :=C(r,s,d)$, such that for all $x\ge 0$,
\begin{align}
\label{eq:W_bound_q_2_intro2}
\limsup_{n\to \infty} \, \P\left( \sqrt{n} \,  \iW_{r,\rho,\sigma}\left(\P_{n},\P\right)^r \ge x \right) \le  \P\left(\, \sup_{f \in \mathcal{F}} |G_f|\ge \frac{x}{C\sqrt{\Gamma} } \,\right),
\end{align}
where $(G_f)_{f\in \mathcal{F}}$ is a zero-mean Gaussian process specified in Theorem \ref{thm:ass}. 

The proofs of these results are based on a novel connection between an upper bound for the Wasserstein distance in $d=1$, and classical bounds from empirical process theory for self-normalized processes. Its relative simplicity enables us to find the explicit constants above.

\subsubsection{Applications}

\emph{Choosing $\delta$:} Our statistical analysis provides a concrete \emph{oracle} recommendation to select the regularization parameter $\delta_{n,r}$ in \eqref{eq:penalized} to be equal to the $(1/r)$-$th$ power of the right-hand side of \eqref{eq:W_bound_q_2_intro}; see Section \ref{sec:recommendation_finite}. Of course, the oracle recommendation for $\delta_{n,r}$ is typically not feasible as it depends on the unknown parameter $\Gamma$. In Section \ref{sec:normalization}, we present a simple strategy to normalize the sample covariates that guarantees that both \eqref{eq:W_bound_q_2_intro} and \eqref{eq:generalization_Q} hold with $\Gamma=2^s$ and $\sigma = \max\{ \E_{\P}[|Y|^s]^{1/s}, 1\}$. This means that we can turn our oracle recommendation into a simple formula that only depends on the true and unknown distribution of the data through the $s$-th moment of the outcome (which is typically easy to estimate), while also guaranteeing robustness to perturbations of the test dataset distribution from that of the training data in the form of \eqref{eq:generalization_Q} below. This \emph{pivotality} was the original motivation for the use of the $\sqrt{\text{LASSO}}$ and related estimators. %
The theoretical results in this paper (as well as the numerical simulations reported in Section \ref{sec:additional_sims} of the Supplementary Material \cite{MRVW2024}) suggest that, beyond pivotality, there are benefits---in terms of out-of-sample performance under a variety of testing distributions--- of using the $\sqrt{\textrm{LASSO}}$ and related estimators. 

We also note that our recommendation for the selection of regularization parameter does not rely on any sparsity assumption. We think this is an important point, as recent work \cite{Domenico2021,Ulrich2021} has argued that sparsity might not always be a compelling starting point in applications. 

Finally, while we are able to provide a recommendation for $\delta$, our current results do not allow us to say anything concrete about the selection of penalty function, $\rho$. This is in part due to the fact that, in our framework, we have too much flexibility making this choice. To illustrate this point, suppose that we wanted to pick the penalty function to optimize the out-of-sample prediction error of a linear predictor based on \eqref{eq:penalized} at a known distribution $\mathbb{Q}$. If $n$ denotes the sample size, we could always pick the convex penalty function
$$\rho^n(\boldsymbol{\beta}): = n \left( \mathrm{E}_{\mathbb{Q}}\left[\left| Y- \mathbf{X}^{\top} \boldsymbol{\beta} \right|^r \right] \right) ^{1/r}.$$
As $n$ grows to infinity, the relevance of the penalty function increases, and thus the solution of \eqref{eq:penalized} converges to
$$ \arg \inf_{\boldsymbol{\beta} \in \mathbb{R}^d}  \left( \mathbb{E}_{\Q} \left[ \left|Y-\mathbf{X}^{\top}\boldsymbol{\beta} \right|^r \right] \right)^{1/r}.$$
This just formalizes the obvious point that if we know the testing distribution $\mathbb{Q}$ at which we would like to have good performance, then it is better to use the best predictor based on such a distribution.

\emph{Bounds on out-of-sample performance:} When the available sample consists of independent and identical (i.i.d.) draws from a distribution $\mathbb{P}$ for which $\Gamma < \infty$ we can show that the objective function in \eqref{eq:penalized} provides explicit bounds on the out-of-sample performance of any linear estimator. In particular, Theorem \ref{cor:generalization} in Section \ref{sec:recommendation_finite} shows that for any \emph{testing} distribution $\mathbb{Q}$ for which $\iW_{r,\rho,\sigma}(\mathbb{P}, \mathbb{Q}) \leq \epsilon $, we have  with probability at least $1-\alpha$,
\begin{equation} \label{eq:generalization_Q}
\E_{\Q} \big[|Y-\mathbf{X}^\top {\boldsymbol \beta}|^r\big]^{1/r} \le  \E_{{\P_n}} \big[|Y-\mathbf{X}^\top {\boldsymbol \beta}|^r\big]^{1/r} + (\delta_{n,r} + \epsilon ) (\sigma + \rho({\boldsymbol \beta}) ),~ \quad \forall {\boldsymbol \beta}.
\end{equation}

\emph{Ranking the out-of-sample performance of competing estimators.} Finally, we present a test statistic to rank the out-of-sample performance of two different linear estimators (Section \ref{sec:ranking_estimators}). 

\subsection{Related Literature} 
The distributionally robust optimization problem in \eqref{eq:min2} has been shown to be equivalent to various forms of penalized regression, variance-penalized estimation, and dropout training  \cite{mohajerin2018data, gao2023distributionally, lam2016robust, blanchet2020machine,  blanchet2019quantifying, duchi2021statistics, nguyen2021mean}, depending on the choice of uncertainty set. It is typical to define uncertainty sets using metrics or divergences: e.g., total variation, Hellinger, Gelbrich distance \cite{nguyen2021mean} or Kullback-Leibler divergence \cite{renyi1961measures,christensen2023counterfactual}. To the best of our knowledge, the use of the max-sliced Wasserstein metric to define uncertainty sets in DRO problems is novel. 

As we have discussed above, the equivalence between \eqref{eq:penalized} and \eqref{eq:min2} has been established in \cite[Theorem 1]{blanchet2019robust} using a ball in the Wasserstein metric, which is a common choice for the uncertainty set in the distributionally robust optimization literature \cite{kuhn2019wasserstein, blanchet2019data, sinha2017certifying, lee2018minimax, gao2023distributionally, mohajerin2018data, gao2022wasserstein, shafieezadeh2015distributionally}. Relative to previous results, we focus on convex penalty functions (and not only norms), and also we explicitly identify a worst-case measure $\P^*\in B_\delta^{r,\rho,\sigma}(\P_n)$ for \eqref{eq:min2}, instead of relying on duality arguments. In this sense, our proof can be seen as a natural extension of \cite[Theorem 1]{bertsimas2018characterization}. 

DRO representations similar to \eqref{eq:min2} are known to be useful in many situations, for example, those where the trained procedure will be evaluated on test data from a distribution $\widetilde{\mathbb{P}}$ that is close to that of the training data, $\mathbb{P}$, but may be different \cite{ben2010theory}, when there are covariate shifts \cite{shimodaira2000improving, quinonero2008dataset, sugiyama2007covariate, sugiyama2005input, agarwal2011linear, wen2014robust, reddi2015doubly, chen2016robust}, or when one experiences adversarial attacks \cite{kurakin2016adversarial, goodfellow2014explaining}. As discussed in the seminal work of \cite{bertsimas2018characterization}, DRO representations similar to \eqref{eq:min2} \emph{``offer a different perspective on regularization methods by identifying which adversarial perturbations the model is protected against''}. This \emph{a fortiori} means that in any helpful representation similar to \eqref{eq:min2}, the set of adversarial distributions for which a regularization method protects against must depend on the regularizer itself. Thus, it should not be surprising that the max-sliced Wasserstein metric introduced in this paper depends on $\rho$. And in fact, previous uses of the Wasserstein metric for DRO representations of the $\sqrt{\text{LASSO}}$ and related estimators also depend on $\rho$; c.f.\ Proposition 2 in \cite{blanchet2019robust}.

Starting from \cite{dereich2013constructive, fournier2015rate}, the question of establishing finite sample bounds on the Wasserstein metric and its variants has seen a spike in research activity over the last years: an incomplete list is \cite{boissard2014mean, singh2018minimax, weed2019sharp, niles2019estimation, lei2020convergence, chizat2020faster}; see also the references therein. When $d>2r$, tight rates for $\mathcal{W}_r(\P_n, \P)$ are of the order $n^{-1/(rd)}$ , i.e.\ they suffer from the curse of dimensionality. As our results show, this is not the case for the $\rho$-MSW distance. 
The faster rates of convergence for the max-sliced Wasserstein metric were first observed in \cite{niles2019estimation} for subgaussian probability measures and in \cite{lin2021projection} under a projective Poincar\'e/Bernstein inequality. More recently, \cite{bartl2022structure} have obtained sharp rates for $r=2$ and isotropic distributions. Our rates are of the same order, up to logarithmic factors, and simultaneously hold for all $r\ge 1$ and all distributions with finite higher-order moments. Lastly, let us mention that most of the papers cited above only give explicit \textit{rates}, while the \textit{constants} are often non-explicit and large, cf.\ \cite{fournier2022convergence}. A notable exception is the recent work of \cite{kengo12022statistical} and \cite{kengo22022statistical}. In particular, using log-concavity,  \cite{kengo12022statistical} derives sharp rates for the max-sliced Wasserstein metric that explicitly state the dependence on the dimension of the data. In Section \ref{sec:recommendation} we further discuss how these results can be used to provide a recommendation for $\delta$ based on our DRO representation.

A large part of the theoretical literature studying penalized regressions as in \eqref{eq:penalized} has explicit recommendations for the choice of the penalization parameters. 
For the case of the LASSO estimator, \cite{chetverikov2021cross} present conditions such as the widely used cross-validation method has nearly optimal rates of convergence in prediction norms, and \cite{chernozhukov2023high} suggest utilizing a bootstrap approximation to estimate the penalization parameter. For the case of the $\sqrt{\text{LASSO}}$, \cite{belloni2011square,belloni2014pivotal} proposed a pivotal penalization parameter with asymptotic guarantees. Our work complements these previous results by recommending a  penalization parameter that explicitly controls the out-of-sample prediction error (for a finite sample and/or asymptotically).

\subsection{Outline} 
The rest of the paper is organized as follows. In Section \ref{sec:dro}, we present a detailed discussion of the equivalence of \eqref{eq:penalized} and \eqref{eq:min2}. In Sections \ref{sec:rates} and \ref{sec:asymptotic} we present rates for the MSW distance $\iW_{r,\rho,\sigma}$ between the true and empirical measure, both for $\P$ with compact support and for $\P$ satisfying $\Gamma<\infty$. Section \ref{sec:rates} gives a finite sample analysis, while Section \ref{sec:asymptotic} provides asymptotics.
In Section~\ref{sec:recommendation}, we present a recommendation for the selection of regularization parameter, $\delta_{n,r}$, that guarantees good out-of-sample prediction error. We also present a test statistic to rank the out-of-sample performance of two different linear estimators.  In Section \ref{sec:simulation}, we present a small-scale simulation to illustrate the performance of predictions based on the $\sqrt{\text{LASSO}}$ but using our recommended parameter $\delta$. All the proofs are collected in the Supplementary Material \cite{MRVW2024}.

\subsection{Notation} \label{sec:notation}

\quad \emph{Random Variables.} We use capital, bold letters---such as $\mathbf{Z}$ and $\widetilde{\mathbf{Z}}$---to denote Borel measurable random vectors in $\mathbb{R}^{d}$, and use $Z_j$ to denote the $j$-th coordinate of $\mathbf{Z}$. We denote the set of all Borel probability measures in $\mathbb{R}^{d}$ by  $\mathcal{P}(\mathbb{R}^{d})$ and let $\mathcal{P}_r(\mathbb{R}^{d}) \subset   \mathcal{P}(\mathbb{R}^{d})$ denote all Borel probability measures with finite $r$th moments. If the random vector $\mathbf{Z}$ has distribution or law $\mathbb{P} \in \mathcal{P}(\mathbb{R}^{d})$, we write $\mathbf{Z} \sim \mathbb{P}$. The expectation of $\mathbf{Z}$ is denoted as $\mathbb{E}_{\mathbb{P}}[\mathbf{Z}]$. 

\quad \emph{Covariates and outcome variables.} We reserve $\mathbf{X}$ for the random column vector collecting the $d$ covariates available for prediction, and $Y$ for the scalar outcome variable. The realizations of covariates and outcomes are denoted as $\mathbf{x}$ and $y$, respectively. In a slight abuse of notation, we sometimes write $(\mathbf{X},Y)$ to denote a random vector in $\mathbb{R}^{d+1}$ (instead of $(\mathbf{X}^{\top},Y)^{\top}$).  

\emph{Couplings.} For two probability measures $\mathbb{Q}$ and $\mathbb{P}$, we define a \emph{coupling} of $\mathbb{Q}$ and $\mathbb{P}$ as any element of $\mathcal{P}(\mathbb{R}^{d} \times \mathbb{R}^{d})$ that preserves the marginals over $\mathbb{R}^{d}$. We denote the collection of all such couplings as $\Pi(\mathbb{Q},\mathbb{P})$. By definition, if $(\widetilde{\mathbf{Z}}, \mathbf{Z})$ is an $\mathbb{R}^{d} \times \mathbb{R}^{d}$-valued random vector with distribution $\pi \in \Pi(\mathbb{Q},\mathbb{P})$, then $\widetilde{\mathbf{Z}} \sim \mathbb{Q}$ and $\mathbf{Z} \sim \mathbb{P}$.

\emph{Penalty functions.} 
For a function $\rho:\mathbb{R}^{d} \rightarrow \R$ we write 
\begin{equation*}
    \rho^* \left(\boldsymbol{\beta} \right) := \sup_{\mathbf{x} \in \mathbb{R}^d} \left\{\boldsymbol{\beta}^\top \mathbf{x} - \rho(\mathbf{x}) \right\},
\end{equation*}
for its conjugate (see \cite{rockafellar2015convex}).
If $\rho$ is convex, a vector $\boldsymbol{\beta}^*$ is said to be a subgradient of $\rho$ at a point $\boldsymbol{\beta}$ if:
\[ \rho(\mathbf{x}) \geq \rho(\boldsymbol{\beta}) + {\boldsymbol{\beta}^*}^{\top}\left(\mathbf{x}-\boldsymbol{\beta} \right), \: \, \qquad \forall \, \mathbf{x}\in \R^d.  \] 
The set of all subgradients of $\rho$ at $\boldsymbol{\beta}$ is called the subdifferential of $\rho$ at $\boldsymbol{\beta}$ and is denoted by  $\partial \rho (\boldsymbol{\beta})$, (\cite{rockafellar2015convex}; pp.\ 214-215). 

Lastly, let us mention two important facts that will be relevant in Section \ref{sec:examples}. If $\rho$ is differentiable, then its subdifferential $\partial \rho (\boldsymbol{\beta})$ is a singleton that contains the gradient of $\rho$ at $\boldsymbol{\beta}$;  see, for example, (\cite{rockafellar2015convex}; Theorem 25.1). If $\rho$ is a norm in $\mathbb{R}^{d}$, then $\rho^*$ is only equal to zero or infinity; see (\cite{Boyd:2004}; p.\ 93).

\section{Reformulation as a DRO problem}\label{sec:dro}

For any $r,\sigma \in [1,\infty)$, and $\rho\colon \mathbb{R}^{d} \rightarrow [0,+\infty)$ define the collection of distributions
\begin{equation}
\begin{split}
\label{eq:Q_ball}
B_\delta^{r,\rho,\sigma}(\P)&:= \left\{ \Q \in \mathcal{P}_r (\mathbb{R}^{d+1}):  \iW_{r,\rho,\sigma} (\Q,\P)\le \delta \right\}\\
&= \Big \{ \Q \in \mathcal{P}_r (\mathbb{R}^{d+1}) : \:  \forall \, \boldsymbol{\gamma} \in \mathbb{R}^{d},  \quad \exists \textrm{ a coupling } \pi(\boldsymbol{\gamma}) \in \Pi(\P, \Q)  \\  
&\qquad  \textrm{ for which } \: \mathbb{E}_{\pi(\boldsymbol \gamma)}\big[|(\tilde{Y}-Y)+ ( \mathbf{X}-\widetilde{\mathbf{X}})^{\top} \boldsymbol{\gamma} |^r \big]   \leq \delta^r(\sigma+\rho(\boldsymbol{\gamma}))^r, \\
& \qquad \textrm{ where } \big((\mathbf{X}, Y),\,(\widetilde{\mathbf{X}}, \widetilde{Y})\big) \sim \pi(\boldsymbol \gamma) \Big\}.
\end{split}
\end{equation}

As explained in the Introduction, a distribution $\mathbb{Q}$ belongs to the ball in  \eqref{eq:Q_ball} if and only if for any $\boldsymbol \gamma$ there exists a coupling of $\mathbb{P}$ and $\mathbb{Q}$ that makes the $r$-th norm of their prediction errors small, relative to $\rho(\boldsymbol \gamma)$. We remark that the infimum in the definition of $\iW_{r,\rho,\sigma}(\Q,\P)$ given in \eqref{eq:Wass} is attained for fixed $\boldsymbol{\gamma}$.\footnote{Indeed, note that the function $((\mathbf{x},y),(\tilde{ \mathbf{x}},\tilde y))\mapsto |(\tilde y-y)+(\tilde {\mathbf{x}}- \mathbf{x})^\top \boldsymbol{\gamma}|^r$ is continuous and non-negative. The result then follows from \cite[Theorem 4.1]{villani2008optimal}.} Furthermore, for any norm $\rho$, the supremum over $\boldsymbol{\gamma}$ in equation \eqref{eq:Wass} is also attained. For notational simplicity we will suppress the dependence of the ball $B_\delta^{r,\rho,\sigma}(\P)$ on $r,\rho,\sigma$ and write $B_\delta (\P)$ instead.

The main result of this section establishes a formal connection 
between the solutions to the problems in \eqref{eq:penalized} and \eqref{eq:min2}.

\begin{thm} 
\label{thm:penalized}
Fix $1 \leq r < \infty$ and $1 \leq \sigma < \infty$. Let $\rho\colon \mathbb{R}^{d} \rightarrow [0,+\infty)$ be a convex penalty function. Suppose that, for any $\boldsymbol{\beta} \in \mathbb{R}^{d}$, there exists a subgradient $\boldsymbol{\beta}^* \in \partial \rho (\boldsymbol{\beta})$ such that 
\begin{equation} \label{equation:condition}
    \left| \boldsymbol{\gamma}^{\top} \left( {\boldsymbol{\beta}^*} - \frac{\boldsymbol{\beta}}{\boldsymbol{\beta}^{\top} \boldsymbol{\beta}} \, \rho^*\big(\boldsymbol{\beta}^*\big) \right) \right | \leq \rho(\boldsymbol{\gamma}), \qquad \forall \: \boldsymbol{\gamma} \in \mathbb{R}^{d}.
\end{equation}
Then, for any  $\delta \geq 0$ and any $\boldsymbol{\beta} \in \mathbb{R}^{d}$ we have
\begin{align}\label{eq:duality}
\sup_{\widetilde{\mathbb{P}} \in B_{\delta}(\mathbb{P})} \mathbb{E}_{\widetilde{\mathbb{P}}} \left[ \left|Y-\mathbf{X}^{\top} \boldsymbol{\beta} \right|^r \right] = \left(\sqrt[r]{\mathbb{E}_{\mathbb{P}}\left[ \left|Y-\mathbf{X}^{\top} \boldsymbol{\beta}\right|^r \right]} + \, \delta \left(\sigma+\rho(\boldsymbol{\beta})\right) \right)^r.
\end{align}
\end{thm}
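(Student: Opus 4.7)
The plan is to prove the two inequalities in \eqref{eq:duality} separately, with the lower bound carried out by an explicit construction of a worst-case measure.

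For the easy direction ($\le$), I would fix any $\widetilde{\mathbb{P}} \in B_\delta(\mathbb{P})$ and apply the definition of the ball at the specific choice $\boldsymbol{\gamma} = \boldsymbol{\beta}$. This produces a coupling $\pi(\boldsymbol{\beta}) \in \Pi(\mathbb{P}, \widetilde{\mathbb{P}})$ with $\mathbb{E}_{\pi(\boldsymbol{\beta})}[|(\tilde Y - Y) + (\mathbf{X} - \widetilde{\mathbf{X}})^\top \boldsymbol{\beta}|^r]^{1/r} \le \delta(\sigma + \rho(\boldsymbol{\beta}))$. Writing $\tilde Y - \widetilde{\mathbf{X}}^\top \boldsymbol{\beta} = (Y - \mathbf{X}^\top \boldsymbol{\beta}) + ((\tilde Y - Y) + (\mathbf{X} - \widetilde{\mathbf{X}})^\top \boldsymbol{\beta})$ and applying Minkowski's inequality in $L^r(\pi(\boldsymbol{\beta}))$ yields $\mathbb{E}_{\widetilde{\mathbb{P}}}[|Y - \mathbf{X}^\top \boldsymbol{\beta}|^r]^{1/r} \le \mathbb{E}_{\mathbb{P}}[|Y - \mathbf{X}^\top \boldsymbol{\beta}|^r]^{1/r} + \delta(\sigma + \rho(\boldsymbol{\beta}))$. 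Raising to the $r$-th power and taking the supremum over $\widetilde{\mathbb{P}} \in B_\delta(\mathbb{P})$ gives the $\le$ direction.

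For the matching lower bound I would explicitly construct a measure $\mathbb{P}^* \in B_\delta(\mathbb{P})$ attaining equality. Write $U := Y - \mathbf{X}^\top \boldsymbol{\beta}$, assume first that $\|U\|_r := \mathbb{E}_{\mathbb{P}}[|U|^r]^{1/r} > 0$, and set $\mathbf{w} := \boldsymbol{\beta}^* - (\rho^*(\boldsymbol{\beta}^*)/(\boldsymbol{\beta}^\top \boldsymbol{\beta}))\,\boldsymbol{\beta}$. This $\mathbf{w}$ is engineered so that (i) condition \eqref{equation:condition} is exactly the statement $|\boldsymbol{\gamma}^\top \mathbf{w}| \le \rho(\boldsymbol{\gamma})$ for all $\boldsymbol{\gamma}$, and (ii) the Fenchel--Young equality $\boldsymbol{\beta}^{*\top}\boldsymbol{\beta} = \rho(\boldsymbol{\beta}) + \rho^*(\boldsymbol{\beta}^*)$, valid because $\boldsymbol{\beta}^* \in \partial\rho(\boldsymbol{\beta})$, gives the \emph{attainment} identity $\boldsymbol{\beta}^\top \mathbf{w} = \rho(\boldsymbol{\beta})$. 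Define $\mathbb{P}^*$ as the pushforward of $\mathbb{P}$ under the map $T(\mathbf{x}, y) := (\mathbf{x} - \delta\, u(\mathbf{x},y)\, \mathbf{w}/\|U\|_r, \, y + \delta \sigma\, u(\mathbf{x},y)/\|U\|_r)$, where $u(\mathbf{x},y) := y - \mathbf{x}^\top \boldsymbol{\beta}$, and take the natural coupling $((\mathbf{X}, Y), T(\mathbf{X}, Y))$. Under this coupling, $(\tilde Y - Y) + (\mathbf{X} - \widetilde{\mathbf{X}})^\top \boldsymbol{\gamma} = \delta U(\sigma + \mathbf{w}^\top \boldsymbol{\gamma})/\|U\|_r$, whose $L^r$-norm is $\delta |\sigma + \mathbf{w}^\top \boldsymbol{\gamma}| \le \delta(\sigma + \rho(\boldsymbol{\gamma}))$ by (i), so $\mathbb{P}^* \in B_\delta(\mathbb{P})$. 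Specializing to $\boldsymbol{\gamma} = \boldsymbol{\beta}$ and using (ii) gives $\tilde Y - \widetilde{\mathbf{X}}^\top \boldsymbol{\beta} = U\,(1 + \delta(\sigma + \rho(\boldsymbol{\beta}))/\|U\|_r)$, and taking $L^r$-norms yields the required equality.

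The main obstacle is pinning down the correct perturbation direction $\mathbf{w}$: one needs a single linear functional $\boldsymbol{\gamma} \mapsto \mathbf{w}^\top \boldsymbol{\gamma}$ that is majorized by $\rho(\cdot)$ for all $\boldsymbol{\gamma}$ (so membership in the ball is simultaneous across all test directions) yet equals $\rho(\boldsymbol{\beta})$ at the target $\boldsymbol{\gamma} = \boldsymbol{\beta}$ (so Minkowski attains equality). Condition \eqref{equation:condition} is precisely what guarantees such a $\mathbf{w}$ exists for a general convex $\rho$; when $\rho$ is a norm, the correction term drops out because any subgradient $\boldsymbol{\beta}^*$ satisfies $\rho^*(\boldsymbol{\beta}^*) = 0$, and $\mathbf{w} = \boldsymbol{\beta}^*$ works directly. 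The degenerate cases $\|U\|_r = 0$ and $\boldsymbol{\beta} = 0$ are handled separately: in the first, any unit-norm $L^r$ random variable can replace $U/\|U\|_r$; in the second, one takes $\boldsymbol{\beta}^* = 0 \in \partial\rho(0)$ so that $\mathbf{w} = 0$ and the perturbation affects only $Y$.
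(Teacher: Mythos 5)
Your proof is correct and takes essentially the same route as the paper: the upper bound via the coupling chosen at $\boldsymbol{\gamma}=\boldsymbol{\beta}$ plus Minkowski's inequality, and the matching lower bound via exactly the same additive worst-case perturbation $\widetilde{\mathbf{X}}=\mathbf{X}-e\bigl(\boldsymbol{\beta}^*-\tfrac{\boldsymbol{\beta}}{\boldsymbol{\beta}^{\top}\boldsymbol{\beta}}\rho^*(\boldsymbol{\beta}^*)\bigr)$, $\widetilde{Y}=Y+\sigma e$ with $e=\delta(Y-\mathbf{X}^{\top}\boldsymbol{\beta})/\mathbb{E}_{\mathbb{P}}[|Y-\mathbf{X}^{\top}\boldsymbol{\beta}|^r]^{1/r}$, verified through condition \eqref{equation:condition} and the Fenchel--Young equality, which is the paper's Corollary \ref{cor:1}. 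Your explicit handling of the degenerate cases $\mathbb{E}_{\mathbb{P}}[|Y-\mathbf{X}^{\top}\boldsymbol{\beta}|^r]=0$ and $\boldsymbol{\beta}=0$ is a minor addition the paper omits (only note that $0\in\partial\rho(0)$ presumes $\rho(0)=0$).
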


Theorem \ref{thm:penalized} shows that the worst-case, out-of-sample performance of any linear predictor over the collection of distributions $B_{\delta}(\mathbb{P})$ equals the $r$-th power of the objective function in \eqref{eq:penalized}. The result in \eqref{eq:duality} thus implies 
\begin{equation}\label{eq:regression}
\text{arg inf}_{\boldsymbol\beta\in \mathbb{R}^d} \Big[ \sup_{\widetilde{\mathbb{P}} \in B_{\delta}(\mathbb{P})} \mathbb{E}_{\widetilde{\mathbb{P}}} \big[ \big|Y-\mathbf{X}^{\top} \boldsymbol{\beta}
\big|^r \big] \Big] = \text{arg inf} _{\boldsymbol\beta\in \mathbb{R}^d} \sqrt[r]{\mathbb{E}_{\mathbb{P}} \left[ \left|Y-\mathbf{X}^{\top} \boldsymbol{\beta} \right|^r \right] }+ \delta\rho(\boldsymbol{\beta}).
\end{equation}

Our interpretation of equation \eqref{eq:regression} is that the $\sqrt{\text{LASSO}}$ and related estimators in \eqref{eq:penalized} have good out-of-sample performance for any \emph{testing} distribution, $\widetilde{\mathbb{P}}$, that is not far (in terms of the $\rho$-MSW metric) from the baseline \emph{training} distribution, $\mathbb{P}$. This result is independent of how the regularization parameter $\delta$ is selected and generalizes the connection between regularization and generalization performance first established in \cite{bertsimas2018characterization}.  

We briefly sketch the proof of Theorem~\ref{thm:penalized} here and refer to Section~\ref{sec:THMpenalizedproof} for details. It proceeds in two steps:
\begin{itemize}
\item[\textbf{Step 1.}] We use the triangle inequality and the definition of the $\rho$-MSW metric to show that
\begin{equation}
\label{eq:step1}
\mathbb{E}_{\widetilde{\mathbb{P}}} \left[ \left|Y-\mathbf{X}^{\top}\boldsymbol{\beta} \right|^r \right] \leq \left(  \sqrt[r]{\mathbb{E}_{\mathbb{P}} \left[ \left|Y-\mathbf{X}^{\top} \boldsymbol{\beta}\right|^r\right]}  + \delta  \left(\sigma+\rho(\boldsymbol{\beta})\right) \right)^r,
\end{equation}
holds for any $\boldsymbol{\beta} \in \mathbb{R}^d$ and any $\widetilde{\mathbb{P}} \in B_{\delta}(\mathbb{P})$.

\item[\textbf{Step 2.}] We show that for any $\boldsymbol{\beta}\in \text{dom}(\rho)$, the upper bound given in \textbf{Step 1} is tight. That is, we explicitly construct, for each $\boldsymbol \beta \in \text{dom}(\rho)$ a distribution $\mathbb{P}_{\boldsymbol{\beta}}^* \in B_{\delta}(\mathbb{P})$, for which the bound holds exactly. The worst-case distribution is presented in Corollary \ref{cor:1} below. 
 
\end{itemize}

\begin{corollary}\label{cor:1}
For each $\boldsymbol{\beta} \in \R^d$ the supremum in \eqref{eq:duality} is attained for the distribution $\mathbb{P}_{\boldsymbol{\beta}}^*$ corresponding to the random vector $(\widetilde{\mathbf{X}},\widetilde{Y})$ defined as
\begin{equation*} 
    \widetilde{\mathbf{X}} = \mathbf{X} - e \left( \boldsymbol{\beta}^* - \frac{\boldsymbol{\beta}}{\boldsymbol{\beta}^{\top} \boldsymbol{\beta}} \, \rho^*\left(\boldsymbol{\beta}^*\right) \right),  \qquad \widetilde{Y} = Y+\sigma e,
\end{equation*}
where  
\[ e := \frac{\delta \left(Y-\mathbf{X}^{\top}\boldsymbol{\beta} \right)}{ \sqrt[r]{\mathbb{E}_{\mathbb{P}} \left[ \left|Y-\mathbf{X}^{\top}\boldsymbol{\beta} \right|^r \right]} }, \qquad \left( \mathbf{X},Y\right) \sim \mathbb{P}.  \]
\end{corollary}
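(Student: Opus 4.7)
\emph{Proof proposal for Corollary \ref{cor:1}.} The plan is to verify two claims for the candidate distribution $\mathbb{P}^*_{\boldsymbol{\beta}}$: first, that $\mathbb{P}^*_{\boldsymbol{\beta}} \in B_\delta(\mathbb{P})$; and second, that the value $\mathbb{E}_{\mathbb{P}^*_{\boldsymbol{\beta}}}[|\widetilde{Y} - \widetilde{\mathbf{X}}^\top \boldsymbol{\beta}|^r]$ equals the right-hand side of \eqref{eq:duality}. Since Step 1 in the sketch of Theorem \ref{thm:penalized} already furnishes the matching upper bound for every $\widetilde{\mathbb{P}} \in B_\delta(\mathbb{P})$, together these two facts will exhibit $\mathbb{P}^*_{\boldsymbol{\beta}}$ as a maximizer.

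To check membership in $B_\delta(\mathbb{P})$, I would fix an arbitrary $\boldsymbol{\gamma} \in \mathbb{R}^d$ and use the natural coupling $\pi(\boldsymbol{\gamma})$ in which both $(\mathbf{X}, Y) \sim \mathbb{P}$ and $(\widetilde{\mathbf{X}}, \widetilde{Y}) \sim \mathbb{P}^*_{\boldsymbol{\beta}}$ are defined jointly as deterministic functions of a single draw $(\mathbf{X}, Y) \sim \mathbb{P}$ via the formulas in the statement. A direct substitution yields
\begin{equation*}
(\widetilde{Y} - Y) + (\mathbf{X} - \widetilde{\mathbf{X}})^\top \boldsymbol{\gamma} \; = \; e \left( \sigma + \boldsymbol{\gamma}^\top \!\left(\boldsymbol{\beta}^* - \frac{\boldsymbol{\beta}}{\boldsymbol{\beta}^\top \boldsymbol{\beta}} \rho^*(\boldsymbol{\beta}^*)\right)\right).
\end{equation*}
Assumption \eqref{equation:condition} bounds the scalar in the outer parentheses in absolute value by $\sigma + \rho(\boldsymbol{\gamma})$, while the construction of $e$ forces $\mathbb{E}_\mathbb{P}[|e|^r] = \delta^r$. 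Raising to the $r$-th power and taking expectations then produces precisely the inequality that defines $B_\delta(\mathbb{P})$ in \eqref{eq:Q_ball}; note that the same coupling works for every $\boldsymbol{\gamma}$ simultaneously.

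For the attainment of the bound, the same algebraic manipulation with $\boldsymbol{\gamma}$ replaced by $\boldsymbol{\beta}$ gives
\begin{equation*}
\widetilde{Y} - \widetilde{\mathbf{X}}^\top \boldsymbol{\beta} \; = \; (Y - \mathbf{X}^\top \boldsymbol{\beta}) + e\left( \sigma + \boldsymbol{\beta}^\top \boldsymbol{\beta}^* - \rho^*(\boldsymbol{\beta}^*)\right).
\end{equation*}
The Fenchel--Young identity, which holds \emph{with equality} because $\boldsymbol{\beta}^* \in \partial \rho(\boldsymbol{\beta})$, gives $\boldsymbol{\beta}^\top \boldsymbol{\beta}^* - \rho^*(\boldsymbol{\beta}^*) = \rho(\boldsymbol{\beta})$. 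Plugging in the explicit form of $e$ and factoring out $(Y - \mathbf{X}^\top \boldsymbol{\beta})$ leads to
\begin{equation*}
|\widetilde{Y} - \widetilde{\mathbf{X}}^\top \boldsymbol{\beta}|^r \; = \; |Y - \mathbf{X}^\top \boldsymbol{\beta}|^r \left(1 + \frac{\delta(\sigma + \rho(\boldsymbol{\beta}))}{\sqrt[r]{\mathbb{E}_\mathbb{P}[|Y - \mathbf{X}^\top \boldsymbol{\beta}|^r]}}\right)^{\!r},
\end{equation*}
and integrating with respect to $\mathbb{P}$ collapses to the right-hand side of \eqref{eq:duality}.

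I anticipate two technical hurdles rather than a single main obstacle. The first is notational: when $\boldsymbol{\beta} = \mathbf{0}$ the ratio $\boldsymbol{\beta}/(\boldsymbol{\beta}^\top \boldsymbol{\beta})$ is ill-defined, and the argument proceeds under the convention that $\rho^*(\boldsymbol{\beta}^*) \cdot \boldsymbol{\beta}/(\boldsymbol{\beta}^\top \boldsymbol{\beta}) := \mathbf{0}$, a choice that is consistent whenever one picks a subgradient $\boldsymbol{\beta}^* \in \partial\rho(\mathbf{0})$ at which $\rho^*(\boldsymbol{\beta}^*)$ is finite (e.g.\ $\boldsymbol{\beta}^* = \mathbf{0}$ when $\mathbf{0}$ minimizes $\rho$). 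The more delicate issue is the degenerate case $\mathbb{E}_\mathbb{P}[|Y - \mathbf{X}^\top \boldsymbol{\beta}|^r] = 0$: the prescribed construction then collapses to $\mathbb{P}^*_{\boldsymbol{\beta}} = \mathbb{P}$, whose expectation vanishes while the right-hand side of \eqref{eq:duality} equals $\delta^r(\sigma + \rho(\boldsymbol{\beta}))^r > 0$, so attainment in this case must be recovered by a separate device, e.g.\ replacing the normalized residual $e$ by a direction-independent perturbation of unit $r$-th moment, or by a limiting argument from nearby $\boldsymbol{\beta}$'s where the denominator is strictly positive.
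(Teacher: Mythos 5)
Your proposal is correct and follows essentially the same route as the paper: the paper proves Corollary \ref{cor:1} as Step 2 of the proof of Theorem \ref{thm:penalized}, verifying membership of $\mathbb{P}^*_{\boldsymbol{\beta}}$ in $B_\delta(\mathbb{P})$ via the natural coupling, condition \eqref{equation:condition} and $\mathbb{E}_{\mathbb{P}}[|e|^r]=\delta^r$, and then computing the value exactly through the Fenchel--Young equality $(\boldsymbol{\beta}^*)^\top\boldsymbol{\beta}-\rho^*(\boldsymbol{\beta}^*)=\rho(\boldsymbol{\beta})$. Your closing remarks on the degenerate cases $\boldsymbol{\beta}=\mathbf{0}$ and $\mathbb{E}_{\mathbb{P}}[|Y-\mathbf{X}^\top\boldsymbol{\beta}|^r]=0$ are sensible extra care that the paper itself does not spell out.
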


One aspect of Corollary \ref{cor:1} that is worth emphasizing is that the testing distribution that attains the worst out-of-sample performance is an additive perturbation of the baseline training distribution. The perturbation has a low-dimensional structure where a one-dimensional error, $e$, which is proportional to prediction error,  $Y-\mathbf{X}^{\top} \boldsymbol{\beta} $, is added to $\mathbf{X}$ using loadings that depend on the subgradient of $\rho$ at $\boldsymbol \beta$ and also on the conjugate of $\rho$.  

It is easy to see that the minimizer in \eqref{eq:regression} is attained. Denote this minimizer by $\boldsymbol{\beta}(\mathbb{P})$. Then $\boldsymbol{\beta}(\P)$ is also a minimizer of $\mathbb{E}_{\mathbb{P}_{\boldsymbol{\beta}}^*} \left[ \left|Y-\mathbf{X}^{\top}\boldsymbol{\beta} \right|^r \right]$.  Indeed, for any $\boldsymbol{\beta} \in \R^d$ we have
\begin{eqnarray*}
\mathbb{E}_{\mathbb{P}_{\boldsymbol{\beta}}^*} \left[ \left|Y-\mathbf{X}^{\top}\boldsymbol{\beta} \right|^r \right] & = &  \sup_{\widetilde{\mathbb{P}} \in B_{\delta}(\mathbb{P})} \mathbb{E}_{\widetilde{\mathbb{P}}} \left[ \left|Y-\mathbf{X}^{\top} \boldsymbol{\beta} \right|^r \right] \\
& \geq &  \inf_{\boldsymbol\beta\in \mathbb{R}^d} \sup_{\widetilde{\mathbb{P}} \in B_{\delta}(\mathbb{P})} \mathbb{E}_{\widetilde{\mathbb{P}}} \big[ \left|Y-\mathbf{X}^{\top} \boldsymbol{\beta}
\right|^r \big] \\
& = & \sup_{\widetilde{\mathbb{P}} \in B_{\delta}(\mathbb{P})} \mathbb{E}_{\widetilde{\mathbb{P}}} \big[ \left|Y-\mathbf{X}^{\top} \boldsymbol{\beta} (\mathbb{P})
\right|^r \big] \\
& \geq &  \mathbb{E}_{\mathbb{P}_{\boldsymbol{\beta}}^*} \left[ \left|Y-\mathbf{X}^{\top}\boldsymbol{\beta}(\mathbb{P}) \right|^r \right].
\end{eqnarray*}
In particular, for any linear predictor with slope $\boldsymbol{\beta}$, it is always possible to find a perturbation of $\mathbb{P}$ for which a predictor based on \eqref{eq:regression} performs better.

\begin{rem}[On condition \eqref{equation:condition}] If $\rho$ is a norm, then the condition in \eqref{equation:condition} is automatically satisfied; i.e., there exists a $\boldsymbol{\beta}^* \in \partial \rho(\boldsymbol{\beta})$ such that \eqref{equation:condition}  is true. Thus, the conclusion of Theorem~\ref{thm:penalized} holds for all $\rho(\cdot)=\|\cdot\|$  that are norms. Indeed,  recalling that the dual norm of $\|\cdot\|$ is given by
\begin{align*}
\|\mathbf{x}\|_*:= \sup_{\mathbf{y}: \|\mathbf{y}\|=1} \mathbf{x}^\top \mathbf{y},  
\end{align*} 
in that case \cite[Example 3.26]{Boyd:2004} states that $\rho^*(\mathbf{x})=\infty \mathds{1}_{\{\|\mathbf{x}\|_*>1\}}$. Recall further that $\boldsymbol{\beta}^* \in \partial \rho(\boldsymbol{\beta})$ if and only if
\begin{align}\label{eq:beta^*}
(\boldsymbol{\beta}^*)^{\top}\boldsymbol{\beta} - \rho^*(\boldsymbol{\beta}^*) = \rho(\boldsymbol{\beta}).    
\end{align}
Both facts together imply that $\rho^*(\boldsymbol{\beta}^*)=0$; thus, $\|\boldsymbol{\beta}^*\|_*\le 1$ for all $\boldsymbol{\beta}^* \in \partial \rho(\boldsymbol{\beta}).$ Hence in \eqref{equation:condition}, as claimed, we have 
\begin{equation} 
    \bigg| \boldsymbol{\gamma}^{\top} \left( {\boldsymbol{\beta}^*} - \frac{\boldsymbol{\beta}}{\boldsymbol{\beta}^{\top} \boldsymbol{\beta}} \, \rho^*(\boldsymbol{\beta}^*) \right) \bigg | = \big|\boldsymbol{\gamma}^{\top} \boldsymbol{\beta}^* \big| \le \| \boldsymbol{\gamma}\| \, \| \boldsymbol{\beta}^*\|_* \leq \|\boldsymbol{\gamma}\|, \quad \forall \: \boldsymbol{\gamma} \in \mathbb{R}^d.
\end{equation}
\label{rem:norms}
\end{rem}

On the other hand, the following example shows that condition \eqref{equation:condition} is not only satisfied by norms:

\begin{ex}[Condition \eqref{equation:condition} for a function $\rho$ that is not a norm]
Fix any compact set $K\subseteq \R^d$ such that $-K=K$ and consider
\begin{align*}
\rho(\boldsymbol{\beta})= \sup_{\mathbf{y}\in K} \boldsymbol{\beta}^\top \mathbf{y}.
\end{align*}
Then $\rho$ is convex (as a supremum of linear functions), finite (as $K$ is compact), non-negative (as $K=-K$), symmetric $\rho(\boldsymbol{\beta})=\rho(-\boldsymbol{\beta})$ (as $K=-K$), and homogeneous $\rho(\lambda\boldsymbol{\beta})=\lambda \rho(\boldsymbol{\beta})$. Thus, 
\begin{align*}
\rho^*(\boldsymbol{\beta}^*)= \sup_{\boldsymbol{\gamma} \in \R^d} \left({\boldsymbol{\beta}^*}^\top \boldsymbol{\gamma} -\rho(\boldsymbol{\gamma})\right)=\begin{cases}
\infty & \text{if } \exists \, \boldsymbol{\gamma}\in \R^d \text{ s.t. } {{\boldsymbol{\beta}^*}}^\top \boldsymbol{\gamma} -\rho(\boldsymbol{\gamma})>0,\\
0 & \text{if } {\boldsymbol{\beta}^*}^\top \boldsymbol{\gamma} -\rho(\boldsymbol{\gamma})\le 0 \text{ for all } \boldsymbol{\gamma} \in \R^d.
\end{cases}
\end{align*}
By \eqref{eq:beta^*} we 
conclude that $\rho^*(\boldsymbol{\beta}^*)=0$ for all $\boldsymbol{\beta}\in \R^d$; therefore,
${\boldsymbol{\beta}^*}^\top \boldsymbol{\gamma} \le \rho(\boldsymbol{\gamma})$ for all $\boldsymbol{\gamma} \in \R^d$. By symmetry of $\rho$, we also have that 
$|{\boldsymbol{\beta}^*}^\top \boldsymbol{\gamma}| \le \rho(\boldsymbol{\gamma}).$ It follows that
\begin{equation*} 
    \left| \boldsymbol{\gamma}^{\top} \left( {\boldsymbol{\beta}^*} - \frac{\boldsymbol{\beta}}{\boldsymbol{\beta}^{\top} \boldsymbol{\beta}} \, \rho^*(\boldsymbol{\beta}^*) \right) \right |  \leq \rho(\boldsymbol\gamma), \quad \forall \: \boldsymbol{\gamma} \in \mathbb{R}^d.
\end{equation*}
\end{ex}

\begin{rem} \label{rem:Wass_ball}
Take any norm $\|\cdot\|$ on $\R^{d+1}$ satisfying $\|(0,\dots,0,1)\|=1$ and recall that its dual norm is given by
\begin{align}\label{eq:dual}
\|\mathbf{x}\|_*:= \sup_{\mathbf{y}: \|\mathbf{y}\|=1} \mathbf{x}^\top \mathbf{y}.    
\end{align}
Assume that $\E_{\P}\left[\left\|(\mathbf{X},Y)\right\|_*^r\right]<\infty$ and consider a Wasserstein ball $\mathcal{B}^{\mathcal{W}}_{ \delta}(\mathbb{P})$ with cost $\|\cdot\|_*$, defined as
\begin{equation}
\mathcal{B}^{\mathcal{W}}_{\delta}(\mathbb{P}) = \left\{ \widetilde{\mathbb{P}} \in \mathcal{P}_r (\mathbb{R}^{d+1}): \, \: \mathcal{W}_{r}(\mathbb{P}, \widetilde{\mathbb{P}}) \leq \delta \right\},
\label{eq:Wass_ball}
\end{equation}
where
\[\mathcal{W}_{r}(\mathbb{P}, \widetilde{\mathbb{P}}) = \inf_{\substack{\pi \, \in \, \Pi(\mathbb{P}, \widetilde{\mathbb{P}}): \\ ((\mathbf{X}, Y), \, (\widetilde{\mathbf{X}}, \widetilde{Y})) \sim \pi}}  \sqrt[r]{\mathbb{E}_{\pi}\left[\norm{(\mathbf{X},Y) - (\widetilde{\mathbf{X}},\tilde{Y})}^r_* \right]}. \]

We show that for $\rho(\cdot) = \norm{\cdot}$, the ball defined in \eqref{eq:Q_ball} contains the ball in \eqref{eq:Wass_ball}, i.e.\ $ \mathcal{B}^{\mathcal{W}}_{\delta}(\mathbb{P}) \subseteq B_{\delta}(\mathbb{P})$. For this, we note that by \eqref{eq:dual} we have
\begin{align*}
\mathbb{E}_{\pi}\left[ \left|\left(\tilde{Y}-Y\right)+ \left(  \mathbf{X}-\widetilde{\mathbf{X}} \right)^{\top} \boldsymbol{\gamma} \right| ^r\right] &\leq \norm{\left(\boldsymbol{\gamma},-1 \right)}^r \,\, \mathbb{E}_{\pi}\left[  \norm{ (\mathbf{X},Y)- (\widetilde{\mathbf{X}},\tilde{Y})}^r_* \right]\\
&\le  \left(1+\norm{\boldsymbol{\gamma}}\right)^r \,\, \mathbb{E}_{\pi}\left[  \norm{ (\mathbf{X},Y)- (\widetilde{\mathbf{X}},\tilde{Y})}^r_* \right].
\end{align*}
As $\sigma \ge 1$, we conclude
\begin{align*}
&\sup_{\boldsymbol{\gamma} \in  \mathbb{R}^d} \inf_{\pi \, \in \, \Pi(\mathbb{P}, \widetilde{\mathbb{P}})} \frac{1}{\sigma+\norm{\boldsymbol{\gamma}}} \sqrt[r]{\mathbb{E}_{\pi}\left[ \left| (\tilde Y-Y )+ ( \mathbf{X}-\widetilde{\mathbf{X}} )^{\top} \boldsymbol{\gamma} \right|^r  \right]} \\
&\qquad \hspace{15mm} \leq  \inf_{\pi \in \Pi(\mathbb{P}, \widetilde{\mathbb{P}})}  \sqrt[r]{ \mathbb{E}_{\pi}\left[  \| (\mathbf{X},Y )- (\widetilde{\mathbf{X}},\tilde{Y})\|^r_* \right]}.
\end{align*}
The above can be applied in particular to $\|\cdot\|=\|\cdot\|_p$ and $\|\cdot\|_*=\|\cdot\|_q$, where $1/p+1/q=1.$
\end{rem}

We note that the conditions used for the derivations in Remark 2 are sufficient, but not necessary. To make this point, consider the case in which $r=2$ and $\rho(\boldsymbol{\beta}) = \|\boldsymbol{\beta} \|_1$. The Wasserstein distance, $\mathcal{W}_2$, between $\mathbb{P}$ and $\Tilde{\mathbb{P}}$ is defined by
$$ \mathcal{W}_2(\mathbb{P},\Tilde{\mathbb{P}}) =  \inf_{\pi \in \Pi(\mathbb{P},\Tilde{\mathbb{P}})} \mathbb{E}_{\pi} [ \| (\mathbf{X},Y) - (\Tilde{\mathbf{X}},\Tilde{Y}) \|_2^2  ]^{1/2}~,$$
where $\| \cdot \|_2$ is the Euclidean distance. For any coupling $\pi$, the Cauchy-Schwarz inequality implies
\begin{eqnarray*}
\mathbb{E}_{\pi }\left[ \left|(\tilde{Y}-Y)+ (  \mathbf{X}-\widetilde{\mathbf{X}} )^{\top} \boldsymbol{\beta} \right| ^2\right]^{1/2}  &\leq& \|\left(\boldsymbol{\beta},-1 \right)\|_2 \,\, \mathbb{E}_{\pi}\left[  \| (\mathbf{X},Y)- (\widetilde{\mathbf{X}},\tilde{Y})\|_2^2 \right]^{1/2},
\end{eqnarray*}
where the right-hand side of the previous inequality is less than 
\[(1 + \|\boldsymbol{\beta}\|_2)\mathbb{E}_{\pi}\left[  \| (\mathbf{X},Y)- (\widetilde{\mathbf{X}},\tilde{Y})\|_2^2 \right]^{1/2},\]
due to the triangle inequality. Since we have assumed that $\sigma \ge 1$, we have $1 + \|\boldsymbol{\beta}\|_2 \le \sigma + \|\boldsymbol{\beta}\|_1$. Consequently:
$$ \widehat{\mathcal{W}}_{2, \rho, \sigma}(\mathbb{P},\Tilde{\mathbb{P}}) \le \mathcal{W}_2(\mathbb{P},\Tilde{\mathbb{P}}).$$
Thus, balls based on the $\rho$-MSW metric can be larger than balls based on the $d$-dimensional Wasserstein metric, even when the latter does not use a cost function based on the dual norm of $\rho$. Our previous derivations also hold, \emph{mutatis mutandi}, for penalty functions $\rho(\boldsymbol{\beta}) = \|\boldsymbol{\beta}\|_p$ whenever $p \in [1,2]$. Remark 2 focuses on the case in which i) $\rho$ is a norm, and ii) the cost function used in the $d$-dimensional Wasserstein metric is associated to the dual norm of $\rho$ to make our results directly comparable to those in Proposition 2 in \cite{blanchet2019robust}.

\subsection{Examples of distributions in the $\rho$-MSW ball}\label{sec:examples}

In this subsection we analyze the types of testing distributions that are contained in the ball defined in \eqref{eq:Q_ball}. We do this by considering different estimators that take the form \eqref{eq:penalized}.  

\subsubsection{$\sqrt{\text{LASSO}}$}
Let us take $r=2$ and
\[ \rho(\boldsymbol{\beta}) =  \|\boldsymbol{\beta}\|_1 =  \sum_{j=1}^{d} |\beta_j|.  \]
Under this choice of penalty function, the regression problem \eqref{eq:regression} is the objective function of the $\sqrt{\mathrm{LASSO}}$ of \cite{belloni2011square}, also studied in \cite{belloni2014pivotal}. These papers have shown that the $\sqrt{\mathrm{LASSO}}$ estimator achieves the near-oracle rates of convergence in sparse, high-dimensional regression models over data distributions that extend significantly beyond normality. 

Clearly $\rho$ is a norm; in particular, it is nonnegative and convex. Thus, Condition \eqref{equation:condition} of Theorem~\ref{thm:penalized} is satisfied, cf.\ Remark~\ref{rem:norms}.

One set of distributions that belongs to a neighborhood of size $\delta$ based on the $\rho$-MSW metric is:  
\begin{equation}
\begin{split}
\label{eq:BLASSO}
B^{\sqrt{\mathrm{LASSO}}}_\delta(\mathbb{P}) := &\left\{ \mathbb{Q} \in \mathcal{P}_2 (\mathbb{R}^{d+1}) \: | \:  \exists \textrm{ a coupling } \pi \in \Pi(\mathbb{Q}, \mathbb{P}) \textrm{ for which:} \right. \\
& \mathbb{E}_{\pi}\left[\left| \tilde{X}_j - X_j \right|^2 \right ] \leq \delta^2, \: \forall \: j=1,\ldots,d, \text{ and } \E_\pi\left[ \left|\widetilde{Y}- Y \right|^2\right] \le (\delta \sigma)^2,\\
& \textrm{where } ((\mathbf{X}, Y), (\widetilde{\mathbf{X}}, \widetilde{Y})) \sim \pi  \}.
\end{split}
\end{equation}
This set of distributions contains perturbations of covariates and outcomes that are small in 2-norm. We verify that $B^{\sqrt{\mathrm{LASSO}}}_\delta(\mathbb{P}) \subseteq B_{\delta}(\mathbb{P})$, where $B_{\delta}(\mathbb{P})$ is the set of balls used in Theorem~\ref{thm:penalized} and defined in \eqref{eq:Q_ball}.  

To see this, notice $\mathbb{E}_{\pi}[| \tilde{X}_j - X_j |^2 ] \leq \delta^2$ for all $j=1,\ldots,d$ implies condition \eqref{eq:Q_ball}, i.e.\
$$\mathbb{E}_{\pi}\left[ \left |(\tilde{Y}-Y)+ (\mathbf{X}-\widetilde{\mathbf{X}})^{\top} \boldsymbol{\gamma} \right |^2 \right]  \leq \delta^2 \left(\sigma+\rho(\boldsymbol{\gamma})\right)^2.$$
Indeed, the triangle inequality implies that for any  $\boldsymbol{\gamma} \in  \mathbb{R}^d$ and any coupling $\pi \in \Pi (\mathbb{P},\mathbb{Q})$ consistent with \eqref{eq:BLASSO}, we have
\begin{align*}
\sqrt{\mathbb{E}_{\pi}\left[ \left|(\tilde{Y}-Y)+ ( \mathbf{X}-\widetilde{\mathbf{X}})^{\top} \boldsymbol{\gamma} \right |^2 \right]} &\leq \sqrt{\E_\pi\left[ \left|\tilde{Y}-Y \right|^2 \right]} + \sum_{j=1}^{d} \left|\gamma_j \right| \sqrt{\mathbb{E}_{\pi}\left[\left(\tilde{X}_j-X_j \right)^2 \right]}  \\
&\leq \delta\sigma+ \delta \sum_{j=1}^{d} |\gamma_j|  =\delta (\sigma+\rho(\boldsymbol{\gamma})).
\end{align*}
Consequently, $B^{\sqrt{\mathrm{LASSO}}}_\delta(\mathbb{P}) \subseteq B_{\delta}(\mathbb{P})$. We note that the other direction, namely, $B_{\delta}(\mathbb{P}) \subseteq B^{\sqrt{\mathrm{LASSO}}}_\delta(\mathbb{P})$ does not hold in general.

It is worth mentioning that the set $B^{\sqrt{\mathrm{LASSO}}}_\delta(\mathbb{P})$ contains different versions of $(\mathbf{X},Y)$ measured with error. For example, any additive measurement error model of the form
$\tilde{X}_j = X_j + u_j$ and $\tilde{Y} = Y + v,$
where $\mathbb{E}[u_j^2] \leq \delta^2$ and $\mathbb{E}[v^2] \leq (\delta\sigma)^2$. Also,  $B^{\sqrt{\mathrm{LASSO}}}_\delta(\mathbb{P})$ contains multiplicative errors-in-variables models where
$\tilde{X}_j = X_j u_j,$ and $\tilde{Y} = Yv,$
with $u$'s independent of $(\mathbf{X},Y)$, having mean equal to one, $\mathbb{E}_{\mathbb{P}}[X_j^2] \, \mathbb{E}[(u_j-1)^2 \leq \delta^2$, and independent of $v$ having mean equal to one and $\mathbb{E}_{\mathbb{P}}[Y^2] \, \mathbb{E}[\left(v-1\right)^2] \leq (\delta\sigma)^2$.

It is well known that the conjugate of $\rho$ is
\[ \rho^*(\boldsymbol{\beta}) = 
\begin{cases}
  0   & \max \left\{ |\beta_1|, \ldots, |\beta_d| \right\} \leq 1,   \\
  \infty   & \textrm{otherwise}.
\end{cases}
\]
The argument is analogous to Remark~\ref{rem:norms}. Moreover, algebra shows
$\boldsymbol{\beta}^* =  \left(\textrm{sign}(\beta_1), \ldots, \textrm{sign}(\beta_d) \right)^{\top},$
is a subgradient of $\rho$ at $\boldsymbol{\beta}$. Using these facts, we can determine the worst-case distribution for each particular $\boldsymbol{\beta}$. Indeed, Corollary \ref{cor:1} states that:
$\tilde{\mathbf{X}} = \mathbf{X} -  e \left(\textrm{sign}(\beta_1), \ldots, \textrm{sign}(\beta_d) \right)^{\top},$ and $\tilde{Y} = Y+ \sigma e,$
where 
\[ e := \frac{\delta\left(Y-\mathbf{X}^{\top}\boldsymbol{\beta}\right)}{ \sqrt{\mathbb{E}_{\mathbb{P}} \left[ \left(Y-\mathbf{X}^{\top}\boldsymbol{\beta} \right)^2 \right]} }, \qquad ( \mathbf{X},Y) \sim \mathbb{P}. \]
The worst-case mean-squared error of $\sqrt{\mathrm{LASSO}}$ is attained at distributions where there is a (possibly correlated) measurement error that has a factor structure. Note that the worst-case distribution is an element of \eqref{eq:BLASSO}.

\subsubsection{Square-root SLOPE}

Now suppose again that $r=2$, but let
$$ \rho(\boldsymbol{\beta}) = \sum_{j=1}^d \lambda_j |\boldsymbol{\beta}|_{(j)},$$
where $\lambda_1\ge \dots \ge \lambda_d \ge 0$ and $|\boldsymbol{\beta}|_{(j)}$ are the decreasing order statistics of the absolute values of the coordinates of $\boldsymbol{\beta}$. 
Under this penalty function---which is nonnegative---the penalized regression problem in \eqref{eq:regression} is the objective function of the square-root SLOPE of \cite{stucky2017sharp}.

An equivalent definition for this penalty function is 
\begin{equation} \label{equation:aux-slope}
\rho(\boldsymbol{\beta}) = \max_{pm} \sum_{j=1}^d \lambda_{pm(j)} |\beta_j |,
\end{equation}
where we maximize over all permutations, $pm$, of the coordinates $\{1,\dots,d\}$. It follows that $\rho$ is a norm, so Condition \eqref{equation:condition} of Theorem~\ref{thm:penalized} is satisfied (see Remark~\ref{rem:norms}).

For a given $\boldsymbol{\beta} \in \mathbb{R}^d$, let $pm^*$ be a permutation that solves \eqref{equation:aux-slope}. Define $\boldsymbol{\beta}^*$ by $\beta^*_{j} =  \lambda_{pm^*(j)} \: \text{sign}(\beta_{j})$. Algebra shows that $\rho(\boldsymbol{\beta}) = {\boldsymbol{\beta}^*}^{\top}\boldsymbol{\beta}$
and $ {\boldsymbol{\beta}^*}^{\top} \boldsymbol{\gamma}\le \rho(\boldsymbol{\gamma}) ,$
for any $\boldsymbol{\gamma} \in \mathbb{R}^d$. It follows that $\rho(\boldsymbol{\gamma}) \ge \rho(\boldsymbol{\beta}) + {\boldsymbol{\beta}^*}^{\top} \boldsymbol{\gamma} - {\boldsymbol{\beta}^*}^{\top} \boldsymbol{\beta}$, which implies that $\boldsymbol{\beta}^*$ is a subgradient of $\rho$ at $\boldsymbol{\beta}$. Recall that $\rho^*(\boldsymbol{\beta}^*) = 0$; thus, \eqref{equation:condition} holds.

In this case, distributions belonging to balls of size $\delta$ based on the $\rho$-MSW metric are
\begin{align*}
B^{\textrm{SLOPE}}_\delta(\mathbb{P}):= &\{ \mathbb{Q} \in \mathcal{P}_2 (\mathbb{R}^{d}) \:: \: \exists \textrm{ a coupling } \pi \in \Pi(\mathbb{Q},\mathbb{P}) \textrm{ for which:} \  \\
& \quad \mathbb{E}_{\pi}\left[\left| \tilde{X}_{(j)} - X_{(j)} \right|^2 \right ] \leq (\delta\lambda_j)^2, \: \forall \: j=1,\ldots,d,\\
& \quad \text{and } \E_{\pi} \left[ \left|\widetilde{Y} -Y \right|^2 \right| \le (\delta\sigma)^2, \textrm{ where } ((\mathbf{X}, Y), (\widetilde{\mathbf{X}}, \widetilde{Y})) \sim \pi \},
\end{align*}
where the decreasing order statistic is induced by the vector $\Big(\mathbb{E}_{\pi}\Big[ \Big|\tilde{X}_j - X_j \Big|^2 \Big]\Big)_{j=1,\dots,d}$. As for the $\sqrt{\mathrm{LASSO}}$, we check that $B^{\textrm{SLOPE}}_\delta(\mathbb{P})\subseteq B_\delta(\P)$. The triangle inequality implies that for any coupling $\pi \in \Pi(\mathbb{P},\mathbb{Q})$:
\begin{align*}
     \sqrt{\mathbb{E}_{\pi}\left[\left|(\tilde{Y}-Y)+(\mathbf{X}-\tilde{\mathbf{X}})^{\top} \gamma\right|^2 \right]} &\leq \sqrt{\E_{\pi}\left[\left|\tilde{Y}-Y \right|^2\right]}+\sum_{j=1}^{d} \left|\gamma_j \right| \sqrt{\mathbb{E}_{\pi} \left[\left|\tilde{X}_j-X_j \right|^2 \right]} \\
     &=  \sqrt{\E_{\pi}\left[\left|\tilde{Y}-Y \right|^2\right]}+\sum_{j=1}^{d} \left|\gamma_{(j)} \right| \sqrt{\mathbb{E}_{\pi}\left[\left|\tilde{X}_{(j)}-X_{(j)} \right|^2\right]} \\
     &\le \delta \left(\sigma+\rho(\gamma)\right),
\end{align*}
 where the last equality follows by the definition of $B_\delta^{\textrm{SLOPE}}(\mathbb{P})$ and \eqref{equation:aux-slope}.

Finally, we report the worst-case distribution for each particular $\boldsymbol{\beta}$. Corollary \ref{cor:1} shows that
$\tilde{\mathbf{X}} = \mathbf{X} - e {\boldsymbol{\beta}^*}$ and $ \tilde{Y}=Y+\sigma e,$
where the $j$-coordinate of $\boldsymbol{\beta}^*$ is $\lambda_{pm^*(j)} \: \text{sign}(\beta_{j})$ and
\[ e := \frac{\delta \left(Y-\mathbf{X}^{\top}\boldsymbol{\beta}\right)}{ \sqrt{\mathbb{E}_{\mathbb{P}} \left[ \left|Y-\mathbf{X}^{\top}\boldsymbol{\beta} \right|^2 \right]} }, \quad (\mathbf{X},Y) \sim \mathbb{P}. \]
Note that the worst-case distribution is an element of $B^{\textrm{SLOPE}}_\delta(\mathbb{P})$.

\section{Finite sample guarantees for the $\rho$-MSW-distance}\label{sec:rates}

Throughout this section, we assume that the data $\{( \mathbf{X}_i,Y_i)\}_{i=1}^n$ consists of i.i.d.\ draws from a true distribution that we denote by $\P$. We denote the empirical distribution based on the available data by $\P_n$. 

This section provides explicit upper bounds on the radius $\delta$ of the ball $B_{\delta}(\mathbb{P}_n)$ defined in \eqref{eq:Q_ball}, to guarantee that the true (and unknown) distribution, $\P$, belongs to the ball $B_\delta(\P_n)$ with a pre-specified probability. Our derivations are valid for any finite sample, which means that they hold regardless of the dimension of the covariates, $d$, the sample size, $n$, and the true distribution $\mathbb{P}$. 

Recall from \eqref{eq:Wass} that
\begin{equation}
\label{eq:proj_wass}
 \iW_{r,\rho,\sigma}(\P, \tilde\P)=\sup_{\boldsymbol{\gamma} \in  \mathbb{R}^d} \inf_{\substack{\pi \, \in \, \Pi(\mathbb{P}, \widetilde{\mathbb{P}}): \\ ((\mathbf{X}, Y), \, (\widetilde{\mathbf{X}}, \widetilde{Y})) \sim \pi}} \frac{1}{\sigma+\rho(\boldsymbol{\gamma})} \sqrt[r]{\mathbb{E}_{\pi}\left[ \left| (\tilde Y-Y)+ ( \mathbf{X}-\widetilde{\mathbf{X}} )^{\top} \boldsymbol{\gamma} \right|^r \right]}.
\end{equation}
An important comment about the hyperparameter $\sigma$ is in turn. Suppose that $\rho$ is a norm and we tried to set $\sigma = 0$ in the definition of $\iW_{r,\rho,\sigma}$ above. It then follows from the definition, that any distribution $\Tilde{\mathbb{P}}$ for which $\iW_{r,\rho,0}(\mathbb{P},\Tilde{\mathbb{P}}) \leq \delta$ must have the same marginal distributions over the outcome variable; i.e.,  $\Tilde{Y} \sim Y$. As a result, whenever $Y$ is a random variable of the absolutely continuous type, any ball around $\mathbb{P}_n$ will \emph{fail to} contain the true distribution $\mathbb{P}$. Therefore, introducing a hyperparameter $\sigma > 0$ is important for our analysis.

Notice that we can rewrite the equation above in terms of the one-dimensional Wasserstein metric:
\begin{align}
\label{eq:proj_wass_rep}
 \iW_{r,\rho,\sigma}(\P, \tilde \P)=\sup_{\boldsymbol{\gamma} \in  \mathbb{R}^d}
\frac{1}{\sigma+\rho(\boldsymbol{\gamma})} \,
\mathcal{W}_r\left(\left[(\mathbf{X},Y)^{\top}\boldsymbol{\bar{\gamma}}\right]_*\mathbb{P},\,  \left[(\widetilde{\mathbf{X}},\tilde Y)^{\top}\boldsymbol{\bar{\gamma}}\right]_*\widetilde{\mathbb{P}}\right),
\end{align}
where $\boldsymbol{\bar{\gamma}}^\top = (\boldsymbol{\gamma}^\top,-1)$ and $f_* \mathbb{P}$ denotes the pushforward measure of $\mathbb{P}$ with respect to a map $f:\mathbb{R}^{d+1} \to \R$ and $\mathcal{W}_r$. The one-dimensional Wasserstein metric is simply defined as
\begin{align}
\label{eq:wass_def}
\mathcal{W}_r(\mathbb{Q}, \widetilde{\mathbb{Q}})=  \inf_{\substack{\pi \, \in  \, \Pi(\mathbb{Q}, \widetilde{\mathbb{Q}}): \\ (X, \tilde X) \sim \pi}}  \sqrt[r]{\mathbb{E}_{\pi}\left[\left|X-\tilde{X} \right|^r \right]}.
\end{align}

We focus on the case where  $\rho$ satisfies \eqref{eq:new}, i.e.\
$c_d \|(\boldsymbol{\gamma}, -1)\|\le \rho(\boldsymbol{\gamma})+1,$ for all $\boldsymbol{\gamma} \in \R^d,$
for some constant $c_d>0$ and an arbitrary norm $\|\cdot\|$ on $\mathbb{R}^{d+1}$. E.g.\ for $\gamma(\cdot)=\|\cdot\|_1$, \eqref{eq:new} is satisfied with $c_d=1$.

Using the definition in \eqref{eq:proj_wass_rep} we derive the following upper bound for $\iW_{r,\rho,\sigma}(\P, \tilde \P)$:
\begin{align}
\iW_{r,\rho,\sigma}(\P, \tilde \P)&=\sup_{\boldsymbol{\gamma} \in  \mathbb{R}^d}
\textcolor{black}{\frac{\|\boldsymbol{\bar{\gamma}}\|}{\sigma+\rho(\boldsymbol{\gamma})}\frac{1}{\|\boldsymbol{\bar{\gamma}}\|}}
\mathcal{W}_r\left(\left[(\mathbf{X},Y)^{\top}\boldsymbol{\bar{\gamma}}\right]_*\mathbb{P},\,  \left[(\widetilde{\mathbf{X}},\tilde Y)^{\top}\boldsymbol{\bar{\gamma}}\right]_*\widetilde{\mathbb{P}}\right) \notag \\
&\le  c_{\rho,d} \:  \left(  \sup_{ \widetilde{\boldsymbol{\gamma}}:  \|\widetilde{\boldsymbol{\gamma}}\| = 1} 
\mathcal{W}_r\left(\left[(\mathbf{X},Y)^\top\widetilde{\boldsymbol{\gamma}}\right]_*\mathbb{P},\,  \left[(\widetilde{\mathbf{X}},\tilde Y)^\top \widetilde{\boldsymbol{\gamma}}\right]_*\widetilde{\mathbb{P}}\right) \right) =: c_{\rho,d} \:  \W_r(\P, \tilde\P),\label{eq:bar_W}
\end{align}
where $\boldsymbol{\bar{\gamma}}^\top = (\boldsymbol{\gamma}^\top,-1)$ and $c_{\rho,d} : =  \max \left\{ 1/c_d,1 \right\}.$

The quantity $\W_r$ defined in \eqref{eq:bar_W} is known as the max-sliced Wasserstein (MSW) distance on $(\R^{d+1},\|\cdot\|)$. Moreover, it is a special case of the Projection Robust Wasserstein (PRW) distance, also called the Wasserstein Projection Pursuit (WPP), see \cite[Definition 1]{Paty2019SubspaceRW}. The work in \cite[Proposition 1]{Paty2019SubspaceRW} shows that $\W_r(\mathbb{P}, \widetilde{\mathbb{P}})$ is a metric (the proof is stated for the case $r=2$, but carries over line by line to arbitrary $r\ge 1$). Note that the connection between our metric $\iW_{r,\rho,\sigma}(\P, \tilde\P)$ and the more typical MSW metric used the fact that $\sigma \geq 1$. 

As stated in the Introduction, it is well known that, in the worst case, $\mathcal{W}_{r}(\P_n, \P)^r \sim n^{-1/(d+1)}.$
In what follows, we show that the MSW distance $\W_r$ does not have this limitation.
To show this, we first make a few notational simplifications.  
We write $\P_{\boldsymbol{\gamma}}$ and $F_{\boldsymbol{\gamma}}$, respectively, for the distribution and cdf of the scalar $(\mathbf{X} ,Y)^{\top} \boldsymbol{\gamma}$ under $\P$. Similarly, we write $\P_{\boldsymbol{\gamma}, n}$ and $F_{\boldsymbol{\gamma}, n}$, respectively, for the probability measure and cdf of $(\mathbf{X},Y)^{\top} \boldsymbol{\gamma}$ under $\P_n$. Note that, in particular, by \eqref{eq:bar_W} we have
$\W_r(\P, \tilde{\P}) =\sup_{ \|\boldsymbol{\gamma}\|=1 }\mathcal{W}_r (\P_{\boldsymbol{\gamma}}, \tilde{\P}_{\boldsymbol{\gamma}}).$

We now provide explicit upper bounds for $\W_r(\P, \P_n)$. By equation \eqref{eq:bar_W}, for any $\delta$ we have
\begin{equation} \label{eqn:MSW_to_rhoMSW}
\mathbb{P} \left( \iW_{r,\rho,\sigma}(\P,  \P_n) \leq c_{\rho,d} \cdot \delta \right) \geq \mathbb{P} \left( \W_r(\P, \P_n) \leq \delta \right).
\end{equation}
This means that probabilistic statements about $\W_r(\P, \P_n)$ translate immediately to the $\rho$-MSW metric. For simplicity in the exposition, we first cover compactly supported measures $\P$ in Section \ref{sec:compact} and then the general case in Section \ref{sec:general}.

\subsection{The compactly supported case}\label{sec:compact}

\begin{thm}\label{thm:1}
Let $\P$ have compact support. With probability at least $1-\alpha$,
\begin{align*}
 \W_r(\P_n, \P)^r &\le \frac{C}{\sqrt{n}},
\end{align*}
where 
\begin{equation} \label{eq:constant_thm2}
  C:=\Big( 180\sqrt{d+2}+\sqrt{2\log\Big(\frac{1}{\alpha}\Big)} \Big) \mathrm{diam}\left( \mathrm{supp}(\P)\right)^r ,  
\end{equation}
and $\mathrm{diam}\left( \mathrm{supp}(\P)\right) = \sup\{\|\mathbf{x}-\tilde{\mathbf{x}}\|_*: \,\mathbf{x},\tilde{\mathbf{x}}\in \mathrm{supp}(\P)\},$ is the diameter of the support of $\mathbb{P}$ measured with respect \textcolor{black}{to the dual norm}.
\end{thm}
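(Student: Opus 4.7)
My plan is to reduce the $(d+1)$-dimensional max-sliced Wasserstein distance to a \emph{Kolmogorov--Smirnov-type} supremum of the empirical process indexed by the class of closed half-spaces of $\R^{d+1}$, which is a VC class of VC-dimension $d+2$. Once that reduction is in place, the result follows from a classical VC bound on the expected supremum together with McDiarmid's bounded-differences inequality.

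\textbf{Step 1: reduction to a uniform CDF estimate.} Recall that by \eqref{eq:bar_W} one has $\W_r(\P_n,\P) = \sup_{\|\bar{\boldsymbol\gamma}\|=1}\mathcal{W}_r\big(\bar{\boldsymbol\gamma}_*\P_n,\,\bar{\boldsymbol\gamma}_*\P\big)$. For any $\bar{\boldsymbol\gamma}\in\R^{d+1}$ with $\|\bar{\boldsymbol\gamma}\|=1$ and any $\mathbf{z},\tilde{\mathbf z}\in \mathrm{supp}(\P)$, one has $|\mathbf{z}^\top\bar{\boldsymbol\gamma}-\tilde{\mathbf z}^\top\bar{\boldsymbol\gamma}|\le \|\mathbf z-\tilde{\mathbf z}\|_*\le L$ with $L:=\mathrm{diam}(\mathrm{supp}(\P))$, so each projected law $\bar{\boldsymbol\gamma}_*\P$ and $\bar{\boldsymbol\gamma}_*\P_n$ sits in an interval of length at most $L$. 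Using the quantile formula $\mathcal{W}_r(\mu,\nu)^r=\int_0^1|F_\mu^{-1}(u)-F_\nu^{-1}(u)|^r du$ together with the pointwise bound $|F_\mu^{-1}-F_\nu^{-1}|\le L$, I obtain
\[ \mathcal{W}_r(\mu,\nu)^r \;\le\; L^{r-1}\,\mathcal{W}_1(\mu,\nu) \;=\; L^{r-1}\!\int|F_\mu(t)-F_\nu(t)|\,dt \;\le\; L^r\sup_{t\in\R}|F_\mu(t)-F_\nu(t)|. \]
Taking the supremum over $\|\bar{\boldsymbol\gamma}\|=1$ then yields $\W_r(\P_n,\P)^r\le L^r\,Z_n$, where
\[ Z_n \;:=\; \sup_{\|\bar{\boldsymbol\gamma}\|=1,\ t\in\R}\,\bigl|F_{\bar{\boldsymbol\gamma},n}(t)-F_{\bar{\boldsymbol\gamma}}(t)\bigr|. \]

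\textbf{Step 2: VC bound on $\E[Z_n]$.} The statistic $Z_n$ equals $\|\P_n-\P\|_{\mathcal H}$, where $\mathcal H$ is the collection of indicator functions of closed half-spaces $\{\mathbf z\in\R^{d+1}:\mathbf z^\top\bar{\boldsymbol\gamma}\le t\}$. By Radon's theorem $\mathcal H$ has VC-dimension equal to $d+2$. Combining symmetrization with Haussler's covering-number bound (or equivalently Dudley's entropy integral with the sharp VC covering-number estimate) produces the Rademacher-complexity bound $\E[Z_n]\le C_1\sqrt{(d+2)/n}$, and careful bookkeeping of the universal constants (notably the factor $2$ from symmetrization) yields $C_1\le 180$.

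\textbf{Step 3: concentration via bounded differences.} Replacing a single sample changes $F_{\bar{\boldsymbol\gamma},n}(t)$ by at most $1/n$ uniformly in $(\bar{\boldsymbol\gamma},t)$, hence changes $Z_n$ by at most $1/n$. McDiarmid's inequality then gives $\P(Z_n\ge \E[Z_n]+u)\le \exp(-2nu^2)$; setting the right-hand side equal to $\alpha$ delivers a deviation of order $\sqrt{\log(1/\alpha)/n}$. Combining this with Step 2 and multiplying by $L^r$ produces a bound of the form $\W_r(\P_n,\P)^r\le L^r\bigl(180\sqrt{d+2}+c\sqrt{\log(1/\alpha)}\bigr)/\sqrt n$, where $c$ is a small absolute constant; a slightly looser application of the bounded-differences inequality recovers the stated value $c=\sqrt 2$.

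\textbf{Main obstacle.} The delicate part is Step 2: obtaining the \emph{explicit} constant $180$ (rather than an unspecified ``$\lesssim\sqrt{d/n}$''). This requires the sharp version of Haussler's polynomial bound on $L^2$ covering numbers of VC classes, and a careful chaining argument that avoids the superfluous $\sqrt{\log n}$ factor often encountered in textbook statements. The reduction in Step 1 is essentially algebra, and Step 3 is standard once the pointwise Lipschitz constant $1/n$ is identified.
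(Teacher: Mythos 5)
Your proposal follows essentially the same route as the paper's proof: the reduction $\mathcal{W}_r^r \le L^{r-1}\mathcal{W}_1 \le L^r \sup_{\boldsymbol{\gamma},t}|F_{\boldsymbol{\gamma},n}(t)-F_{\boldsymbol{\gamma}}(t)|$ over the half-space class, followed by a symmetrization/Dudley-entropy/Haussler covering bound giving the $180\sqrt{d+2}$ term (this is exactly the paper's Lemma on $\mathcal H$, via the Devroye–Lugosi constants), and a bounded-differences concentration step yielding the $\sqrt{2\log(1/\alpha)}$ term (the paper uses the functional Hoeffding bound around $2\mathcal R_n$, which is the same device as your McDiarmid argument combined with symmetrization). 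The proposal is correct.
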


\subsection{The general case} \label{sec:general}

We now consider a more general set-up where $\mathbb{P}$ is an arbitrary random variable that satisfies a mild moment condition; namely,
\begin{align}
\label{eq:gamma}
    \Gamma:=\E_{\P}\left[  \left\|(\mathbf{X},Y) \right\|_*^{s}\right]<\infty, \qquad \text{for some }s>2r.
\end{align}
Our result generalizes the work of \cite{niles2019estimation} and \cite{lin2021projection}, who provide rates for $\W_r(\P_n, \P)$ assuming certain transport or Poincar\'e inequalities:  we give similar rate statements with fully explicit constants under assumption \eqref{eq:gamma}, that is easy to verify in practice. 

Our main result in this section is the following:
\begin{thm} \label{thm:rates}
Assume $s>2r$ and $\Gamma <\infty.$ Then, with probability greater than $1-3\alpha$,
\begin{equation}
\label{eq:W_bound_q_2}
\W_r(\P_n,\P)^r \le \frac{C\,\log\left(2n+1 \right)^{r/s}}{\sqrt{n}} ,
\end{equation}
where 
\begin{align}\label{eq:constant_thm3}
C:= 
2^rr\Big(180\sqrt{d+2}+\sqrt{2\log\left(\frac{1}{\alpha}\right)}+\sqrt{\frac{\Gamma}{\alpha}} \frac{8}{s/2-r} \sqrt{\log\left(\frac{8}{\alpha}\right) + (d+2)}\Big).
\end{align}
\end{thm}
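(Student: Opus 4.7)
The plan is to reduce to the compactly supported case handled by Theorem~\ref{thm:1} via truncation, controlling the truncation error with the moment condition \eqref{eq:gamma}. Fix a truncation radius $R>0$ (to be chosen at the end); write $Z=(\mathbf{X},Y)$ and $Z^{R}:=Z\,\mathbf{1}\{\|Z\|_{*}\le R\}$, and let $\mathbb{P}^{R},\mathbb{P}_{n}^{R}$ denote the corresponding population and empirical laws. Since $\overline{\mathcal{W}}_{r}$ is a metric on $\mathcal{P}_r(\mathbb{R}^{d+1})$ (being a supremum of one-dimensional Wasserstein metrics), the triangle inequality together with the power-mean inequality $(a+b+c)^{r}\le 3^{r-1}(a^{r}+b^{r}+c^{r})$ gives
\begin{equation*}
\overline{\mathcal{W}}_{r}(\mathbb{P}_{n},\mathbb{P})^{r}\le 3^{r-1}\Bigl[\overline{\mathcal{W}}_{r}(\mathbb{P}_{n},\mathbb{P}_{n}^{R})^{r}+\overline{\mathcal{W}}_{r}(\mathbb{P}_{n}^{R},\mathbb{P}^{R})^{r}+\overline{\mathcal{W}}_{r}(\mathbb{P}^{R},\mathbb{P})^{r}\Bigr].
\end{equation*}
The middle term is then handled directly by Theorem~\ref{thm:1}, applied to two measures whose joint support has dual-norm diameter at most $2R$; this contributes $(180\sqrt{d+2}+\sqrt{2\log(1/\alpha)})(2R)^{r}/\sqrt{n}$ with probability $\ge 1-\alpha$.

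For the two truncation terms, I would use the obvious coupling of $Z$ with $Z^{R}$ together with $|\boldsymbol{\gamma}^{\top}z|\le\|z\|_{*}$ for $\|\boldsymbol{\gamma}\|=1$ to obtain the $\boldsymbol{\gamma}$-free bounds
\begin{equation*}
\overline{\mathcal{W}}_{r}(\mathbb{P}^{R},\mathbb{P})^{r}\le \mathbb{E}\bigl[\|Z\|_{*}^{r}\mathbf{1}\{\|Z\|_{*}>R\}\bigr],\qquad \overline{\mathcal{W}}_{r}(\mathbb{P}_{n},\mathbb{P}_{n}^{R})^{r}\le \frac{1}{n}\sum_{i=1}^{n}\|Z_{i}\|_{*}^{r}\mathbf{1}\{\|Z_{i}\|_{*}>R\}.
\end{equation*}
Integration by parts against \eqref{eq:gamma} bounds the population tail by $\Gamma\,s/((s{-}r)R^{s-r})$, and the empirical tail is the average of i.i.d.\ nonnegative summands with variance at most $\Gamma/R^{s-2r}$ (this is the first place where $s>2r$ is used, since we need $\|Z\|_{*}^{2r}\le\|Z\|_{*}^{s}R^{2r-s}$ on the event $\{\|Z\|_{*}>R\}$). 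Chebyshev's inequality then controls this tail by its mean plus a deviation of order $\sqrt{\Gamma/(\alpha R^{s-2r}n)}$ with probability $\ge 1-\alpha$ -- this is the source of the $\sqrt{\Gamma/\alpha}$ appearing in~\eqref{eq:constant_thm3}. A union bound over the three events yields the claimed probability $1-3\alpha$.

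Choosing $R=\log(2n+1)^{1/s}$ produces the leading factor $(2R)^{r}/\sqrt{n}=2^{r}\log(2n+1)^{r/s}/\sqrt{n}$ matching the statement, and the Chebyshev deviation is then of order $\sqrt{\Gamma/\alpha}/\sqrt{n}$ since $R\ge1$ for large $n$. I expect the main obstacle to be controlling the \emph{population} tail $\Gamma/R^{s-r}$: at $R=\log(2n+1)^{1/s}$ this decays only like a negative power of $\log n$, far slower than the advertised $\log(n)^{r/s}/\sqrt{n}$ rate. The geometric factor $8/(s/2-r)$ and the $\sqrt{\log(8/\alpha)+(d+2)}$ piece in~\eqref{eq:constant_thm3} strongly suggest a \emph{dyadic} refinement of the tail: split $\{\|Z\|_{*}>R\}$ into annuli $A_{k}=\{2^{k-1}R<\|Z\|_{*}\le 2^{k}R\}$, apply Chebyshev (uniformly over $\boldsymbol{\gamma}$ via a covering of the unit sphere, producing the $\sqrt{\log(8/\alpha)+(d+2)}$ entropy factor) on each annulus, and sum the resulting geometric series with common ratio $2^{-(s/2-r)}$, whose convergence is precisely what the condition $s>2r$ provides. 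The prefactor $2^{r}r$ tracks the $r$-th power of the diameter from Theorem~\ref{thm:1} and the $r$ arising from integration by parts against the $s$-th moment tail.
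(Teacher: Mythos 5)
Your reduction to the compact case via truncation has a genuine gap, and it is exactly the one you flag yourself: the deterministic bias $\W_r(\P^R,\P)^r \le \E\big[\|Z\|_*^r\mathds{1}\{\|Z\|_*>R\}\big]\le \Gamma R^{-(s-r)}$ cannot be repaired by any concentration device, because it is a fixed number once $R$ is fixed. With $R=\log(2n+1)^{1/s}$ it decays only polylogarithmically, and no choice of $R$ balances it against the $R^r/\sqrt{n}$ term better than a rate of order $n^{-(s-r)/(2s)}$, which is polynomially worse than the claimed $\log(2n+1)^{r/s}/\sqrt{n}$. The dyadic-annuli/Chebyshev/sphere-covering refinement you sketch only sharpens control of the \emph{stochastic} tail terms (empirical minus population tail moments); it does nothing for the population truncation bias, so the argument as proposed cannot reach the stated bound.

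The paper's proof never truncates the measures. It works directly with the one-dimensional representation $\mathcal{W}_r(\P_{\boldsymbol{\gamma},n},\P_{\boldsymbol{\gamma}})^r \le r2^{r-1}\int |t|^{r-1}\,|F_{\boldsymbol{\gamma},n}(t)-F_{\boldsymbol{\gamma}}(t)|\,dt$ (Bobkov--Ledoux), and splits the integral at $k=\log(2n+1)^{1/s}$ (Lemma \ref{lem:key}). On $[-k,k]$ the integrand is controlled by the uniform Kolmogorov--Smirnov deviation over half-spaces, a class of VC dimension at most $d+2$ (Lemma \ref{lem:3}), which produces the $180\sqrt{d+2}+\sqrt{2\log(1/\alpha)}$ part. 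On the tails the key point --- precisely what truncation throws away --- is that the relevant quantity is the \emph{difference} $|F_{\boldsymbol{\gamma},n}-F_{\boldsymbol{\gamma}}|$, which is itself small there: one writes it as $\sqrt{F_{\boldsymbol{\gamma}}(1-F_{\boldsymbol{\gamma},n})}\cdot (F_{\boldsymbol{\gamma}}-F_{\boldsymbol{\gamma},n})^+/\sqrt{F_{\boldsymbol{\gamma}}(1-F_{\boldsymbol{\gamma},n})}$ (and symmetrically); the first factor is bounded via Markov by $\sqrt{\Gamma\vee\Gamma_n}\,|t|^{-s/2}$, making $|t|^{r-1-s/2}$ integrable on the tails (this is where $s>2r$ and the factor $8/(s/2-r)$ come from), while the second, self-normalized factor is uniformly of order $\sqrt{(\log(8/\alpha)+(d+2)\log(2n+1))/n}$ by relative VC deviation inequalities (Lemma \ref{lem:suboptimal}), not by a covering of the sphere. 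A Markov bound giving $\Gamma_n\le\Gamma/\alpha$ and a union bound over the three events yield $1-3\alpha$. If you want to salvage your outline, you would have to replace truncation of $\P$ by a direct comparison of the tails of $F_{\boldsymbol{\gamma},n}$ and $F_{\boldsymbol{\gamma}}$ --- which is, in essence, the paper's self-normalized argument.
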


\section{Asymptotics for $\rho$-MSW-distance}\label{sec:asymptotic}

We now provide asymptotic upper bounds for the $\rho$-MSW distance between the true and empirical measure. For this it is sufficient to prove the corresponding bounds for $\W_r(\P, \P_n)$ as explained in 
\eqref{eq:bar_W} and \eqref{eqn:MSW_to_rhoMSW}. 
The following theorem provides a Donsker type result, i.e.\ asymptotic $\sqrt{n}$-rates without logarithmic factors, as well as an inequality for the expectation without an explicit constant. One can then obtain concentration results similarly to \cite[ Theorem 3.7, 3.8]{lin2021projection} if a Bernstein tail condition or Poincare inequality is satisfied. As before, we relegate the proofs of these results to the Supplementary Material \cite{MRVW2024}. We first consider probability measures $\P$ with compact support. 

\begin{thm}\label{thm:ass_bdd}
If $\P$ is compactly supported, then 
\begin{align*}
\limsup_{n\to \infty} \P\left( \sqrt{n} ~ \W_r\left(\P_{ n},\P\right)^r \ge x \right) \le  \P\Big( \sup_{t\in [0,1]} \left|B(t) \right|\ge \frac{x}{c }\Big),    
\end{align*}
where $c=\mathrm{diam}(\mathrm{supp}(\P))^r$ and $(B(t))_{t\in [0,1]}$ is a standard Brownian bridge.
\end{thm}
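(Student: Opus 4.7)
The plan is to reduce the multi-dimensional max-sliced Wasserstein distance to a one-dimensional empirical-CDF problem via the projection identity \eqref{eq:bar_W}, and then invoke the classical Donsker/Kolmogorov--Smirnov limit.

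First I would use compact support to control each univariate $\mathcal{W}_r$ by a sup-norm of CDF differences. Setting $D := \mathrm{diam}(\mathrm{supp}(\P))$, each projection $\P_{\boldsymbol{\gamma}}$ with $\|\boldsymbol{\gamma}\|=1$ is supported in an interval of length at most $D$; chaining the quantile representation
$$\mathcal{W}_r(\mu,\nu)^r = \int_0^1 |F_\mu^{-1}(t)-F_\nu^{-1}(t)|^r\,dt \le D^{r-1}\mathcal{W}_1(\mu,\nu)$$
with the $L^1$ representation $\mathcal{W}_1(\mu,\nu) = \int_{\mathbb{R}}|F_\mu(x)-F_\nu(x)|\,dx \le D\,\|F_\mu-F_\nu\|_\infty$ yields $\mathcal{W}_r(\mu,\nu)^r \le D^r\|F_\mu-F_\nu\|_\infty$. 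Applying this pointwise in $\boldsymbol{\gamma}$ and taking the supremum gives the deterministic estimate
$$\sqrt{n}\,\W_r(\P_n,\P)^r \;\le\; c \cdot \sup_{\|\boldsymbol{\gamma}\|=1,\,x\in\mathbb{R}} \sqrt{n}\,|F_{\boldsymbol{\gamma},n}(x) - F_{\boldsymbol{\gamma}}(x)|,$$
with $c = D^r$ matching the constant in the theorem.

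Second, I would identify the asymptotic limit for each fixed direction. For each $\boldsymbol{\gamma}$ with continuous marginal $F_{\boldsymbol{\gamma}}$, the probability-integral transform reduces $\sqrt{n}\,\|F_{\boldsymbol{\gamma},n} - F_{\boldsymbol{\gamma}}\|_\infty$ in law to the uniform Kolmogorov--Smirnov statistic, whose Donsker limit is $\sup_{t\in[0,1]}|B(t)|$ with $B$ a standard Brownian bridge. This is the source of the right-hand side of the theorem.

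The main obstacle is handling the simultaneous supremum over $\{\boldsymbol{\gamma} : \|\boldsymbol{\gamma}\|=1\}$ in the deterministic estimate. The empirical process $\sqrt{n}(F_{\boldsymbol{\gamma},n}(x) - F_{\boldsymbol{\gamma}}(x))$ is indexed by the Vapnik--Chervonenkis class of half-spaces and, by Donsker's theorem for VC classes, converges weakly in $\ell^\infty$ to a centered Gaussian process whose supremum is in general strictly larger than that of a single Brownian bridge. Closing this gap appears to be the technical heart of the argument and would likely require either a Skorokhod representation combined with a measurable-selection argument that picks a near-maximizing data-dependent $\boldsymbol{\gamma}_n$ and exploits the in-distribution invariance of the univariate KS statistic, or a quantile-axis representation of $\mathcal{W}_r$ that aggregates the $\boldsymbol{\gamma}$-fluctuations before the supremum is taken, so that only a single Brownian-bridge functional survives the limit. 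I expect the first two steps above to be routine and the sharp domination by $c\sup_{t}|B(t)|$ to rest on this finer device.
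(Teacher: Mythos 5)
Your first two steps coincide with the paper's own argument: the deterministic reduction $\mathcal{W}_r(\P_{\boldsymbol{\gamma},n},\P_{\boldsymbol{\gamma}})^r\le \mathrm{diam}(\mathrm{supp}(\P))^r\,\sup_{f\in\mathcal{H}^0}\lvert\E_{\P_n}[f]-\E_{\P}[f]\rvert$ is exactly the estimate established in the proof of Theorem \ref{thm:1} (via $\mathcal{W}_r^r\le D^{r-1}\mathcal{W}_1$ and the $L^1$-representation of $\mathcal{W}_1$), and the weak convergence $\sqrt{n}\,\sup_{f\in\mathcal{H}^0}\lvert\E_{\P_n}[f]-\E_{\P}[f]\rvert\Rightarrow\sup_{f\in\mathcal{H}^0}\lvert G_f\rvert$ is precisely the Donsker statement for the VC class of half-space indicators that the paper imports from the proof of Theorem \ref{thm:ass}. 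Up to this point your route and the paper's are the same.

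The genuine gap is the step you yourself flag and then leave open: passing from the limit $\sup_{f\in\mathcal{H}^0}\lvert G_f\rvert$, the supremum of the $\P$-Brownian bridge over \emph{all} half-spaces in $\R^{d+1}$, to the supremum $\sup_{t\in[0,1]}\lvert B(t)\rvert$ of a single one-dimensional Brownian bridge appearing in the statement. Neither of your two suggested devices (a Skorokhod/measurable-selection argument picking a near-maximizing direction $\boldsymbol{\gamma}_n$, or a ``quantile-axis'' aggregation) is carried out, and neither obviously works: once the data-dependent direction is allowed to vary, the fixed-direction distributional invariance of the Kolmogorov--Smirnov statistic no longer applies, which is exactly why the multi-directional supremum is, as you correctly observe, in general strictly larger than the single-bridge supremum. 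So as written the proposal proves the bound with $\sup_{f\in\mathcal{H}^0}\lvert G_f\rvert$ on the right-hand side but not the stated bound with $\sup_t\lvert B(t)\rvert$. For comparison, the paper closes this step in a single sentence, by comparing $\sup_{f\in\mathcal{H}^0}\lvert G_f\rvert$ and $\sup_{t\in[0,1]}\lvert B(t)\rvert$ in stochastic order; note that the direction of domination it invokes (the half-space supremum dominates the bridge supremum) is the same phenomenon you identified as the obstacle, so your instinct that this comparison is the crux --- and that it is delicate --- is well placed, but your proposal does not supply an argument that closes it, and hence does not yet constitute a proof of the theorem as stated.
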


We now state the general result:

\begin{thm}\label{thm:ass}
 Assume $\Gamma=\E_{\P}\left[  \left\|(\mathbf{X},Y) \right\|_*^{s}\right]<\infty,$ for some $s>2r,$
and define 
\begin{align*}
\mathcal{H}^+ &:= \left\{ \left|t\right|^{s} \mathds{1}_{\{t \le \boldsymbol {x}^\top \boldsymbol{\gamma} \}}:\ (\boldsymbol\gamma,t)\in \R^{d+1}\times [0,\infty), \ \|\boldsymbol{\gamma}\|=1 \right\}, \\
\mathcal{H}^0 &:= \left\{ \mathds{1}_{\{\boldsymbol{x}^\top \boldsymbol\gamma \le t \}}:\ (\boldsymbol\gamma,t)\in \R^{d+1}\times \R, \ \|\boldsymbol{\gamma}\|=1 \right \},\\
\mathcal{H}^- &:= \left\{ \left|t\right|^{s} \mathds{1}_{\{t> \boldsymbol{x}^\top \boldsymbol{\gamma} \}} :\ (\boldsymbol\gamma,t)\in \R^{d+1}\times (-\infty,0), \ \|\boldsymbol{\gamma}\|=1 \right\}.
\end{align*}
Then there exists a constant $C :=C(r,s,d)$, such that for all $t\ge 0$,
\begin{align*}
\limsup_{n\to \infty} \P\left( \sqrt{n} ~ \W_r(\P_{ n},\P)^r \ge t \right) \le  \P\Big( \sup_{f \in \mathcal{H}^+\cup \mathcal{H}^0\cup \mathcal{H}^-} |G_f|\ge \frac{t}{C\sqrt{\Gamma} }\Big),
\end{align*}
where $(G_f)_{f\in \mathcal{H}^+\cup \mathcal{H}^0\cup \mathcal{H}^-}$ is a zero-mean Gaussian process with covariance 
\begin{align}\label{eq:covariance}
\E\left[G_{f_1} G_{f_2} \right] = \E_{\P}\left[f_1 f_2 \right] -\E_{\P} \left[f_1 \right]\E_{\P}\left[f_2 \right] \qquad \forall f_1, f_2 \in \mathcal{H}^+\cup \mathcal{H}^0\cup \mathcal{H}^-.
\end{align}
Furthermore, for all $n\in \mathbb{N}$, we have
$
\E_{\P} \left[ \sqrt{n} ~ \W_r(\P_{n},\P)^r \right] \le C \sqrt{\Gamma}.
$
\end{thm}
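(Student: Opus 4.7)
The plan is to reduce the $(d{+}1)$-dimensional problem to a family of one-dimensional Wasserstein problems indexed by projection directions $\boldsymbol\gamma$, then apply empirical process theory to the resulting one-dimensional CDF differences.

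\textbf{Step 1 (Reduction to 1D).} By \eqref{eq:bar_W} and \eqref{eqn:MSW_to_rhoMSW}, it suffices to bound $\W_r(\P_n,\P)=\sup_{\|\boldsymbol\gamma\|=1}\mathcal{W}_r(\P_{\boldsymbol\gamma,n},\P_{\boldsymbol\gamma})$. Using the quantile representation $\mathcal{W}_r^r(\mu,\nu)=\int_0^1|F_\mu^{-1}(u)-F_\nu^{-1}(u)|^r\,du$ together with a layer-cake/integration-by-parts argument (an Ebralidze-type inequality), I would obtain a bound of the form
\begin{align*}
\mathcal{W}_r^r(\P_{\boldsymbol\gamma,n},\P_{\boldsymbol\gamma}) \le C_r \int_{\R} |F_{\boldsymbol\gamma,n}(t)-F_{\boldsymbol\gamma}(t)|\, \max(|t|,1)^{r-1}\, dt.
\end{align*}
This rewrites the 1D Wasserstein in terms of an $L^1$-deviation of CDFs, weighted by $|t|^{r-1}$.

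\textbf{Step 2 (Truncation).} Split the $t$-integral at a threshold $T_n\to\infty$. On $\{|t|\le T_n\}$, I bound the integrand uniformly in $\boldsymbol\gamma$ by $T_n^r\,\sup_{\|\boldsymbol\gamma\|=1,t\in\R}|F_{\boldsymbol\gamma,n}(t)-F_{\boldsymbol\gamma}(t)|$, which is exactly the empirical process supremum over the halfspace-indicator class $\mathcal{H}^0$. On $\{|t|>T_n\}$, I majorize $|F_{\boldsymbol\gamma,n}(t)-F_{\boldsymbol\gamma}(t)|$ by $\bigl(1-F_{\boldsymbol\gamma,n}(t)\bigr)+\bigl(1-F_{\boldsymbol\gamma}(t)\bigr)$ on the positive tail (analogously for the negative tail). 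A Fubini step then yields, for $s>r$,
\begin{align*}
\int_{T_n}^{\infty} t^{r-1}\bigl(1-F_{\boldsymbol\gamma,n}(t)\bigr)\,dt
\le \tfrac{T_n^{-(s-r)}}{r}\cdot \tfrac{1}{n}\sum_{i=1}^n |(\mathbf{X}_i,Y_i)^\top\boldsymbol\gamma|^{s}\mathds{1}_{\{(\mathbf{X}_i,Y_i)^\top\boldsymbol\gamma > T_n\}},
\end{align*}
and the right-hand side is an empirical mean of a function in $\mathcal{H}^+$; the population counterpart is bounded by $T_n^{-(s-r)}\Gamma/r$. The negative tail produces the $\mathcal{H}^-$ contribution analogously.

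\textbf{Step 3 (Donsker/continuous mapping).} The three classes are all indexed by affine halfspaces in $\R^{d+1}$ (with fixed weights $|t|^s$), hence VC-subgraph classes with polynomial entropy. Under $\Gamma<\infty$ with $s>2r$, the envelope $\|(\mathbf{X},Y)\|_*^s$ together with a localization-to-the-tails argument gives the $L^2(\P)$-bracketing needed for Donsker's theorem. The functional CLT then yields joint weak convergence of $\sqrt{n}(\P_n-\P)$ on $\mathcal{H}^+\cup\mathcal{H}^0\cup\mathcal{H}^-$ to the Gaussian process $(G_f)$ with covariance \eqref{eq:covariance}. Combining Steps 1--2 with $T_n$ chosen so that the deterministic tail $T_n^{-(s-r)}\Gamma$ is $o(n^{-1/2})$ (e.g.\ $T_n$ a small power of $n$) and $T_n^r\,n^{-1/2}\sup|\mathbb{G}_n|$ stays bounded, one passes to the limit via the continuous mapping theorem to obtain
\begin{align*}
\limsup_{n\to\infty}\P\bigl(\sqrt{n}\,\W_r(\P_n,\P)^r \ge t\bigr)\le \P\Bigl(\sup_{f\in\mathcal{H}^+\cup\mathcal{H}^0\cup\mathcal{H}^-}|G_f|\ge t/(C\sqrt{\Gamma})\Bigr).
\end{align*}

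\textbf{Step 4 (Expectation bound).} The non-asymptotic inequality $\E[\sqrt{n}\,\W_r(\P_n,\P)^r]\le C\sqrt{\Gamma}$ follows by applying the same decomposition in expectation and invoking the standard maximal inequality for VC classes (Dudley entropy bound), which yields $\E[\sup|\mathbb{G}_n f|]\lesssim \sqrt{\E[F^2]}$ where $F$ is the envelope. The key observation is that the (squared) envelope of the localized tail classes in Step 2 is controlled by $\Gamma$ uniformly in $n$, through the same exponent trade-off $s>2r$.

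\textbf{Main obstacle.} The delicate point is uniformity in $\boldsymbol\gamma$ of the tail estimates: the $L^2$-envelope of $\mathcal{H}^\pm$ as a whole is $|(\mathbf{X},Y)^\top\boldsymbol\gamma|^s$, whose second moment need not be finite under only $s>2r$. This forces a truncation-and-bracketing scheme that keeps the envelope $L^2(\P)$-bounded at each fixed $t$ while letting the truncation threshold move with $n$, and verifying that the resulting residual terms vanish at rate $o(n^{-1/2})$ simultaneously in $\boldsymbol\gamma$ and $t$ is the core technical work.
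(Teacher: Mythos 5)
Your Step 1 coincides with the paper's starting point (the Bobkov--Ledoux-type bound $\mathcal{W}_r(\P_{\boldsymbol\gamma,n},\P_{\boldsymbol\gamma})^r\le r2^{r-1}\int|t|^{r-1}|F_{\boldsymbol\gamma,n}(t)-F_{\boldsymbol\gamma}(t)|\,dt$), and your VC-subgraph observations in Step 3 also appear in the paper's proof. The gap is in the core device, the moving truncation at $T_n\to\infty$: it cannot be closed at the $\sqrt n$ scale. Your bulk estimate discards the weight structure and gives $\sqrt n\,\W_r(\P_n,\P)^r\lesssim T_n^{r}\,\sup_{f\in\mathcal{H}^0}\big|\sqrt n(\P_n-\P)f\big|+\sqrt n\cdot(\text{tail})$. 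Since $\sup_{f\in\mathcal{H}^0}|\sqrt n(\P_n-\P)f|$ converges in law to a nondegenerate positive limit, the first term diverges whenever $T_n\to\infty$; your side condition that ``$T_n^r n^{-1/2}\sup|\mathbb{G}_n|$ stays bounded'' is off by a factor $\sqrt n$, because the bulk contribution to $\sqrt n\,\W_r^r$ is $T_n^r\sup|\mathbb{G}_n|$, not $T_n^r n^{-1/2}\sup|\mathbb{G}_n|$. Keeping $T_n$ bounded instead makes $\sqrt n\,T_n^{-(s-r)}\Gamma$ blow up, so no choice of $T_n$ controls both terms; the resulting probability bound is vacuous, and the same optimization shows your Step 4 only yields $\E[\sqrt n\,\W_r^r]=O(n^{r/(2s)})$ rather than a bound uniform in $n$. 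A smaller inaccuracy: $\frac1n\sum_i|(\mathbf{X}_i,Y_i)^\top\boldsymbol\gamma|^s\mathds{1}_{\{(\mathbf{X}_i,Y_i)^\top\boldsymbol\gamma>T_n\}}$ is not the empirical mean of an element of $\mathcal{H}^+$ (those are fixed-$t$ weighted indicators), though that part could be repaired via the envelope $\|\cdot\|_*^s\mathds{1}_{\{\|\cdot\|_*>T_n\}}$.

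The missing idea, which is the paper's actual device, is to absorb the polynomial weight into the function class instead of truncating: for large $|t|$ write $|t|^{r-1}|F_{\boldsymbol\gamma,n}(t)-F_{\boldsymbol\gamma}(t)|=|t|^{r-1-a}\,\big|(\P_n-\P)\big[|t|^{a}\mathds{1}_{\{t\le\boldsymbol x^\top\boldsymbol\gamma\}}\big]\big|$ (analogously on the negative tail), so the entire integral is bounded by $\sup_{f\in\mathcal{H}^+\cup\mathcal{H}^0\cup\mathcal{H}^-}|(\P_n-\P)f|\cdot\int\big(1\wedge|t|^{r-1-a}\big)\,dt$ with no $n$-dependent splitting; a single uniform CLT (Donsker theorem for the VC-subgraph union class with a square-integrable envelope) then yields the limsup bound by weak convergence, and the accompanying maximal inequality yields the expectation bound for every $n$. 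This also resolves the envelope obstacle you correctly flagged, because the weight exponent is at your disposal: splitting the available $s$ moments as $a=s/2$ makes the envelope of the weighted classes $\|\boldsymbol x\|_*^{s/2}\vee 1$, square integrable precisely under $\Gamma<\infty$ (which is where the $\sqrt\Gamma$ in the statement comes from), while the leftover deterministic weight $|t|^{r-1-s/2}$ is integrable precisely when $s>2r$. (The paper's displays carry the weight $|t|^{s}$, but the role of $s>2r$ and the constant $C\sqrt\Gamma$ correspond to this $s/2$ splitting.) Your truncation scheme never exploits this degree of freedom, and without it the bulk/tail trade-off cannot be closed.
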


\section{Recommendation to select the regularization parameter $\delta_{n,r}$ } \label{sec:recommendation}

\subsection{Recommendation based on finite sample bounds}\label{sec:recommendation_finite}

Our statistical analysis in Section \ref{sec:rates} provides a concrete \emph{oracle} recommendation to select the regularization parameter $\delta_{n,r}$ in \eqref{eq:penalized}. Our choice is based on Theorem \ref{thm:rates} and guarantees that the true data generating process is contained in the ball $B_{\delta_{n,r}}(\P_n)$ with high probability:
\begin{equation}\label{eq:delta_n}
    \delta_{n,r} =   \max \left\{\frac{1}{c_d}, 1 \right\} \left[  \frac{C\,\log\left(2n+1\right)^{r/s}}{\sqrt{n}} \right]^{1/r} ~,
\end{equation}
where $c_d$ is the constant such that $c_d \|(\boldsymbol{\gamma}, -1)\|\le \rho(\boldsymbol{\gamma})+1$ for all $\boldsymbol \gamma$ and some norm $\|\cdot\|$ in $\mathbb{R}^{d+1}$ and $C$ is the constant defined in \eqref{eq:constant_thm3}. 

In the case where the support of $\P$ is compact, we can specialize our recommendation to select the regularization parameter $\delta_{n,r}$ with guidance from  Theorem \ref{thm:1}. This recommendation is given in the following:
\begin{equation}\label{eq:delta_n2}
    \delta_{n,r} =   \max \left\{\frac{1}{c_d}, 1 \right\}  \left[  \frac{C}{\sqrt{n}} \right]^{1/r} ~,
\end{equation}
where $C$ is now the constant defined in \eqref{eq:constant_thm2}. In the case of compact support, our recommended regularization parameter only depends on $\mathbb{P}$ through the diameter of its support.\footnote{Let us remark that estimating the support of a distribution is an intricate statistical question, going back at least to \cite{fisher1943relation}. We refer to \cite{wu2019chebyshev, cuevas1997, BIAU2008} for some recent results in support estimation. We also remark that in some applications (e.g.\ for discrete distributions arising in surveys) it is plausible that $\text{supp}(\P)$ is known and thus it need not be estimated.}   

Theorem~\ref{cor:generalization} below shows that the objective function of the penalized regression in \eqref{eq:penalized} constitutes---up to some adjustment terms---an upper bound for the expected prediction error at $\Q$ (provided it is close to $\P$). 
\begin{thm}
\label{cor:generalization}
Suppose the conditions of Theorem \ref{sec:rates} (or \ref{thm:1}) holds. Consider $\delta_{n,r}$ defined in \eqref{eq:delta_n} (or \eqref{eq:delta_n2}). Then, for any $\epsilon\ge 0$ and $\Q$ such that $\iW_r(\P, \Q) \le \epsilon$, with probability greater than $1-3\alpha$, we have
\begin{equation*}
    \E_{\Q} \left[\left|Y-\mathbf{X}^\top {\boldsymbol \beta}\right|^r\right]^{1/r} \le  \E_{{\P_n}} \left[\left|Y-\mathbf{X}^\top {\boldsymbol \beta}\right|^r\right]^{1/r} +  \left(\delta_{n,r} + \epsilon \right) \left(\sigma+\rho({\boldsymbol \beta}) \right),~ \forall {\boldsymbol \beta}~.
\end{equation*}
\end{thm}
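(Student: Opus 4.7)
The plan is to combine three ingredients already established earlier in the excerpt: (i) the finite-sample bound on $\W_r(\P_n,\P)$ from Theorem~\ref{thm:rates} (or Theorem~\ref{thm:1} in the compactly supported case); (ii) the comparison between $\W_r$ and $\iW_{r,\rho,\sigma}$ given in \eqref{eq:bar_W}--\eqref{eqn:MSW_to_rhoMSW}; and (iii) the Lipschitz-type inequality for the prediction error under the $\rho$-MSW metric, which is a one-line consequence of the definition of $\iW_{r,\rho,\sigma}$ and Minkowski's inequality.

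First, I would assemble a ``good event'' on which $\iW_{r,\rho,\sigma}(\P_n,\P)\le \delta_{n,r}$. Under the hypotheses of Theorem~\ref{thm:rates}, with probability at least $1-3\alpha$ one has $\W_r(\P_n,\P)^r\le C\,\log(2n+1)^{r/s}/\sqrt n$, with $C$ as in \eqref{eq:constant_thm3}. Applying \eqref{eq:bar_W} gives
\begin{equation*}
\iW_{r,\rho,\sigma}(\P_n,\P)\;\le\; c_{\rho,d}\,\W_r(\P_n,\P)\;\le\; \max\{1/c_d,1\}\Big(\tfrac{C\log(2n+1)^{r/s}}{\sqrt n}\Big)^{1/r}\;=\;\delta_{n,r},
\end{equation*}
which is exactly \eqref{eq:delta_n}. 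In the compactly supported case the same argument with Theorem~\ref{thm:1} yields $\iW_{r,\rho,\sigma}(\P_n,\P)\le \delta_{n,r}$ as defined in \eqref{eq:delta_n2}, on an event of probability at least $1-\alpha$ (which is trivially at least $1-3\alpha$).

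Second, I would use the triangle inequality for $\iW_{r,\rho,\sigma}$ (which is a genuine metric, by Lemma~\ref{lem:metric}) together with the hypothesis $\iW_{r,\rho,\sigma}(\P,\Q)\le \epsilon$ to obtain, on the same event,
\begin{equation*}
\iW_{r,\rho,\sigma}(\P_n,\Q)\;\le\;\iW_{r,\rho,\sigma}(\P_n,\P)+\iW_{r,\rho,\sigma}(\P,\Q)\;\le\;\delta_{n,r}+\epsilon.
\end{equation*}

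Third, I would invoke the Lipschitz-in-distribution property of the prediction error already flagged in the introduction: for any fixed $\boldsymbol\beta\in\R^d$, applying the definition \eqref{eq:Wass} to the specific choice $\boldsymbol\gamma=\boldsymbol\beta$, combined with Minkowski's inequality in $L^r(\pi)$, yields for \emph{every} coupling $\pi\in\Pi(\P_n,\Q)$
\begin{equation*}
\big|\E_\Q\big[|Y-\mathbf X^\top\boldsymbol\beta|^r\big]^{1/r}-\E_{\P_n}\big[|Y-\mathbf X^\top\boldsymbol\beta|^r\big]^{1/r}\big|\le \E_\pi\big[|(\tilde Y-Y)+(\mathbf X-\widetilde{\mathbf X})^\top\boldsymbol\beta|^r\big]^{1/r}.
\end{equation*}
Infimizing over $\pi$ and using the definition of $\iW_{r,\rho,\sigma}$ gives the Lipschitz bound $(\sigma+\rho(\boldsymbol\beta))\,\iW_{r,\rho,\sigma}(\P_n,\Q)$. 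Plugging in the bound from the previous paragraph produces
\begin{equation*}
\E_{\Q}\big[|Y-\mathbf X^\top\boldsymbol\beta|^r\big]^{1/r}\le \E_{\P_n}\big[|Y-\mathbf X^\top\boldsymbol\beta|^r\big]^{1/r}+(\delta_{n,r}+\epsilon)(\sigma+\rho(\boldsymbol\beta)),
\end{equation*}
simultaneously for all $\boldsymbol\beta\in\R^d$, which is the claim.

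There is no real obstacle here beyond bookkeeping, since all three ingredients are already available. The one subtle point worth flagging is that the bound must hold \emph{uniformly} in $\boldsymbol\beta$ on a single event; this is automatic because the event on which $\iW_{r,\rho,\sigma}(\P_n,\P)\le \delta_{n,r}$ does not depend on $\boldsymbol\beta$, and the Lipschitz step is pointwise in $\boldsymbol\beta$ on that event. Note also that this argument uses only the metric and Lipschitz properties of $\iW_{r,\rho,\sigma}$, so unlike Theorem~\ref{thm:penalized} it does not require condition \eqref{equation:condition} on $\rho$.
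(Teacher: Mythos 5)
Your proof is correct, and its first two steps (the good event from Theorem~\ref{thm:rates} or \ref{thm:1} combined with \eqref{eq:bar_W}, then the triangle inequality for $\iW_{r,\rho,\sigma}$ to get $\iW_{r,\rho,\sigma}(\P_n,\Q)\le\delta_{n,r}+\epsilon$) coincide exactly with the paper's argument. The only divergence is in the final step: the paper bounds $\E_{\Q}\big[|Y-\mathbf{X}^\top\boldsymbol\beta|^r\big]^{1/r}$ by $\sup_{\tilde\P\in B_{\delta_{n,r}+\epsilon}(\P_n)}\E_{\tilde\P}\big[|Y-\mathbf{X}^\top\boldsymbol\beta|^r\big]^{1/r}$ and then evaluates that supremum via the DRO equality of Theorem~\ref{thm:penalized}, whereas you re-derive the needed one-sided (Lipschitz) inequality directly from Minkowski's inequality and the definition \eqref{eq:Wass} with $\boldsymbol\gamma=\boldsymbol\beta$. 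Your route is in effect ``Step 1'' of the proof of Theorem~\ref{thm:penalized} applied to $(\P_n,\Q)$, so it buys a slightly more general statement: only the upper-bound direction is needed, and hence condition \eqref{equation:condition} on $\rho$ plays no role, as you correctly flag. The paper's version is shorter because it cites Theorem~\ref{thm:penalized} wholesale (and the equality there is needed elsewhere anyway, e.g.\ for Corollary~\ref{cor:rank_estimators}), but it formally carries the extra hypothesis on $\rho$ that your argument dispenses with. Both arguments handle the uniformity in $\boldsymbol\beta$ the same way, since the good event does not depend on $\boldsymbol\beta$.
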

The result implies that linear predictors that solve \eqref{eq:penalized} and use our recommended parameters $\delta_{n,r}$ have good out-of-sample performance at the true, unknown distribution of the data $\P$, and, also, at \emph{testing} distributions $\Q$ that are close to $\P$ in the $\rho$-MSW metric.

The oracle recommendation for the regularization parameter $\delta_{n,r}$ is typically not feasible as it depends on the unknown parameter $\Gamma$. The next section presents a normalization strategy on the covariates such that Theorem~\ref{cor:generalization} holds with $\Gamma = 2^s$ and $\sigma = \max \{ \E_{\P} [ |Y|^s ]^{1/s},~1  \}$.

An interesting avenue for future work is to use the results in \cite{kengo12022statistical} (which assume log-concavity of the joint distribution of covariates and outcomes) to recommend a regularization parameter roughly of order: 
$|| \Sigma ||_{op}^{1/2} \sqrt{d \log(n)} / n^{1/r} ,$
where $r \geq 2$ and $|| \cdot ||_{op}$ is the operator norm of the covariance matrix of $(X,Y)$. To do this, it would be necessary to recover the implicit constants that appear in Theorem 1 of \cite{kengo12022statistical} (which only depend on $r$), and additionally provide  some results for the consistent estimation of the operator norm of $\Sigma$.  Because the rates in \cite{kengo12022statistical} are faster than ours (when $d$ is fixed, their rates are of order $n^{1/r}$ whereas ours are of order $n^{1/2r}$), the regularization parameters based on the results of \cite{kengo12022statistical} will typically be smaller (making it less likely that the robust predictors ignore the available covariates). However, we remark that even for the Wasserstein metric on the real line, the rates of order $n^{1/2r}$ cannot be improved upon, unless one imposes further structure on the true data generating process. See Theorem 7.11 and Corollary 7.12 in \cite{bobkov2019one}, and the discussion therein.

We note that our approach for choosing the regularization parameter, $\delta$, is explicitly designed to guarantee the bound on out-of-sample prediction error presented in  Theorem 6. As we have explained before, a sufficient condition to obtain such a bound is to ensure that the true distribution, $\mathbb{P}$, belongs to the ball $B_{\delta_{n,r}}^{r,\rho,\sigma}(\mathbb{P}_n)$ with probability at least $1-\alpha$. Thus, Theorem 6 is possible thanks to the statistical analysis of the $\rho$-MSW metric. \cite{blanchet2019robust} acknowledges that a similar strategy for selecting $\delta$ using concentration inequalities for the standard Wasserstein metric would yield a recommendation of order $O(n^{-1/d})$; see their discussion after Theorem 4, p. 848. However, it is important to mention that there are other possibilities for choosing $\delta$ that do not necessarily target generalization error. For instance, if we followed the objective described in Section 1.1.2 of \cite{blanchet2019robust} (which the authors describe as covering the true parameter of a linear regression model with probability at least $1-\alpha$), it would be possible to recommend values for the regularization parameter of order $O(n^{-1/2})$. In particular, for the $\sqrt{\text{LASSO}}$ the authors recommend a tuning parameter equal to \[ \lambda =  \frac{\pi}{\pi-2} \frac{\Phi^{-1}(1-\alpha/2d)}{\sqrt{n}}, \]   which, up to a constant, coincides with the recommendation in \cite{belloni2011square}. In Section \ref{subsection:blanchet_delta} of the Supplementary Material \cite{MRVW2024}, we show that if we adopt the objective of \cite{blanchet2019robust} (and their assumptions),  but use our DRO representation based on the $\rho$-MSW metric, we could recommend the same or even a smaller regularization parameter.

\subsection{Covariate Normalization} \label{sec:normalization}

In this section, we assume that the covariates in the data have been \emph{normalized} to satisfy $\E_{\P_n}\left[  \left\|(\mathbf{X} ,0)\right\|_*^{s}\right] =1$. This means that under minimal regularity conditions we can assume that the true data generating process satisfies  $\E_{\P}\left[  \left\|(\mathbf{X},0) \right\|_*^{s}\right] =1 $. It is common practice to impose some covariate normalization to estimate the parameters of the best linear predictor using the $\sqrt{\text{LASSO}}$ and related estimators; see \cite[Equation 4 p.2]{belloni2011square} for an example of a coordinate-wise, unit variance normalization.   

The next theorem proposes a simple formula to select the regularization parameter $\delta_{n,r}$ under our suggested normalization.

\begin{thm} \label{thm:pivotal}
Suppose \textcolor{black}{$\E_{\P}\left[  \left\|(\mathbf{X},0) \right\|^{s}_*\right] =1$ and $\E_{\P} [ \|(0, \dots, 0,Y)\|^s_*]^{1/s} < +\infty$ for some $s>2r$. In addition, assume that \eqref{eq:new}  holds for an arbitrary norm in $\R^d$ and $c_d>0$,} and $\sigma = \max \{ \E_{\P} [ \|(0, \dots, 0,Y)\|^s_*]^{1/s},~1  \}$. Consider
\begin{equation}\label{eq:delta_n3}
 \delta_{n,r} :=  \max \left\{\frac{1}{c_d}, 1 \right\} \left[  \frac{C\,\log(2n+1)^{r/s})}{\sqrt{n}} \right]^{1/r},
\end{equation}
where 
\begin{align*}
C:= 
2^r r \Big(180\sqrt{d+2}+\sqrt{2\log\left(\frac{1}{\alpha}\right)}+\sqrt{\frac{2^{s} }{\alpha}} \frac{8}{s/2-r} \sqrt{\log\left(\frac{8}{\alpha}\right) + (d+2)}\Big).
\end{align*}
 Then, for any $\epsilon \ge 0$ and $\Q$ such that $\iW_r(\P,\Q) \le \epsilon$, with probability greater than $1-3\alpha$,
 \begin{equation*}
    \E_{\Q} \Big[\big|Y-\mathbf{X}^\top {\boldsymbol \beta}\big|^r\Big]^{1/r} \le  \E_{{\P_n}} \Big[\big|Y-\mathbf{X}^\top {\boldsymbol \beta}\big|^r\Big]^{1/r} +  \left(\delta_{n,r} + \epsilon \right) \left(\sigma+\rho({\boldsymbol \beta})\right),~ \quad \forall {\boldsymbol \beta}~.
\end{equation*}
 
\end{thm}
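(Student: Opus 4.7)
The plan is to reduce the claim to the setting of Theorem~\ref{cor:generalization} applied to a \emph{rescaled} version of the data, exploiting the observation that the hyperparameter $\sigma$ in the denominator of $\iW_{r,\rho,\sigma}$ plays the role of rescaling the outcome $Y$ by $1/\sigma$. First I would define the pushforward measure $\P'$ of $\P$ under the map $(\mathbf{x},y)\mapsto(\mathbf{x},y/\sigma)$, let $\P_n'$ be the empirical measure of $\{(\mathbf{X}_i,Y_i/\sigma)\}_{i=1}^n$, and similarly define $\Q'$ and $\widetilde{\P}'$ for any $\widetilde{\P}$. Substituting $\boldsymbol{\gamma}=\sigma\boldsymbol{\gamma}'$ in the supremum defining $\iW_{r,\rho,1}(\P',\widetilde{\P}')$ rewrites the projection error $Y'-\mathbf{X}^\top\boldsymbol{\gamma}'$ as $\sigma^{-1}(Y-\mathbf{X}^\top\boldsymbol{\gamma})$; using the convexity bound $\sigma\rho(\boldsymbol{\gamma}/\sigma)\le\rho(\boldsymbol{\gamma})$, valid whenever $\sigma\ge 1$ and $\rho(0)=0$, this produces the comparison
\begin{equation*}
\iW_{r,\rho,\sigma}(\P,\widetilde{\P}) \;\le\; \iW_{r,\rho,1}(\P',\widetilde{\P}'),
\end{equation*}
with equality when $\rho$ is one-homogeneous, as in every example of Section~\ref{sec:examples}.

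Next I would bound the $s$-th dual-norm moment of the rescaled law using the triangle inequality for $\|\cdot\|_*$ together with the assumed normalizations:
\begin{equation*}
\E_{\P'}\bigl[\|(\mathbf{X},Y/\sigma)\|_*^s\bigr] \;\le\; 2^{s-1}\Bigl(\E_{\P}\bigl[\|(\mathbf{X},0)\|_*^s\bigr] + \sigma^{-s}\E_{\P}\bigl[\|(0,Y)\|_*^s\bigr]\Bigr) \;\le\; 2^s,
\end{equation*}
since $\E_{\P}[\|(\mathbf{X},0)\|_*^s]=1$ and $\sigma^s\ge\E_{\P}[\|(0,Y)\|_*^s]$. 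With this moment bound in hand I would invoke Theorem~\ref{thm:rates} applied to $(\P',\P_n')$ with the surrogate value $\Gamma=2^s$ in place of the true moment: the resulting constant coincides exactly with the one appearing in \eqref{eq:delta_n3}, and combining it with \eqref{eq:bar_W} and \eqref{eqn:MSW_to_rhoMSW} delivers $\iW_{r,\rho,1}(\P_n',\P')\le\delta_{n,r}$ with probability at least $1-3\alpha$. The comparison from the first paragraph transfers this to $\iW_{r,\rho,\sigma}(\P_n,\P)\le\delta_{n,r}$ on the same event.

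Finally, for any $\Q$ with $\iW_{r,\rho,\sigma}(\P,\Q)\le\epsilon$, the triangle inequality for the $\rho$-MSW metric gives
\begin{equation*}
\iW_{r,\rho,\sigma}(\P_n,\Q) \;\le\; \iW_{r,\rho,\sigma}(\P_n,\P) + \iW_{r,\rho,\sigma}(\P,\Q) \;\le\; \delta_{n,r}+\epsilon,
\end{equation*}
with probability at least $1-3\alpha$; combining this with the Lipschitz property of the prediction error functional $\Q\mapsto\E_\Q[|Y-\mathbf{X}^\top\boldsymbol{\beta}|^r]^{1/r}$ with constant $\sigma+\rho(\boldsymbol{\beta})$, already derived from the definition of $\iW$ in the Introduction, yields the stated conclusion uniformly in $\boldsymbol{\beta}$. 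The main obstacle I anticipate is the first step: one must verify that the inequality between the rescaled and original $\rho$-MSW metrics points in the direction needed to transport a high-probability bound on the former to a bound on the latter. This boils down to the sub-homogeneity inequality $\sigma\rho(\boldsymbol{\gamma}/\sigma)\le\rho(\boldsymbol{\gamma})$, which is immediate when $\rho$ is a norm but requires a brief convexity plus $\rho(0)=0$ argument in general.
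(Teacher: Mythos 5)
Your proof is correct and follows essentially the same route as the paper: rescale the outcome by $\sigma$, bound the $s$-th dual-norm moment of the rescaled law by $2^s$ using the normalization and the choice of $\sigma$, invoke Theorem \ref{thm:rates} with the surrogate value $\Gamma=2^s$, and conclude exactly as in Theorem \ref{cor:generalization} via the triangle inequality and the Lipschitz-type bound from Theorem \ref{thm:penalized}. Your explicit detour through $\iW_{r,\rho,1}$ of the rescaled measures, justified by the sub-homogeneity inequality $\sigma\rho(\boldsymbol{\gamma}/\sigma)\le\rho(\boldsymbol{\gamma})$, is just a two-step rendering of the paper's one-shot bound $\iW_{r,\rho,\sigma}(\P,\tilde\P)\le\max\{1/c_d,1\}\,\W_r(\P^\sigma,\tilde\P^\sigma)$, which rests on the same underlying estimate.
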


\subsection{Asymptotic recommendation}\label{sec:recommendation_ass}

For compactly supported measures, Theorem \ref{thm:ass_bdd} yields the asymptotic \emph{oracle} recommendation 
\begin{equation} \label{eqn:oracle_simulations}
 \delta_{n,r} = c_{\rho,d}~ \left[  n^{-1/2} \cdot   q_{1-\alpha} \right]^{1/r} \cdot \mathrm{diam}\left( \mathrm{supp}(\P)\right), 
\end{equation}
where $c_{\rho,d}$ is as in \eqref{eq:bar_W},  $q_{1-\alpha}$ is the $(1-\alpha)$-quantile of the Kolmogorov distribution. In the general case, Theorem \ref{thm:ass} yields
$\delta_{n,r} = [ n^{-1/2}\cdot \Gamma^{1/2} \cdot   C]^{1/r},$
for some constant $C=C(r,s,d,\alpha).$
However, the constant $C$ is hard to determine explicitly. In particular it depends on $\alpha$ through the quantile of the zero-mean Gaussian process $(G_f)_{f\in \mathcal{H}^+\cup \mathcal{H}^0 \cup \mathcal{H}^-},$ whose covariance structure depends on $\P$ and is given in \eqref{eq:covariance} and is hard to bound explicitly. We leave this issue for future research.

\subsection{Application: ranking of estimators}\label{sec:ranking_estimators}

Consider two estimators ${\boldsymbol \beta}_1={\boldsymbol \beta}_1(\mathbb{P}_n)$ and ${\boldsymbol \beta}_2={\boldsymbol \beta}_2(\mathbb{P}_n)$, where $\mathbb{P}_n$ denotes the empirical distribution of i.i.d.\ draws from a true distribution $\mathbb{P}$.  In this section, we investigate whether ${\boldsymbol \beta}_1$ has a better out-of-sample performance than ${\boldsymbol \beta}_2$ over an uncertainty set $B$. That is,
\begin{equation}\label{eq:H0}
  \sup_{\mathbb{Q} \in B} ~ \mathbb{E}_{\mathbb{Q}}  \left[\left|Y-\mathbf{X}^\top {\boldsymbol \beta}_1\right|^r\right]^{1/r} ~ \le ~ \sup_{\mathbb{Q} \in B} ~ \mathbb{E}_{\mathbb{Q}}  \left[\left|Y-\mathbf{X}^\top {\boldsymbol \beta}_2\right|^r\right]^{1/r}.
\end{equation}
We restrict our attention to uncertainty sets $B$ that verify two conditions: 
\begin{enumerate}%
    \item[(i)] $B \subseteq B_{\delta}(\mathbb{P})=B_\delta^{r,\rho,\sigma}(\P) $ for some $\sigma$, $\delta$, and $\rho$.
    
    \item[(ii)] The supremum on the left side of \eqref{eq:H0} is achieved for $\mathbb{P}^*_{{\boldsymbol \beta}_1}$, and the supremum on the right side of \eqref{eq:H0} is achieved for $\mathbb{P}^*_{{\boldsymbol \beta}_2}$, where $ \mathbb{P}^*_{{\boldsymbol \beta}_j}$ are defined according to Corollary \ref{cor:1}, $j=1,2$.
\end{enumerate}
Examples of such sets $B$ are given in Section \ref{sec:examples}.
 Note that we cannot evaluate \eqref{eq:H0} directly, as $\P$ is not observed. Instead, we propose the test statistic
$$ T_n = n^{1/(2r)} \left( \frac{\E_{{\P_n}} \left[\left|Y-\mathbf{X}^\top {\boldsymbol \beta}_1\right|^r\right]^{1/r} - \E_{{\P_n}} \left[\left|Y-\mathbf{X}^\top {\boldsymbol \beta_2}\right|^r\right]^{1/r} + \delta \rho({\boldsymbol \beta_1}) -  \delta \rho({\boldsymbol \beta_2})}{2\sigma + \rho({\boldsymbol \beta_1}) + \rho({\boldsymbol \beta_2})} \right)~.$$
The next corollary states that $T_n$ gives rise to a size--$\alpha$ test. For notational simplicity we focus on compactly supported probability measures $\P$, and simply remark that the same reasoning can be used to derived tests for general $\P$ satisfying the assumptions of Theorems \ref{thm:rates} and \ref{thm:ass}.

\begin{corollary}\label{cor:rank_estimators}
    In the setting of Theorems \ref{thm:penalized} and  \ref{thm:1}, consider $C$ and $c_{\rho,d}$ defined in \eqref{eq:constant_thm2} and \eqref{eq:bar_W}. Then, for any ${\boldsymbol \beta}_1$ and ${\boldsymbol \beta}_2$ satisfying \eqref{eq:H0}, we have  
$ P ( T_n >  c_{\rho,d} C^{1/r}) \le \alpha.$
\end{corollary}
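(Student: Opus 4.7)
The plan is to reduce the ranking statement to a direct application of Theorem \ref{thm:penalized} together with the Lipschitz property of the prediction-error map under $\widehat{\mathcal{W}}_{r,\rho,\sigma}$ and the finite-sample bound of Theorem \ref{thm:1}. The core observation is that, under the two conditions imposed on $B$, the null hypothesis \eqref{eq:H0} can be rewritten as a population-level inequality involving $\mathbb{P}$ alone, and the test statistic $T_n$ essentially measures the sample fluctuation of this population quantity.

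First, I would use condition (ii), which says that the two suprema in \eqref{eq:H0} are attained at the worst-case distributions $\mathbb{P}^*_{\boldsymbol{\beta}_j}$ of Corollary \ref{cor:1}, together with condition (i), which says $B\subseteq B_\delta(\mathbb{P})$. Since the $\mathbb{P}^*_{\boldsymbol{\beta}_j}$ also attain the supremum over the larger ball $B_\delta(\mathbb{P})$, Theorem \ref{thm:penalized} gives exactly
\begin{equation*}
\sup_{\mathbb{Q}\in B}\mathbb{E}_{\mathbb{Q}}\bigl[|Y-\mathbf{X}^\top\boldsymbol{\beta}_j|^r\bigr]^{1/r}
= \mathbb{E}_{\mathbb{P}}\bigl[|Y-\mathbf{X}^\top\boldsymbol{\beta}_j|^r\bigr]^{1/r} + \delta\bigl(\sigma+\rho(\boldsymbol{\beta}_j)\bigr),\quad j=1,2.
\end{equation*}
Subtracting the two identities, \eqref{eq:H0} becomes the clean statement that
$A_{\mathbb{P}} := \mathbb{E}_{\mathbb{P}}[|Y-\mathbf{X}^\top\boldsymbol{\beta}_1|^r]^{1/r} - \mathbb{E}_{\mathbb{P}}[|Y-\mathbf{X}^\top\boldsymbol{\beta}_2|^r]^{1/r} + \delta\rho(\boldsymbol{\beta}_1)-\delta\rho(\boldsymbol{\beta}_2) \le 0.$

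Next, I would invoke the Lipschitz bound highlighted after Corollary \ref{cor:1}: for every $\boldsymbol{\beta}$,
\begin{equation*}
\bigl|\mathbb{E}_{\mathbb{P}_n}[|Y-\mathbf{X}^\top\boldsymbol{\beta}|^r]^{1/r} - \mathbb{E}_{\mathbb{P}}[|Y-\mathbf{X}^\top\boldsymbol{\beta}|^r]^{1/r}\bigr| \le (\sigma+\rho(\boldsymbol{\beta}))\,\widehat{\mathcal{W}}_{r,\rho,\sigma}(\mathbb{P}_n,\mathbb{P}),
\end{equation*}
which is immediate from the definition of $\widehat{\mathcal{W}}_{r,\rho,\sigma}$ in \eqref{eq:Wass} (couple $\mathbb{P}_n$ and $\mathbb{P}$ optimally at $\boldsymbol{\gamma}=\boldsymbol{\beta}$ and apply the triangle inequality for the $L^r$-norm). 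Applying this bound to both $\boldsymbol{\beta}_1$ and $\boldsymbol{\beta}_2$ and combining gives
$A_{\mathbb{P}_n} - A_{\mathbb{P}} \le \bigl(2\sigma+\rho(\boldsymbol{\beta}_1)+\rho(\boldsymbol{\beta}_2)\bigr)\widehat{\mathcal{W}}_{r,\rho,\sigma}(\mathbb{P}_n,\mathbb{P}),$
where $A_{\mathbb{P}_n}$ is defined analogously to $A_{\mathbb{P}}$ with $\mathbb{P}$ replaced by $\mathbb{P}_n$. Under the null $A_{\mathbb{P}}\le 0$, so dividing by the positive denominator and multiplying by $n^{1/(2r)}$ yields $T_n \le n^{1/(2r)}\,\widehat{\mathcal{W}}_{r,\rho,\sigma}(\mathbb{P}_n,\mathbb{P})$.

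Finally, I would chain the inequality $\widehat{\mathcal{W}}_{r,\rho,\sigma}(\mathbb{P}_n,\mathbb{P}) \le c_{\rho,d}\,\overline{\mathcal{W}}_r(\mathbb{P}_n,\mathbb{P})$ from \eqref{eq:bar_W} with Theorem \ref{thm:1}, which asserts that $\overline{\mathcal{W}}_r(\mathbb{P}_n,\mathbb{P})^r \le C/\sqrt{n}$ with probability at least $1-\alpha$. Taking $r$-th roots and multiplying by $n^{1/(2r)}$ collapses to $T_n \le c_{\rho,d}\,C^{1/r}$ on the same event, which is the desired conclusion. No step is a real obstacle; the only conceptual care is verifying that conditions (i)–(ii) indeed collapse the worst-case expectation over $B$ to the population-plus-penalty expression (and not merely to an upper bound), which is exactly why the equality in Theorem \ref{thm:penalized}, rather than just an inequality, is needed here.
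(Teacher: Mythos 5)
Your proposal is correct and follows essentially the same route as the paper's proof: collapse the null \eqref{eq:H0} via conditions (i)--(ii) and Theorem \ref{thm:penalized} to a population inequality, bound the sample fluctuation of each prediction error by $(\sigma+\rho(\boldsymbol{\beta}_j))\,\widehat{\mathcal{W}}_{r,\rho,\sigma}(\mathbb{P}_n,\mathbb{P})$ to get $T_n \le n^{1/(2r)}\widehat{\mathcal{W}}_{r,\rho,\sigma}(\mathbb{P}_n,\mathbb{P})$, and then chain \eqref{eq:bar_W} with Theorem \ref{thm:1}. The only cosmetic difference is that you state the Lipschitz bound with an absolute value while the paper uses the two one-sided versions, which is equivalent.
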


\section{Simulations}\label{sec:simulation}
Suppose that the training data consists of $n$ i.i.d.\ draws from a linear regression model, meaning $Y_i = \mathbf{X}_i^{\top} \boldsymbol{\beta} + \sigma \varepsilon_i.$
We take $\varepsilon_i$ to be uniformly distributed over the interval $[-1,1]$. The vector of covariates, $\mathbf{X}_i \in \mathbb{R}^{d}$, is generated as 
$\mathbf{X}_i = \sigma \lambda \tilde{\mathbf{X}}_i,$
where $\tilde{\mathbf{X}}_i$ is a $d$-dimensional vector of independent uniform random variables over the $[0,1]$ interval, independently of $\varepsilon_i$. The parameters controlling the simulation design are $(\boldsymbol{\beta}, \sigma, \lambda, d)$. 

We first focus on linear prediction using coefficients estimated via the $\sqrt{\text{LASSO}}$ ($r=2$). Recall from \eqref{eqn:oracle_simulations} that our oracle recommendation for the tuning parameter is  
$n^{-1/4} \cdot  \left( q_{1-\alpha} \right)^{1/2} \cdot \mathrm{diam}\left( \mathrm{supp}(\P)\right),$
where $q_{1-\alpha}$ is the $1-\alpha$ quantile of the Kolmogorov distribution. Algebra shows (see Section \ref{subsubsection:diameter_P} of the Supplementary Material \cite{MRVW2024}) that 
\[\mathrm{diam}\left( \mathrm{supp}(\P)\right) = \sigma \lambda \left( d + \left( \| \boldsymbol{\beta} \|_{1}   + (2/\lambda) \right)^2   \right)^{1/2}. \]

For comparison, the typical oracle recommendation for the $\sqrt{\text{LASSO}}$ based on \cite{belloni2011square}, can be shown to equal
\begin{equation} \label{eqn:oracle_lasso}
n^{-1/2} \cdot 3^{-1/2} \sigma \lambda \cdot \Phi^{-1} \left( \frac{1}{2} + \frac{(1-\alpha)^{(1/d)}}{2} \right). 
\end{equation}

\begin{figure}%
    \centering
    \includegraphics[scale=0.6]{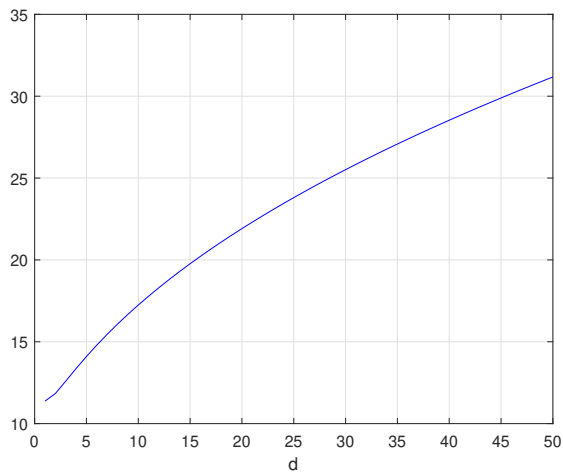}
    \caption{(Blue, Solid Line) Ratio of the oracle recommendations in \eqref{eqn:oracle_simulations} and \eqref{eqn:oracle_lasso}: $\sigma=1, \lambda=10, \alpha=0.05, n=2,500$ and $\boldsymbol{\beta} =[1,0,\ldots 0]^{\top}$. } 
    \label{fig:comparison}
\end{figure}

Figure \ref{fig:comparison} compares the ratio of \eqref{eqn:oracle_simulations} relative to \eqref{eqn:oracle_lasso}. The figure shows that our recommendation can be more than ten times larger than the typical recommendations in the literature. Thus, one first concern is that the distributional robustness guaranteed by our choice of $\delta_n$ could be achieved by setting all the coefficients to be zero (an adversarial nature cannot increase much the generalization error of such a predictor, as it does not rely at all on covariates). We also note that the recent work of \cite{caner2024should} has shown that larger tuning parameters could lead to \emph{incentive compatibility} in certain human-machine interactive environments.   

We now argue that in our simulation design it is possible to figure out the smallest sample size that would be required to avoid a ``trivial'' prediction. It is known, see \cite{stucky2017sharp}, that $\boldsymbol{\beta}=0_{d \times 1}$ is a solution to the $\sqrt{\text{LASSO}}$ problem if and only if
\begin{equation} \label{eqn:FOC_sqrtlasso}
\frac{ \| \frac{1}{n} \sum_{i=1}^{n} \mathbf{X}_i y_i \|_\infty  }{\sqrt{\frac{1}{n} \sum_{i=1}^n y_i^2 }} \leq \delta_{n,2}.
\end{equation}
Using a Central Limit Theorem and a Law of Large of numbers, algebra shows (see Section \ref{subsubsection:bound_appendix} in the Supplementary Material \cite{MRVW2024}) that \eqref{eqn:FOC_sqrtlasso} holds with high probability whenever  

\begin{equation} \label{eqn:conjecture}
n \leq 9 \cdot \left  \| \frac{\boldsymbol{\beta}}{ \sqrt{\boldsymbol{\beta}^{\top} \boldsymbol{\beta} } } \right  \|^{-4}_\infty  \cdot \left( q_{1-\alpha} \right)^{2} \cdot \left( d +   \left(  \| \boldsymbol{\beta} \|_{1}   + (2/\lambda) \right)^2   \right)^{2}.
\end{equation}
For $d=10$, $\boldsymbol{\beta}=(1,0,0 \ldots 0)^{\top}$, $\alpha=.05$ (or equivalently $q_{1-\alpha}=1.358$) the corresponding conservative bound is of about 2,200. This means that it will take a relatively large sample size in order for our regularization parameter to select at least some covariates for prediction. 

We verify this conjecture numerically. We simulate data using the $\sigma =1, \lambda=10, d=10, \boldsymbol{\beta} =[1,0,\ldots,0]^{\top}$ and consider sample sizes $n \in \{2125, 2250, 2375, 2500\}$. Our design corresponds to a low-dimensional problem (10 covariates and at least 2,000 observations). Figure \ref{fig:non_zero_coeffs} below reports the histogram associated to the number of nonzero coefficients selected by the $\sqrt{\text{LASSO}}$ using the regularization parameters in \eqref{eqn:oracle_simulations} and \eqref{eqn:oracle_lasso}. The numerical results reported below are in line with the bound derived in \eqref{eqn:conjecture}.

\begin{figure}%
\centering
     \begin{subfigure}[c]{0.45\textwidth}
        \centering
        \includegraphics[width=\textwidth]{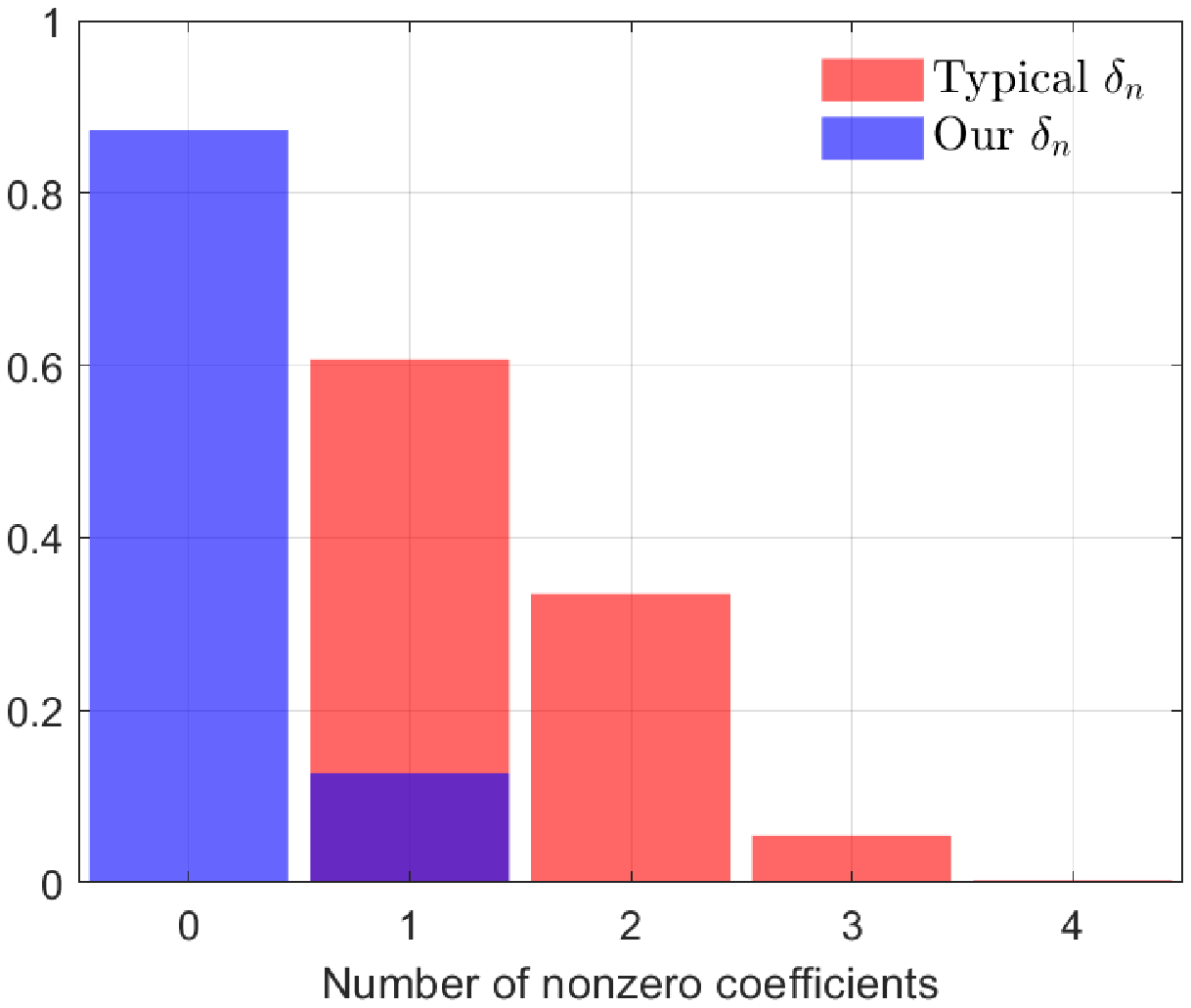}
    \caption{ $n=2125$ }
    \end{subfigure}
    \hfill
    \begin{subfigure}[c]{0.45\textwidth}
        \centering
        \includegraphics[width=\textwidth]{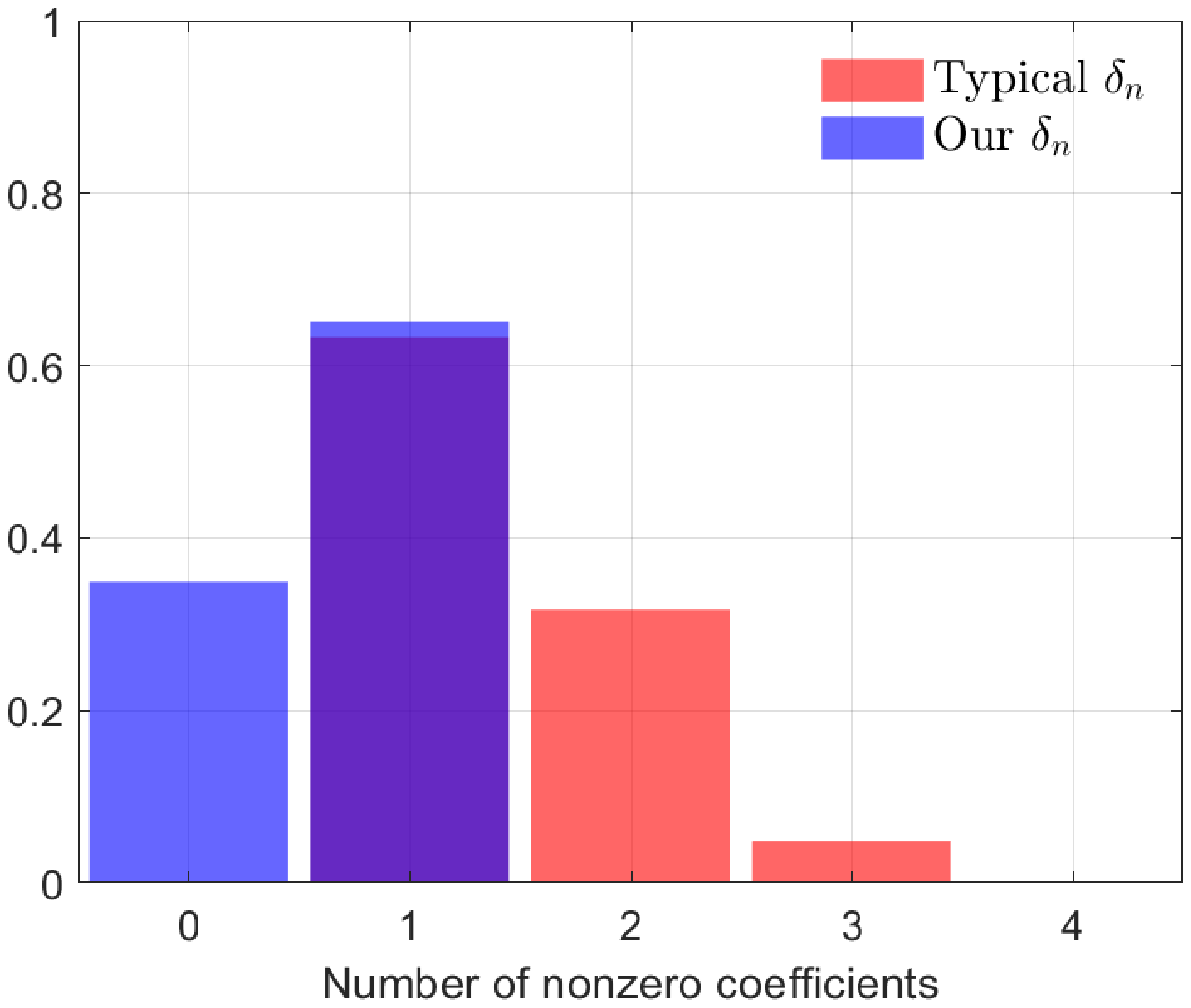}
    \caption{ $n=2250$ } 
    \end{subfigure}
    \vskip\baselineskip
    \begin{subfigure}[c]{0.45\textwidth}
        \centering
        \includegraphics[width=\textwidth]{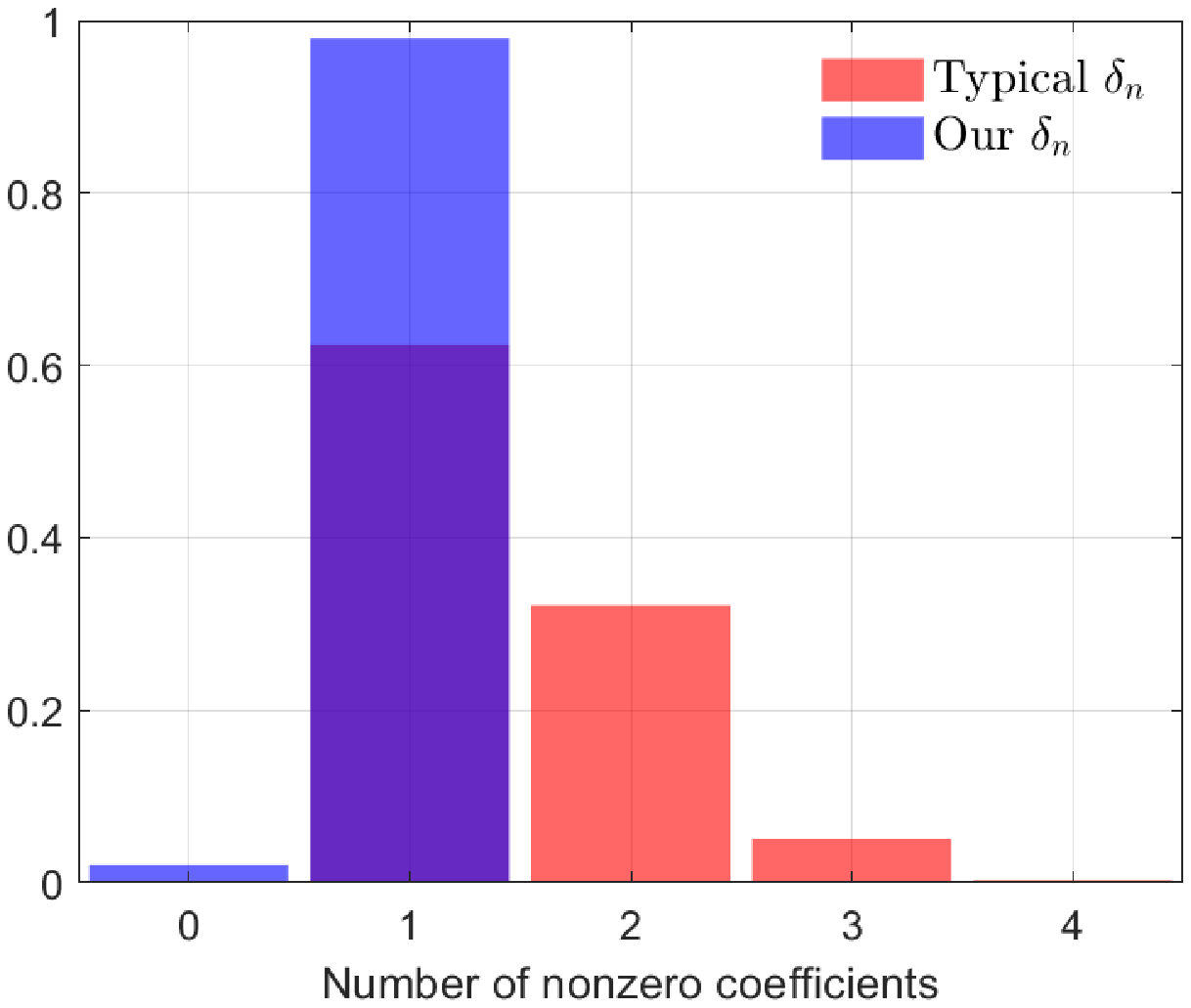}
    \caption{ $n=2375$ }
    \end{subfigure}
    \hfill
    \begin{subfigure}[c]{0.45\textwidth}
        \centering
        \includegraphics[width=\textwidth]{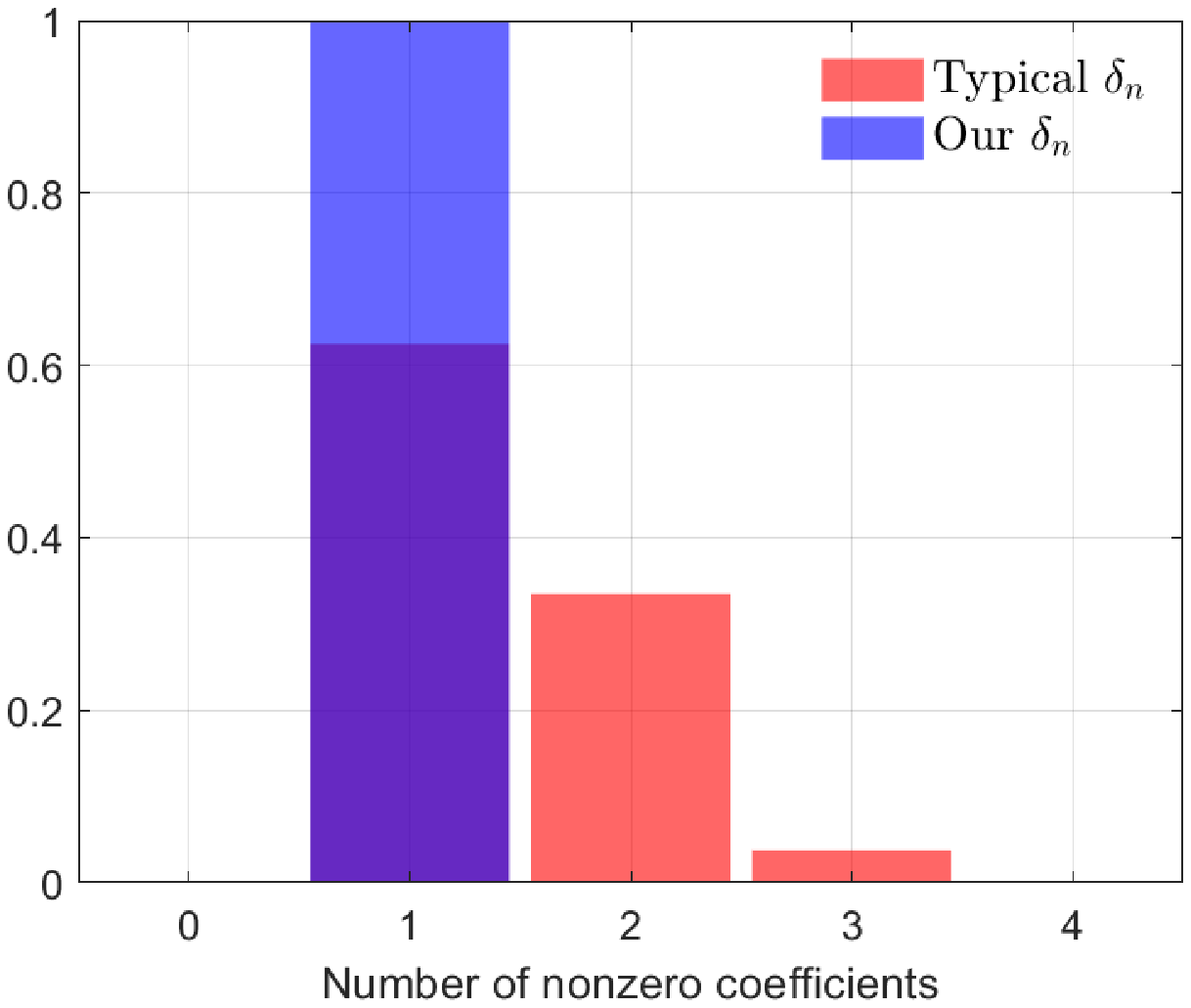}
    \caption{ $n=2500$ } 
    \end{subfigure}    
    \caption{\small Fraction of simulation draws in which the $\sqrt{\text{LASSO}}$ selects 0,1,\ldots, 4 nonzero coefficients: (Blue) oracle $\delta_n$ in  \eqref{eqn:oracle_lasso}; (Red) our oracle recommendation of $\delta_n$ in \eqref{eqn:oracle_simulations}.}
    \label{fig:non_zero_coeffs}
\end{figure}

\emph{Training/Testing error.} Figure \ref{fig:training_testing} below reports the training/testing root-mean squared prediction error (RMSPE) associated to the three estimators considered in our simulations: the OLS estimator, the $\sqrt{\text{LASSO}}$ with the $\delta_n$ in \eqref{eqn:oracle_lasso}, and the $\sqrt{\text{LASSO}}$ with the $\delta_n$ in \eqref{eqn:oracle_simulations}. The training data is generated according to the design described above for a sample size of $n=2500$. For testing, we perturb the true data generating process according to the worst-case distribution derived in  Corollary 1 with $\delta_n$ in \eqref{eqn:oracle_simulations} replacing $\delta$. The plots report the histogram---across simulations---of the ``relative'' root mean-squared prediction error in the training (or testing) data. Each figure compares the estimators indicated in the legend below them. For example, Panel a) of Figure \ref{fig:training_testing} reports the root MSPE of the $\sqrt{\text{LASSO}}$ (SQL), divided by the root MSPE of OLS, in both the training and testing data.  

\begin{figure}[t!]
\centering
     \begin{subfigure}[b]{0.45\textwidth}
        \centering
        \includegraphics[width=\textwidth]{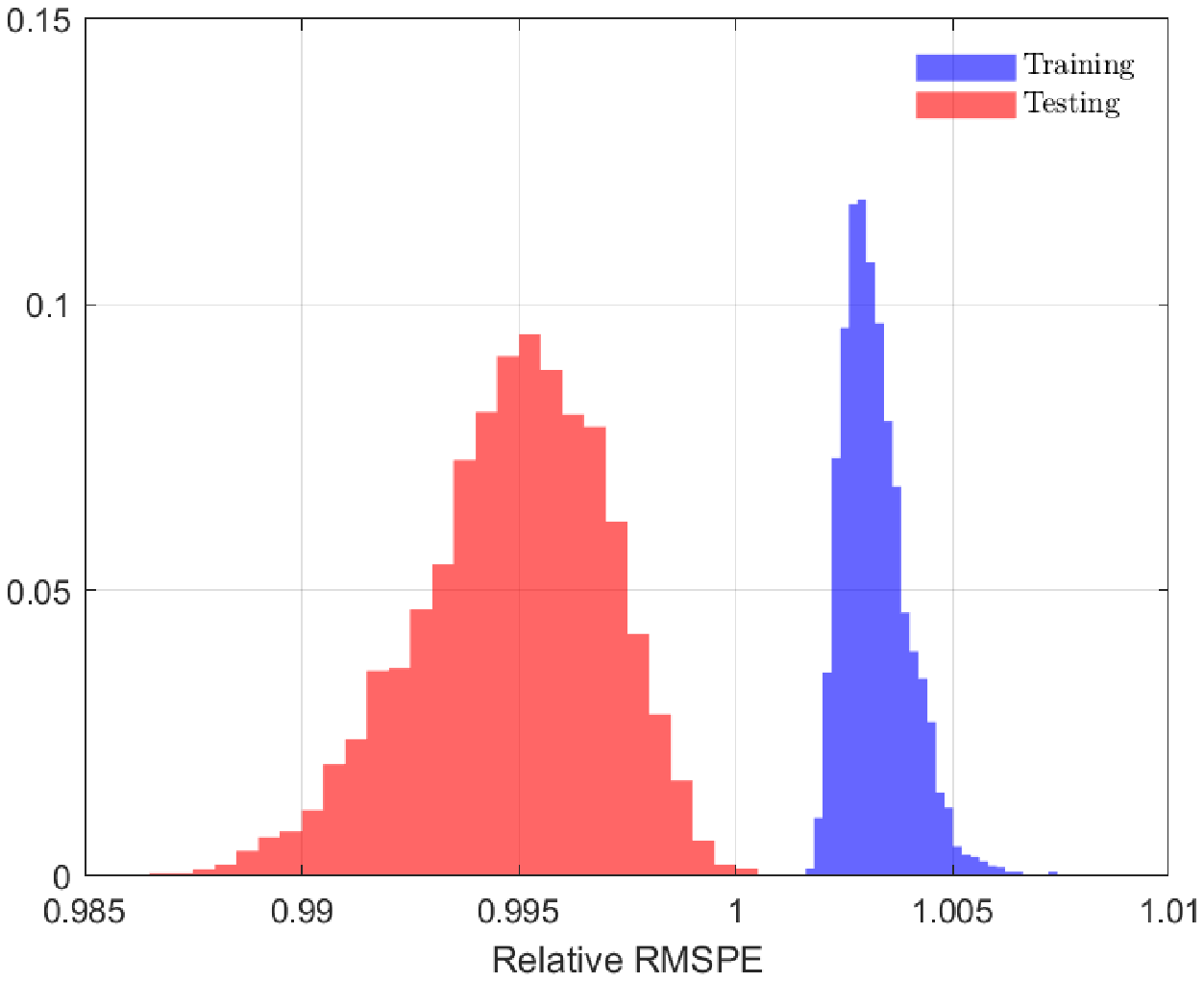}
    \caption{ SQL/OLS }
    \end{subfigure}
    \hfill
    \begin{subfigure}[b]{0.45\textwidth}
        \centering
        \includegraphics[width=\textwidth]{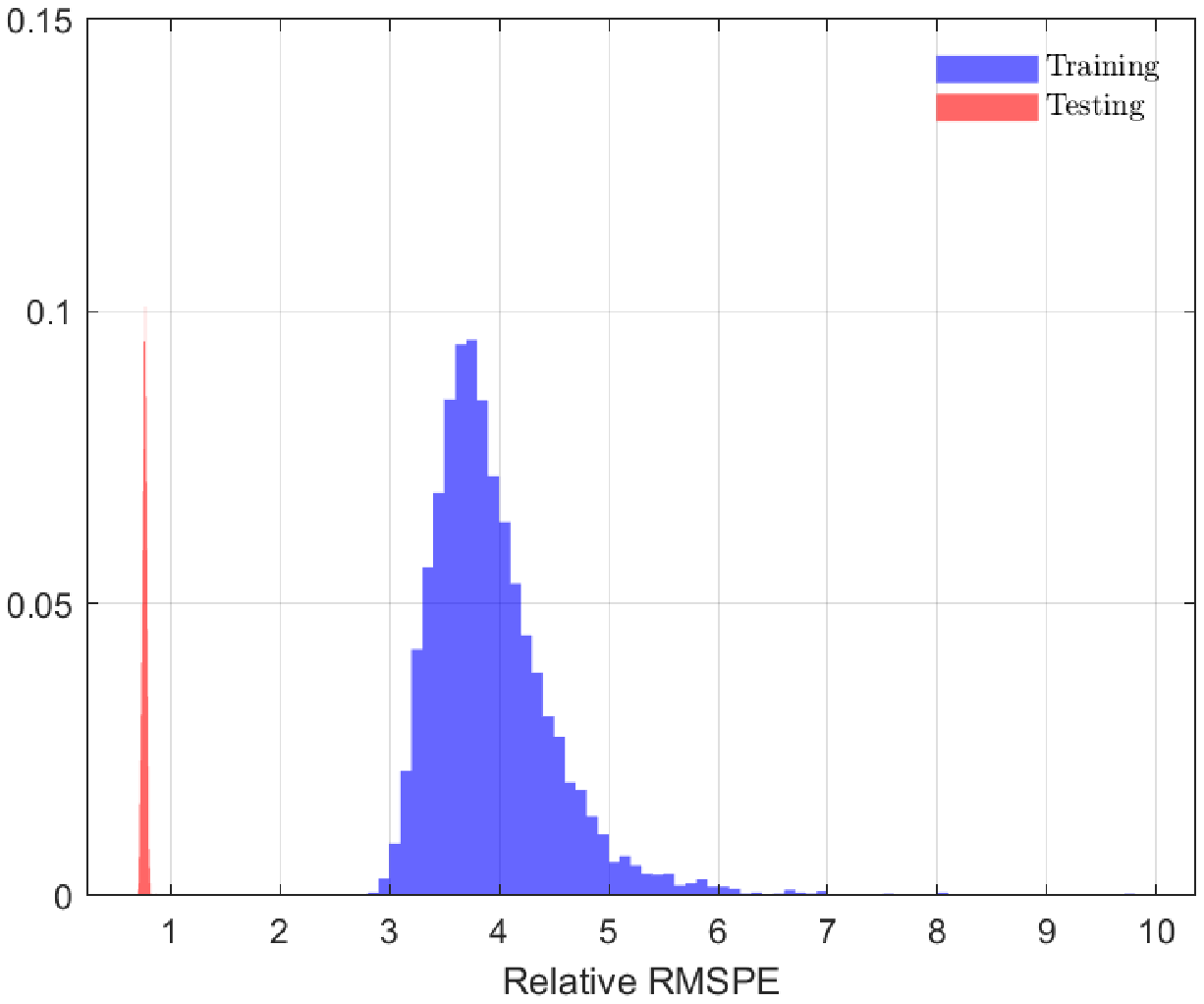}
    \caption{ SQL (new $\delta_n$) / SQL} 
    \end{subfigure}
    \caption{\small Relative Training/Testing Root Mean-Squared Prediction Error.}
    \label{fig:training_testing}
\end{figure}

The simulation results are in line with the theoretical predictions. First, since we are considering a simulation design where $n$ is large relative to $d$, the oracle $\delta_n$ in \eqref{eqn:oracle_lasso} is close to zero. This means that the predictions associated to the $\sqrt{\text{LASSO}}$ in the training sample are not very different to those obtained via OLS. Panel a) of Figure \ref{fig:training_testing} indeed shows that the relative training error between the $\sqrt{\text{LASSO}}$ (with the typical $\delta_n$) and OLS remains very close to one across simulations. Panel b) shows that that the difference between the regularization parameters in \eqref{eqn:oracle_lasso} and \eqref{eqn:oracle_simulations} generates a sizeable difference in training error. However, that the larger value of $\delta_n$ does translate to better out-of-sample performance.

Finally, we verify the bound in Theorem~\ref{cor:generalization}. The corollary implies that with probability at least $95\%$ the root-MSPE of the $\sqrt{\text{LASSO}}$ in the \emph{testing} set (for any distribution in the ball that is $\epsilon$ away from the true data generating process) must be bounded by the sum of a) the root-MSPE of the $\sqrt{\text{LASSO}}$ in the \emph{training} set and ii) $(\delta_n + \epsilon) (\sigma + \rho(\boldsymbol{\beta}))$. Figure \ref{fig:bounds}  shows that the bound holds for the recommended $\delta_n$, but not for the usual one. Additional simulations are reported in Section \ref{sec:additional_sims} of the Supplementary Material \cite{MRVW2024}.

\begin{figure}[t!]
    \begin{center}
     \begin{subfigure}[c]{0.45\textwidth}
        \centering
        \includegraphics[scale=.5]{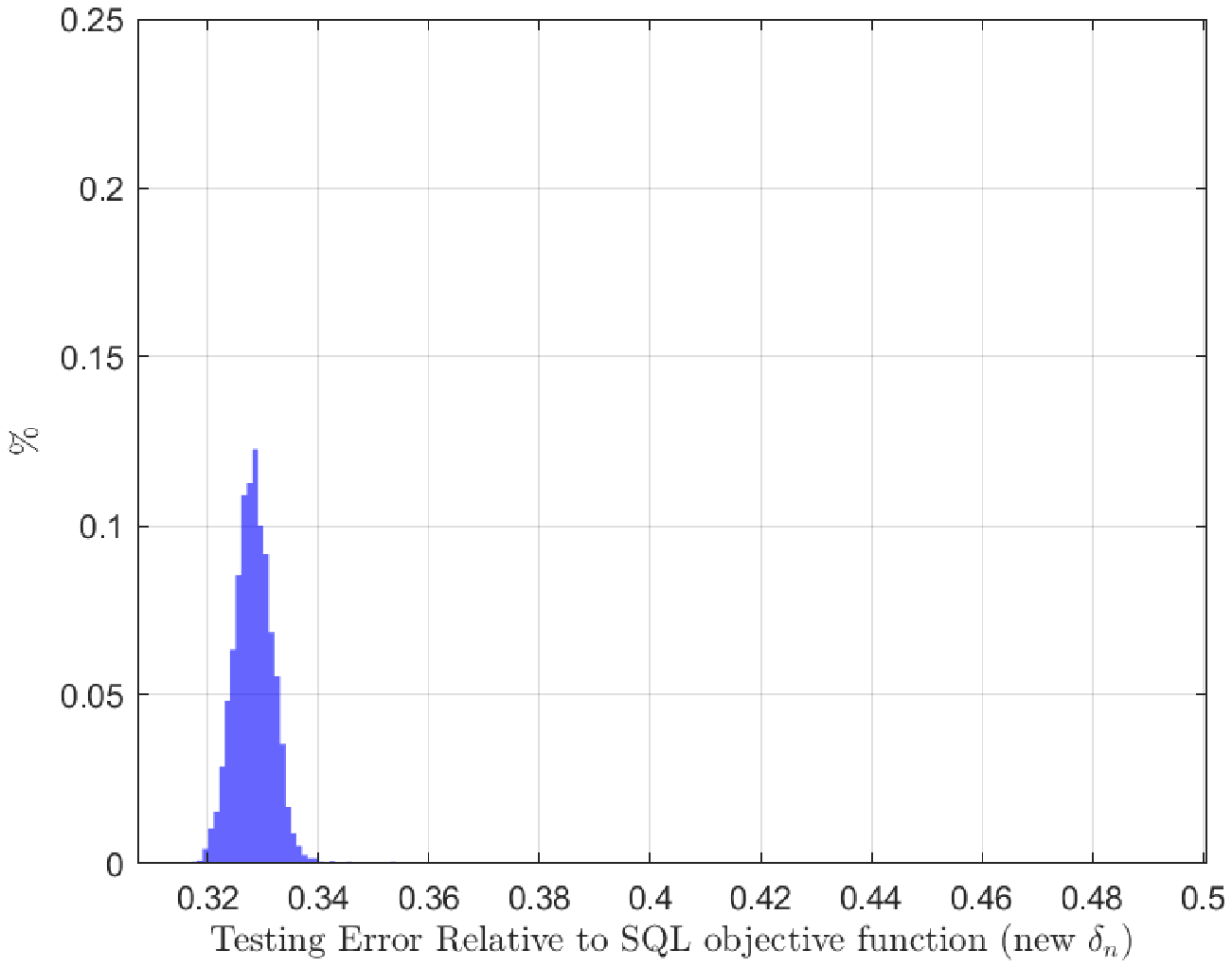}
    \caption{ SQL (new $\delta_n$) }
    \end{subfigure}
    \hfill
    \begin{subfigure}[c]{0.45\textwidth}
        \centering
        \includegraphics[scale=.5]{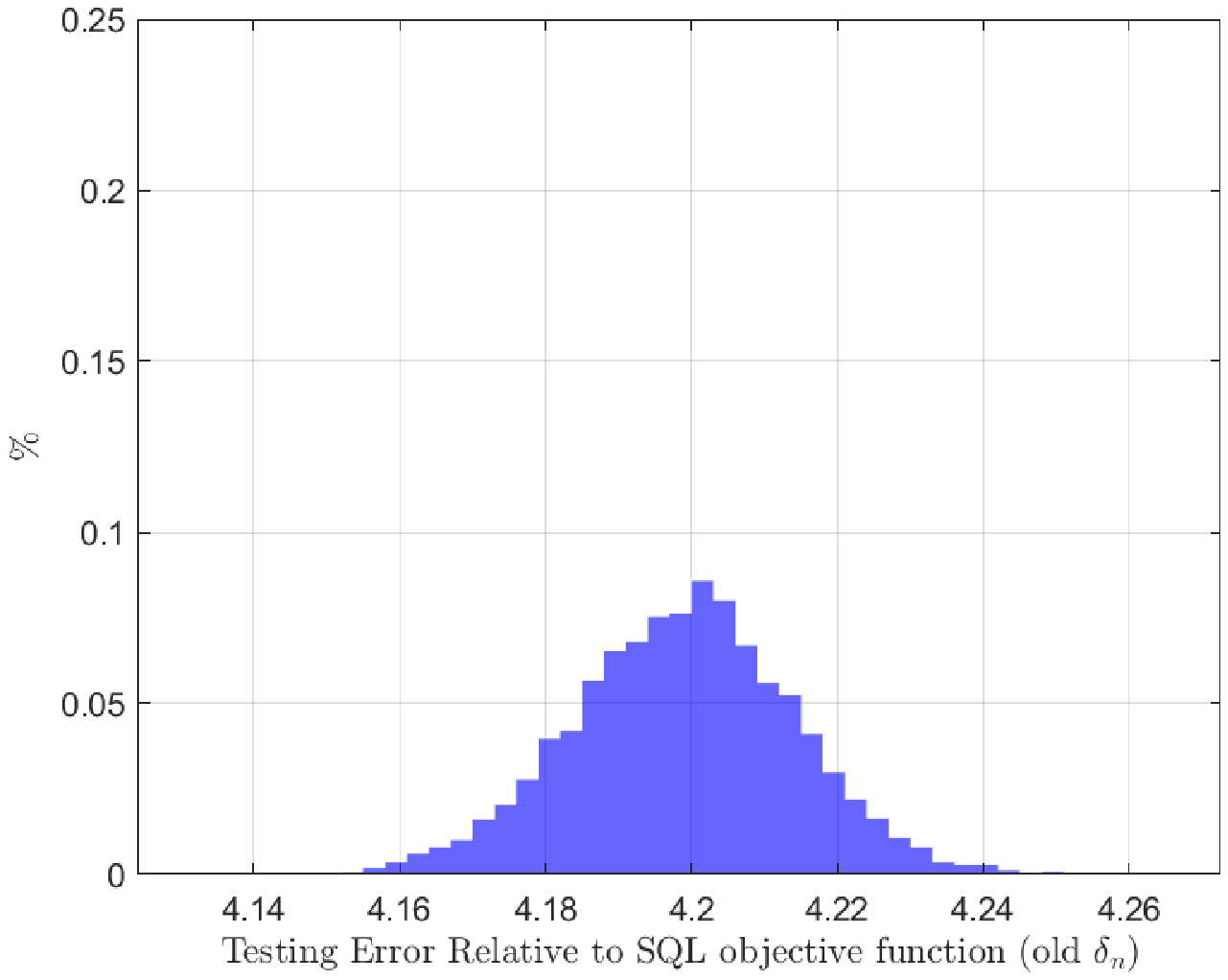}
    \caption{ SQL (old $\delta_n$) } 
    \end{subfigure}
    \end{center}
    \caption{\small Testing error and Objective function}
    \label{fig:bounds}
\end{figure}

\bibliographystyle{ecta}
\bibliography{references}

\newpage
\appendix

\section{Proof of Results in the Main Text}

\subsection{Proof of Theorem~\ref{thm:penalized}} \label{sec:THMpenalizedproof}

In Section~\ref{sec:dro} we provide a proof sketch after the statement of Theorem~\ref{thm:penalized}. Here we elaborate on the details of the proof. The statements of two steps mentioned in Section~\ref{sec:dro} are repeated below for the reader's convenience.

\begin{itemize}
\item[\textbf{Step 1.}] We show that
\begin{equation}
\label{eq:step1_secproofs}
\left( \mathbb{E}_{\widetilde{\mathbb{P}}} \left[ \left|Y-\mathbf{X}^{\top}\boldsymbol{\beta} \right|^r \right] \right)^{1/r} \leq \sqrt[r]{\mathbb{E}_{\mathbb{P}} \left[ \left|Y-\mathbf{X}^{\top} \boldsymbol{\beta}\right|^r\right]}  + \delta \left(\sigma+\rho(\boldsymbol{\beta})\right),
\end{equation}
holds for any $\boldsymbol{\beta} \in \mathbb{R}^d$ and any $\widetilde{\mathbb{P}} \in B_{\delta}(\mathbb{P})$.

\textbf{Proving Step 1.} Take an arbitrary $\widetilde{\mathbb{P}} \in B_{\delta}(\mathbb{P})$ and let $\pi(\boldsymbol{\beta})$ be an optimal coupling for $\iW_{r,\rho, \sigma}$. By writing $\pi(\boldsymbol{\beta})$ we emphasize that the coupling will depend on $\boldsymbol{\beta}$; though, this matters little for the proof. Namely, $((\mathbf{X}, Y), (\widetilde{\mathbf{X}},\widetilde{Y})) \sim \pi(\boldsymbol{\beta})$ with $(\mathbf{X}, Y) \sim \mathbb{P}$ and $(\widetilde{\mathbf{X}},\widetilde{Y}) \sim \widetilde{\mathbb{P}}$. 
Consequently we conclude that
\[  \mathbb{E}_{\widetilde{\mathbb{P}}} \left[ \left|Y-\mathbf{X}^{\top}\boldsymbol{\beta}\right|^r \right]  =  \mathbb{E}_{\pi(\boldsymbol{\beta})} \left[ \left|\widetilde{Y}-\widetilde{\mathbf{X}}^{\top}\boldsymbol{\beta}\right|^r \right] . \]
By the triangle inequality we obtain
\begin{equation*}
\begin{split}
&\sqrt[r]{\mathbb{E}_{\pi(\boldsymbol{\beta})} \big[ \big |\widetilde{Y}-\widetilde{\mathbf{X}}^{\top}\boldsymbol{\beta}\big |^r \big]}  = \sqrt[r]{\mathbb{E}_{\pi(\boldsymbol{\beta})} \big[ \big | (\widetilde{Y} - Y) + (Y - \mathbf{X}^{\top}\boldsymbol{\beta}) + ( \mathbf{X} - \widetilde{\mathbf{X}})^{\top}\boldsymbol{\beta})\big|^r \big]} \\
& \hspace{22mm}  \leq  \sqrt[r]{\mathbb{E}_{\pi(\boldsymbol{\beta})} \big[|Y - \mathbf{X}^{\top}\boldsymbol{\beta}|^r \big]}+  \sqrt[r]{\mathbb{E}_{\pi(\boldsymbol{\beta})}\big[\big| (\widetilde{Y} - Y)+(\mathbf{X} - \widetilde{\mathbf{X}})^{\top}\boldsymbol{\beta})\big|^r \big]}.
\end{split}
\end{equation*}
Recalling the choice of $\pi(\beta)$ we conclude that
\begin{equation}
\begin{split}
\sqrt[r]{\mathbb{E}_{\pi(\boldsymbol{\beta})} \left[ \left \lvert\widetilde{Y}-\widetilde{\mathbf{X}}^{\top}\boldsymbol{\beta}\right \lvert^r \right]}  \leq  \sqrt[r]{\mathbb{E}_{\mathbb{P}} \left[\left|Y - \mathbf{X}^{\top}\boldsymbol{\beta}\right|^r \right] }+ \delta  \left(\sigma+\rho(\boldsymbol{\beta})\right).
\end{split}
\end{equation}

\item[\textbf{Step 2.}] We show that for any $\boldsymbol{\beta}\in \text{dom}(\rho)$, the upper bound given in \textbf{Step 1} is tight; i.e.\ we construct $\mathbb{P}^* \in B_{\delta}(\mathbb{P})$, for which the bound holds exactly.
 
\textbf{Proof Step 2.} 
Let $\boldsymbol{\beta}^*$ be an element of $\partial \rho (\boldsymbol{\beta})$ satisfying Equation \eqref{equation:condition}.

Consider the distribution $\mathbb{P}^*$ corresponding to the random vector $( \widetilde{\mathbf{X}},\widetilde{Y})$ defined by
\begin{equation} \label{equation:worst-case-dist-rho}
     \widetilde{\mathbf{X}} = \mathbf{X} - e \left( \boldsymbol{\beta}^* - \frac{\boldsymbol{\beta}}{\boldsymbol{\beta}^{\top} \boldsymbol{\beta}} \rho^*(\boldsymbol{\beta}^*) \right),  \qquad \widetilde{Y} = Y+\sigma e,
\end{equation}
where  
\[ e := \frac{\delta(Y-\mathbf{X}^{\top}\boldsymbol{\beta})}{ \sqrt[r]{\mathbb{E}_{\mathbb{P}} \left[ \left|Y-\mathbf{X}^{\top}\boldsymbol{\beta}\right|^r \right]} }, \qquad (Y, \mathbf{X}) \sim \mathbb{P}. \]
The distributions $\mathbb{P}^*$ and $\mathbb{P}$ are already coupled, since $( \widetilde{\mathbf{X}}, \widetilde{Y})$ are measurable functions of $(\mathbf{X},Y) \sim \mathbb{P}$. Let $\pi^*(\boldsymbol{\beta})$ denote the coupling of $(\mathbb{P}^*,\mathbb{P})$. 

Next we show that the distribution $\mathbb{P}^*$ of $( \widetilde{\mathbf{X}}, \widetilde{Y})$ is an element of $B_{\delta}(\mathbb{P})$: by construction we have
\begin{align*}
\mathbb{E}_{\pi^*(\boldsymbol{\beta})} \Big[ \Big |(\tilde Y-Y)+(\mathbf{X}-\widetilde{\mathbf{X}})^{\top} \boldsymbol{\gamma}\Big |^r\Big] &= \mathbb{E}_{\pi^*(\boldsymbol{\beta})} \left[ |e|^r \left|\sigma + \left( \boldsymbol{\beta}^* - \frac{\boldsymbol{\beta}}{\boldsymbol{\beta}^{\top} \boldsymbol{\beta}} \rho^*(\boldsymbol{\beta}^*) \right)^{\top} \boldsymbol{\gamma} \right|^r \right] \\
 &=  \left | \sigma+(\boldsymbol{\beta}^*)^{\top} \boldsymbol{\gamma} - \frac{\boldsymbol{\beta}^{\top} \boldsymbol{\gamma}}{\boldsymbol{\beta}^{\top} \boldsymbol{\beta}} \rho^*(\boldsymbol{\beta}^*)  \right|^r \,\mathbb{E}_{\pi^*(\boldsymbol{\beta})} \left[ |e|^r \right]\\
 &\leq  \left[\delta \left(\sigma+ \rho(\boldsymbol{\gamma})\right)\right]^r,
\end{align*}
where the last inequality follows because $$\big| \big( \boldsymbol{\beta}^* - \frac{\boldsymbol{\beta}}{\boldsymbol{\beta}^{\top} \boldsymbol{\beta}} \rho^*(\boldsymbol{\beta}^*) \big)^{\top} \boldsymbol{\gamma}\big|\le \rho(\boldsymbol{\gamma}),$$ for any $\boldsymbol{\gamma} \in \R^d$ by the assumption in \eqref{equation:condition} and since  $\mathbb{E}_{\mathbb{P}}[ |e|^r ] = \delta^r$.

Thus, we only need to compute $\mathbb{E}_{\mathbb{P}^*} [| Y - \mathbf{X}^{\top}\boldsymbol{\beta}|^r ]=\mathbb{E}_{\pi^*(\boldsymbol\beta)} [| \widetilde{Y} - \widetilde{\mathbf{X}}^{\top}\boldsymbol{\beta}|^r ]$.
Adding and subtracting $\mathbf{X}^{\top} \beta$ and $Y$ to $\widetilde{Y} - \widetilde{\mathbf{X}}^{\top}\boldsymbol{\beta}$ we have from \eqref{equation:worst-case-dist-rho}
\begin{equation}
\begin{split}
\label{eq:expand}
 \widetilde{Y} - \widetilde{\mathbf{X}}^{\top}\boldsymbol{\beta} &= \widetilde{Y}-Y + Y - \mathbf{X}^{\top}\boldsymbol{\beta}  +  (\mathbf{X} - \widetilde{\mathbf{X}})^{\top}\boldsymbol{\beta}=  \left(Y - \mathbf{X}^{\top}\boldsymbol{\beta}\right) + e \left(\sigma +\rho(\boldsymbol{\beta})\right),
\end{split}
\end{equation}
where the last term applies \cite[Theorem 23.5, p.\ 218]{rockafellar2015convex}, which shows that for any proper, convex function $\boldsymbol{\beta}^* \in \partial \rho(\boldsymbol{\beta})$ if and only if
\[ (\boldsymbol{\beta}^*)^{\top}\boldsymbol{\beta} - \rho^*(\boldsymbol{\beta}^*) = \rho(\boldsymbol{\beta});\]
hence, 
\[\left(\mathbf{X} - \widetilde{\mathbf{X}}\right)^{\top}\boldsymbol{\beta}=  e \left( \boldsymbol{\beta}^* - \frac{\boldsymbol{\beta}}{\boldsymbol{\beta}^{\top} \boldsymbol{\beta}} \rho^*(\boldsymbol{\beta}^*) \right)^{\top}\boldsymbol{\beta} = e\rho\left(\boldsymbol{\beta}\right).\]
Therefore, using \eqref{eq:expand} and
writing $(Y- \mathbf{X}^{\top}\boldsymbol{\beta})$ as $e \, \sqrt[r]{\mathbb{E}_{\mathbb{P}}[ |Y- \mathbf{X}^{\top}\boldsymbol{\beta}|^r ]}/\delta$, we have that
\begin{align*}
    \mathbb{E}_{\mathbb{P}^*} \left[\left| Y - \mathbf{X}^{\top}\boldsymbol{\beta} \right|^r \right] &=  \mathbb{E}_{\pi^*(\boldsymbol\beta)} \left[\left| \big(Y - \mathbf{X}^{\top}\boldsymbol{\beta}) + e (\sigma+\rho(\boldsymbol{\beta}))\right|^r\right] \\\
    &=  \bigg| \frac{1}{\delta}\sqrt[r]{\mathbb{E}_{\mathbb{P}}[ |Y- \mathbf{X}^{\top}\boldsymbol{\beta}|^r ]}+ (\sigma+\rho(\boldsymbol{\beta}))\bigg|^r\, \mathbb{E}_{\mathbb{P}} \left[|e|^r\right] \\
    &= \left| \sqrt[r]{\mathbb{E}_{\mathbb{P}}\left[ |Y- \mathbf{X}^{\top}\boldsymbol{\beta}|^r \right]} + \delta (\sigma+\rho(\boldsymbol{\beta}))\right|^r. 
\end{align*}
In the final step above, we again used that $\mathbb{E}_{\mathbb{P}}[ |e|^r ] = \delta^r$.
\end{itemize} 

\subsection{Proof of Theorem 2}\label{proof:theorem2}

We first recall the representations for the one-dimensional Wasserstein distance
\begin{align}
    \mathcal{W}_r(\P_{\boldsymbol{\gamma},n}, \P_{\boldsymbol{\gamma}})^r
&=\int_0^1 \left|F^{-1}_{\boldsymbol{\gamma},n}(p)-F^{-1}_{\boldsymbol{\gamma}}(p)\right|^r\,dp,
\label{eq:Wass1_secproof}
\end{align}
for $r\ge 1$ and
\begin{align}
   \mathcal{W}_1(\P_{\boldsymbol{\gamma},n}, \P_{\boldsymbol{\gamma}})
&=\int_{\R} \left|F_{\boldsymbol{\gamma},n}(t)-F_{\boldsymbol{\gamma}}(t)\right|\,dt,
\label{eq:Wass2_secproof}
\end{align}
see e.g.\ \cite[Theorem 2.9, Theorem 2.10]{bobkov2019one}. We also note that $\mathcal{W}_r$ is translation invariant, which implies in particular that 
\begin{align*}
&\mathcal{W}_r\left(\P_{\boldsymbol{\gamma},n}, \, \P_{\boldsymbol{\gamma}}\right)= \\\
& \qquad \mathcal{W}_r\left(\left[\left((\mathbf{X},Y)-(\mathbf{x_0},y_0)\right) ^{\top} \boldsymbol{\gamma} \right]_* \,\P_{\boldsymbol{\gamma},n}, \,\left[\left((\mathbf{X},Y)-(\mathbf{x_0}, y_0)\right)^{\top} \boldsymbol{\gamma} \right]_* \,\P_{\boldsymbol{\gamma}}\right),
\end{align*}
for any $\mathbf{x_0}\in \R^d$ and $y_0\in \R$. Defining $c:=\mathrm{diam}\left( \text{supp}(\P)\right)$, there is thus no loss of generality if we assume that 
\begin{align}\label{eq:support}
\text{supp}(\P_{\boldsymbol{\gamma}})\subseteq \left[0, \, c\right].
\end{align}

Noting that $|F^{-1}_{\boldsymbol{\gamma}, n}(p)-F^{-1}_{\boldsymbol{\gamma}}(p)|\le c$ for all $p\in (0,1)$, we estimate 
\begin{align*}
\overline{\mathcal{W}}_r(\P_{\boldsymbol{\gamma},n}, \P_{\boldsymbol{\gamma}})^r
&=\sup_{ \|\boldsymbol{\gamma}\|=1 } \left(\int_0^1 \left|F^{-1}_{\boldsymbol{\gamma},n}(p)-F^{-1}_{\boldsymbol{\gamma}}(p)\right|^r\,dp\right)\\
&\le c^{r-1}\sup_{ \|\boldsymbol{\gamma}\|=1 } \left(\int_0^1 \left|F^{-1}_{\boldsymbol{\gamma},n}(p)-F^{-1}_{\boldsymbol{\gamma}}(p) \right|\,dp\right)\\
&=c^{r-1}\sup_{ \|\boldsymbol{\gamma}\|=1 } \left(\int_{\R}  \left|F_{\boldsymbol{\gamma},n}(t)-F_{\boldsymbol{\gamma}}(t) \right|\,dt\right),
\end{align*}
where the final inequality follows from \eqref{eq:Wass1_secproof} and \eqref{eq:Wass2_secproof}.
Next, recalling \eqref{eq:support},
\begin{align*}
\sup_{ \|\boldsymbol{\gamma}\|=1 } \int_{\R}  \left|F_{\boldsymbol{\gamma},n}(t)-F_{\boldsymbol{\gamma}}(t) \right|\,dt
&\le \sup_{ \|\boldsymbol{\gamma}\|=1 } \int_{0}^{c} \sup_t   \left|F_{\boldsymbol{\gamma}, n}(t)-F_{\boldsymbol{\gamma}}(t) \right|\, ds\le c\sup_{f\in \mathcal{H}}   \left|\E_{\P_n}[f]-\E_{\P}[f] \right|,
\end{align*}
where
\[ \mathcal{H}:= \left\{\mathds{1}_{\left\{\mathbf{x}^\top \boldsymbol{\gamma} \, \le \, t \right\}}:  \boldsymbol{\gamma}\in \mathbb{R}^{d+1}, \, t\in \R \right\}. \]
The claim now follows from Lemma \ref{lem:3} in Section \ref{sec:aux_results_proofs}.

\subsection{Proof of Theorem 3}\label{proof:theorem3}

By Lemma \ref{lem:key} in Section \ref{sec:aux_results_proofs} with $k=\log\left(2n+1\right)^{1/s}$ we have 
\begin{equation}
 \W_r(\P_n,\P)^r \le 2^rr\log \left(2n+1 \right)^{r/s} \left ( I_1 + \frac{\sqrt{\Gamma\vee\Gamma_n}}{s/2-r}\log\left(2n+1\right)^{-1/2} I_2 \right)~,
\end{equation}
where
\begin{align*}
    I_1 &= \sup_{(\boldsymbol{\gamma},  \, t) \, \in \, \R^{d+1}\times \R} \left|F_{\boldsymbol{\gamma}}(t)-F_{\boldsymbol{\gamma}, n}(t))\right|~,\\
    I_2 &=  \sup_{(\boldsymbol{\gamma}, \, t)  \, \in \, \R^{d+1}\times \R} \frac{(F_{\boldsymbol{\gamma}}(t)-F_{\boldsymbol{\gamma}, n}(t))^+}{\sqrt{F_{\boldsymbol{\gamma}}(t)(1-F_{\boldsymbol{\gamma},n}(t))}} + \sup_{(\boldsymbol{\gamma}, \, t)  \, \in \,  \R^{d+1}\times \R} \frac{(F_{\boldsymbol{\gamma}, n}(t)-F_{\boldsymbol{\gamma}}(t))^+}{\sqrt{F_{\boldsymbol{\gamma}, n}(t)(1-F_{\boldsymbol{\gamma}}(t))}}~,\\
    \Gamma_n &= \sup_{ \|\boldsymbol{\gamma}\|=1 }\mathbb{E}_{\P_n}\big[|(\mathbf{X},Y)^\top\boldsymbol{\gamma}|^s\big] = \sup_{ \|\boldsymbol{\gamma}\|=1 }\frac{1}{n}\sum_{i=1}^n  \big|(\mathbf{X}_i,Y_i)^\top \boldsymbol\gamma\big|^s~,\\
    \Gamma &=\E_{\P}\left[  \big\|(\mathbf{X},Y ) \|_2^{s}\right]=\E_{\P}\Big[ \sup_{ \|\boldsymbol{\gamma}\|=1 } |(\mathbf{X},Y)^\top \boldsymbol{\gamma} |^{s}\Big]~.
\end{align*}

Next, by Markov's inequality and the triangle inequality
\begin{align*}
\P\left( \Gamma_n \ge C \right)&\le \frac{\E_\P[\Gamma_n]}{C} =
\frac{1}{C}\E_{\P} \left[\sup_{ \|\boldsymbol{\gamma}\|=1 }\frac{1}{n}\sum_{i=1}^n  \left|(\mathbf{X}_i,Y_i)^\top \boldsymbol\gamma\right|^s \right]
\le \frac{\Gamma}{C}.
\end{align*}

Setting the last expression equal to $\alpha$ yields $\Gamma_n\le \Gamma/\alpha$ on a set of probability at least $1-\alpha.$ Combining this with Lemma \ref{lem:3} (to control $I_1$) and Lemma \ref{lem:suboptimal} (to control $I_2$) yields that $\W_r(\P_n,\P)^r$ is less than or equal to the following with probability greater than $1-3\alpha$:
\begin{align*}
\begin{split}
&2^rr\log\left(2n+1\right)^{\frac{r}{s}} \Bigg[\frac{1}{\sqrt{n}}\Bigg(
180\sqrt{d+2}+\sqrt{2\log\left(\frac{1}{\alpha}\right)}\Bigg)+\sqrt{\frac{\Gamma}{\alpha}} \frac{1}{s/2-r}  \frac{1}{\sqrt{\log\left(2n+1\right)}}I_2\Bigg]\\
&\le \frac{2^rr\log\left(2n+1\right)^{\frac{r}{s}}}{\sqrt{n}} \Bigg[
180\sqrt{d+2}+\sqrt{2\log\left(\frac{1}{\alpha}\right)}+\sqrt{\frac{\Gamma}{\alpha}} \frac{8}{s/2-r} \sqrt{\log\left(\frac{8}{\alpha}\right) + (d+2)}\Bigg],
\end{split}
\end{align*}
which is the claim.

\subsection{Proof of Theorem \ref{thm:ass_bdd}}

This claim follows from the estimate 
\begin{align*}
\mathcal{W}_r(\P_{\boldsymbol{\gamma}, n},\P_{\boldsymbol{\gamma}})^r\le \mathrm{diam}(\mathrm{supp}(\P))^r \sup_{f\in \mathcal{H}^0} |\E_{\P_n}[f]-\E_{\P}[f]|,
\end{align*}
stated in the proof of Theorem \ref{thm:1} together with
\begin{align*}
 \sqrt{n}\sup_{f \in \mathcal{H}^0} |\E_{\P_n}[f]-\E_{\P}[f]|  \Rightarrow \sup_{f \in  \mathcal{H}^0} |G_f|,
\end{align*}
as in the proof of Theorem \ref{thm:ass}. 
As $\sup_{f \in  \mathcal{H}^0} |G_f|$ dominates $\sup_{t\in [0,1]} |B(t)|$ in stochastic order, this concludes the proof.

\subsection{Proof of Theorem \ref{thm:ass}}\label{proof:theorem5}

Note that again by \cite[Proposition 7.14]{bobkov2019one} we have
\begin{align*}
\mathcal{W}_r(\P_{\boldsymbol{\gamma}, n},\P_{\boldsymbol{\gamma}})^r &\le r2^{r-1}\int |t|^{r-1} |F_{\boldsymbol{\gamma}, n}(t)-F_{\boldsymbol{\gamma}}(t)|\,dt\\
&= r2^{r-1}\Big( \int_0^\infty |t|^{r-1} |(1-F_{\boldsymbol{\gamma}, n}(t))-(1-F_{\boldsymbol{\gamma}}(t))|\,dt
+\int_{-\infty}^0 |t|^{r-1} |F_{\boldsymbol{\gamma}, n}(t)-F_{\boldsymbol{\gamma}}(t)|\,dt\Big)\\
&\le r2^{r-1} \sup_{f\in \mathcal{H}^+\cup \mathcal{H}^0\cup \mathcal{H}^-}
|\E_{\P_n}[f]-\E_{\P}[f]| \int (1\wedge |t|^{r-s-1}) \,dt\\
&\le c \sup_{f\in \mathcal{H}^+\cup \mathcal{H}^0\cup \mathcal{H}^-} |\E_{\P_n}[f]-\E_{\P}[f]|,
\end{align*}
where $c:=r2^{r-1} \int (1\wedge |t|^{r-s-1}) \,dt. $
We next find an envelope $F$ for $\mathcal{H}^+\cup \mathcal{H}^0\cup \mathcal{H}^-$: it is easy to see that 
\begin{align*}
\sup_{f\in \mathcal{H}^+} |f(\boldsymbol x)|\le \sup_{\|\boldsymbol{\gamma}\|=1}
|\boldsymbol x^\top \boldsymbol \gamma|^{s}\le \|\boldsymbol x\|_*^{s}.
\end{align*}
A similar argument for $\mathcal{H}^-$ yields
\begin{align*}
 F(\boldsymbol x):=\sup_{f\in \mathcal{H}^+\cup \mathcal{H}^0\cup \mathcal{H}^-}   |f(\boldsymbol x)|\le \|\boldsymbol x\|_*^{s} \vee 1.
\end{align*}

As $\mathcal{H}^0$ is VC-subgraph by Lemma \ref{lem:3}, [Van der Vaart, Wellner, Lemma 2.6.22] implies that $\mathcal{H}^+$ and $\mathcal{H}^-$ are also VC-subgraph: indeed note that
\begin{align*}
    \{(\boldsymbol x,u):\  u\le |t|^{s} \mathds{1}_{\{t\le \boldsymbol x^\top \boldsymbol\gamma \}} \}
    &=  \{(\boldsymbol x,u):\ t\le  \boldsymbol x^\top \boldsymbol\gamma,  u\le |t|^{s} \} \cup \{(\boldsymbol x,u):\ t> \boldsymbol x^\top \boldsymbol\gamma \} \\
    &= \{(\boldsymbol x,u):\ t\le \boldsymbol x^\top \boldsymbol\gamma \} \cap \{(\boldsymbol x,u):\ u\le |t|^{s} \}  \cup \{(\boldsymbol x,u):\ t> \boldsymbol x^\top \boldsymbol\gamma \},
\end{align*}
so the claim follows from the fact that $\mathcal{H}$ is VC, finite dimensional vector spaces of functions are VC subgraph  [Van der Vaart, Wellner, Lemma 2.6.15], and [Van der Vaart, Wellner, Lemma 2.6.17 (ii), (iii)].
Then, [Van der Vaart, Wellner, Theorem 2.6.7] states that for all $\epsilon\in (0,1),$
\begin{align*}
N(\epsilon \|F\|_{\Q,2},    \mathcal{H}^+\cup \mathcal{H}^0\cup \mathcal{H}^-, L_2(\Q)) \le C_1\Big(\frac{1}{\epsilon}\Big)^{2C_2-1},
\end{align*}
for universal constants $C_1, C_2>1$ and any probability measure $\Q$, for which $\|F\|_{\Q,2}>0.$ Thus,
\begin{align*}
\int_0^\infty \sup_{\Q} \sqrt{\log N(\epsilon \|F\|_{\Q,2},    \mathcal{H}^+\cup \mathcal{H}^0\cup \mathcal{H}^-, L_2(\Q))} \,d\epsilon<\infty,
\end{align*}
and together with $\Gamma<\infty$, [Van der Vaart, Wellner, Theorem 2.5.2] implies that $\mathcal{H}^+\cup \mathcal{H}^0\cup \mathcal{H}^-$ is Donsker. Thus, the convergence in distribution
\begin{align*}
\sqrt{n}\sup_{f \in \mathcal{H}^+\cup \mathcal{H}^0\cup \mathcal{H}^-} |\E_{\P_n}[f]-\E_{\P}[f]|  \Rightarrow \sup_{f \in \mathcal{H}^+\cup \mathcal{H}^0\cup \mathcal{H}^-} |G_f|,
\end{align*}
holds, where $(G_f)$ is a zero-mean Gaussian process satisfying 
\begin{align*}
\E[G_{f_1} G_{f_2}] = \E_{\P}[f_1 f_2] -\E_{\P}[f_1]\E_{\P}[f_2],
\end{align*}
for any $f_1, f_2 \in \mathcal{H}^+\cup \mathcal{H}^0\cup \mathcal{H}^-.$ Next, from the proof of [Van der Vaart, Wellner, Theorem 2.5.2] we obtain the inequality 
\begin{align*}
&\E_{\P} \left[ \sqrt{n}\sup_{f \in \mathcal{H}^+\cup \mathcal{H}^0\cup \mathcal{H}^-} \left|\E_{\P_n}[f]-\E_{\P}[f]\right| \right]\\
&\qquad \le C \sqrt{\Gamma} \int_0^\infty \sup_{\Q} \sqrt{\log N\left(\epsilon \|F\|_{\Q,2},    \mathcal{H}^+\cup \mathcal{H}^0\cup \mathcal{H}^-, L_2(\Q)\right)} \,d\epsilon.
\end{align*}
This shows the second claim.

\subsection{Proof of Theorem \ref{cor:generalization}}\label{proof:theorem6}

Note that $\iW_r(\P_n, \Q) \le \iW_r(\P_n, \P) + \iW_r(\P, \Q)$ by the triangle inequality because $\iW_r$ is a metric for any norm $\rho$. Then,
\begin{equation*}
    E_n := \left\{\iW_r(\P_n, \Q) > \delta_{n,r} + \epsilon \right \}  \subset \left \{\iW_r(\P_n, \P) > \delta_{n,r} \right\}  \subset \left\{\W_r(\P_n, \P) > \delta_{n,r}/c_{\rho, d} \right\}~,
\end{equation*}
which implies that the probability of $ E_n^c $ is greater than $1-3\alpha$ due to Theorem \ref{thm:rates} (or \ref{thm:1}). In the equation above, $c_{\rho, d}$ is defined via \eqref{eq:bar_W}. Finally, on the event $E_n^c$, we have
\begin{align*}
    \E_{\Q} \left[\left|Y-\mathbf{X}^\top {\boldsymbol \beta}\right|^r\right]^{1/r} &\le \sup_{\tilde{\P}\in B_{\delta_{n,r}+\epsilon}(\P_n)} \E_{\tilde{\P}} \left[\left|Y-\mathbf{X}^\top {\boldsymbol \beta}\right|^r\right]^{1/r} \\
    &= \E_{{\P_n}} \left[\left|Y-\mathbf{X}^\top {\boldsymbol \beta}\right|^r\right]^{1/r} + \left(\delta_{n,r} + \epsilon \right) \left(\sigma+\rho({\boldsymbol \beta}) \right),~ \quad \forall {\boldsymbol \beta}~,
\end{align*}
where the last equality follows from Theorem \ref{thm:penalized}.

\subsection{Proof of Theorem \ref{thm:pivotal}}\label{proof:theorem7}
 
The proof has three steps. The first two steps adapt what we learn in Section \ref{sec:rates} to our particular setup. The last step concludes based on observations about Theorems \ref{thm:penalized} and \ref{thm:rates}.

\textbf{Step 1:} Let us compare the $\rho$-MSW metric to the MSW metric using reasoning that is similar to our derivations in \eqref{eq:proj_wass_rep} -- \eqref{eq:bar_W}. Defining $\boldsymbol{\bar{\gamma}}_\sigma^\top = (\boldsymbol{\gamma}^\top,-\sigma)$, we obtain
\begin{align*}
\iW_r(\P, \tilde \P)&=\sup_{\boldsymbol{\gamma} \in  \mathbb{R}^d}
\frac{\|\boldsymbol{\bar{\gamma}}_\sigma\|}{\sigma+\rho(\boldsymbol{\gamma})}\frac{1}{\|\boldsymbol{\bar{\gamma}}_\sigma\|}
\mathcal{W}_r\left(\left[(\mathbf{X},Y/\sigma)^\top\boldsymbol{\bar{\gamma}}_\sigma\right]_*\mathbb{P},\,  [(\widetilde{\mathbf{X}},\tilde Y/\sigma)^\top\boldsymbol{\bar{\gamma}}_\sigma]_*\widetilde{\mathbb{P}}\right)\\
&\le \left( \max \left\{1/c_d, 1 \right\} \right) \sup_{ \|\boldsymbol{\gamma}\| = 1}
\mathcal{W}_r\left( \left[(\mathbf{X},Y/\sigma)^\top\boldsymbol{\gamma} \right]_*\mathbb{P},\,  [(\widetilde{\mathbf{X}},\tilde Y/\sigma)^\top \boldsymbol{\gamma}]_*\widetilde{\mathbb{P}}\right)\\
&= \left( \max \left\{1/c_d, 1 \right\} \right) \W_r \left(\P^\sigma, \, \tilde\P^\sigma \right),
\end{align*}
where $\P^\sigma:= (\mathbf{X},Y/\sigma)_*\P$ and $\tilde{\P}^\sigma:= (\mathbf{X},Y/\sigma)_*\tilde{\P}$.

\textbf{Step 2:} Let us apply Theorem \ref{thm:rates} to compute the rates for $\W_r(\P^\sigma, \tilde\P^\sigma)$, which depend on $ \E_{\P^\sigma}\left[  \left\|(\mathbf{X},Y) \right\|_*^{s}\right]$. Consider the following derivation 
\begin{equation*}
\E_{\P^\sigma}\left[  \left\|(\mathbf{X},Y) \right\|_*^{s}\right]  \le 2^{s-1} \left( \E_{\P^\sigma}\left[  \left\|(\mathbf{X},0)\right\|_*^{s}\right] + \E_{\P^\sigma}\left[  \left \|(0, \dots,0,Y) \right\|_*^{s}\right] \right)~.
\end{equation*}
Note that $\E_{\P^\sigma}\left[  \left\|(\mathbf{X},0)\right\|_*^{s}\right] = \E_{\P}\left[  \left\|(\mathbf{X},0) \right\|_*^{s}\right] =1$ and $$\E_{\P^\sigma}\left[  \left \|(0, \dots,0,Y) \right\|_*^{s}\right] = \E_{\P}\left[  \left \|(0, \dots, 0,Y/\sigma) \right\|_*^{s}\right] \le 1,$$ due to our assumptions. This implies that
\begin{equation*}
    \E_{\P^\sigma}\left[  \left\|(\mathbf{X},Y) \right\|_*^{s}\right] \le 2^{s}~.
\end{equation*}

\textbf{Step 3:} We note that Theorem \ref{thm:rates} still holds for any $\Gamma$ larger than $ \E_{\P}\left[  \left\|(\mathbf{X},Y) \right\|_2^{s}\right]$. In particular, we can consider $\Gamma = 2^{s}$ due to Step 2. In addition, we note that the conclusion of Theorem \ref{thm:penalized} is unaffected by the choice of $\sigma\ge 1$. These observations and the same argument presented in the proof of Theorem~\ref{cor:generalization} conclude our proof.  

\subsection{Proof of Corollary 2}

By Theorem \ref{eq:penalized} and conditions (i),\,(ii) above, \eqref{eq:H0} is equivalent to
    $$  \E_{\P} \left[\left|Y-\mathbf{X}^\top {\boldsymbol \beta}_1\right|^r\right]^{1/r} +  \delta \rho({\boldsymbol \beta_1}) \le \E_{\P} \left[\left|Y-\mathbf{X}^\top {\boldsymbol \beta}_2\right|^r\right]^{1/r} +  \delta \rho({\boldsymbol \beta_2})~. $$
The previous expression is equivalent to
$$ n^{-1/(2r)}\left(2\sigma + \rho({\boldsymbol \beta_1}) + \rho({\boldsymbol \beta_2})\right)T_n \le \Delta_n( {\boldsymbol \beta}_2 ) - \Delta_n( {\boldsymbol \beta}_1 )~,$$
where $\Delta_n( {\boldsymbol \beta}) = \E_{\P} \left[\left|Y-\mathbf{X}^\top {\boldsymbol \beta}\right|^r\right]^{1/r} - \E_{\P_n} \left[\left|Y-\mathbf{X}^\top {\boldsymbol \beta}\right|^r\right]^{1/r}$. By definition of $\iW_r$ and Theorem \ref{thm:penalized}, it follows that
$$ \frac{\Delta_n( {\boldsymbol \beta}_2)}{\sigma + \rho({\boldsymbol \beta_2})}  \le \iW_r(\P,\P_n),  \quad \text{and} \quad \frac{-\Delta_n( {\boldsymbol \beta}_1)}{\sigma + \rho({\boldsymbol \beta_1})}  \le \iW_r(\P_n,\P).$$
Therefore we have
$$ n^{-1/(2r)}\left(2\sigma + \rho({\boldsymbol \beta_1}) + \rho({\boldsymbol \beta_2})\right)T_n \le \iW_r(\P,\P_n) \left(2\sigma + \rho({\boldsymbol \beta_1})+ \rho({\boldsymbol \beta_2}) \right)~. $$
Using $\iW_r(\P,\P_n) \le c_{\rho,d} \W_r(\P_n, \P)$ from \eqref{eq:bar_W}, we derive that
$$  \P \left( T_n > c_{\rho,d} C^{1/r} \right) \le \P \left( c_{\rho,d} \W_r(\P,\P_n)  > c_{\rho,d} \, C^{1/r} n^{-1/(2r)} \right)~.  $$
Finally, Theorem  \ref{thm:1} implies that the above probability is bounded by $\alpha$.

\section{Auxiliary Results} \label{sec:aux_results_proofs}

We start with a preliminary discussion of $\iW_r$.

\begin{lemma}\label{lem:metric_new}
    Suppose that $\rho$ is a norm on $\mathbb{R}^d$. Let $\sigma>0$ and define the norm $\|\cdot\|_{\rho,\sigma}$ via   $$|\Tilde{\boldsymbol{\gamma}}|_{\rho,\sigma} := \sigma |\gamma_{d+1}| + \rho(\boldsymbol{\gamma}),$$ for $\tilde{\boldsymbol{\gamma}}=(\boldsymbol{\gamma}, \gamma_{d+1})$, where $\boldsymbol{\gamma}\in \mathbb R^d$ and $\gamma\in \R$. Then 
    $$ \widehat{W}_{r,\rho,\sigma}(\mathbb{P},\tilde{\mathbb{P}}) = \sup_{ \tilde{\boldsymbol{\gamma}} \in \mathbb{R}^{d+1} : \|\tilde{\boldsymbol{\gamma}}\|_{\rho,\sigma} = 1 }   \mathcal{W}_r( \tilde{\boldsymbol{\gamma}}_{*} \mathbb{P} ~, \tilde{\boldsymbol{\gamma}}_{*} \tilde{\mathbb{P}} )~,$$
    and there exist positive constants $c_1$ and $c_2$ (may depend on the dimension $d$) such that
    $$ c_1 \widehat{W}_{r,\rho,\sigma}(\mathbb{P},\tilde{\mathbb{P}}) \le \overline{\mathcal{W}}_{r}(\mathbb{P},\tilde{\mathbb{P}})  \le c_2 \widehat{W}_{r,\rho,\sigma}(\mathbb{P},\tilde{\mathbb{P}}).$$    
\end{lemma}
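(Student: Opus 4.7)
My plan is to express $\widehat{W}_{r,\rho,\sigma}$ as a supremum over $\|\cdot\|_{\rho,\sigma}$-unit vectors via a reparametrization trick, and then compare the result to $\overline{\mathcal{W}}_r$ using the equivalence of any two norms on the finite-dimensional space $\mathbb{R}^{d+1}$.

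For the first identity, I would start from the representation \eqref{eq:proj_wass_rep}. Setting $\bar{\boldsymbol{\gamma}} := (\boldsymbol{\gamma}, -1) \in \mathbb{R}^{d+1}$, one has $\|\bar{\boldsymbol{\gamma}}\|_{\rho,\sigma} = \sigma + \rho(\boldsymbol{\gamma})$, so
\begin{equation*}
\widehat{W}_{r,\rho,\sigma}(\mathbb{P},\tilde{\mathbb{P}}) = \sup_{\boldsymbol{\gamma} \in \mathbb{R}^d} \frac{1}{\|\bar{\boldsymbol{\gamma}}\|_{\rho,\sigma}} \, \mathcal{W}_r(\bar{\boldsymbol{\gamma}}_* \mathbb{P}, \bar{\boldsymbol{\gamma}}_* \tilde{\mathbb{P}}).
\end{equation*}
Introduce $\Psi(\tilde{\boldsymbol{\gamma}}) := \mathcal{W}_r(\tilde{\boldsymbol{\gamma}}_* \mathbb{P}, \tilde{\boldsymbol{\gamma}}_* \tilde{\mathbb{P}})$. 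The one-dimensional Wasserstein distance is positively $1$-homogeneous under scaling of the pushforward map, so the ratio $\Psi(\tilde{\boldsymbol{\gamma}})/\|\tilde{\boldsymbol{\gamma}}\|_{\rho,\sigma}$ depends only on the direction of $\tilde{\boldsymbol{\gamma}}$; moreover $\Psi(-\tilde{\boldsymbol{\gamma}}) = \Psi(\tilde{\boldsymbol{\gamma}})$, since $(-\tilde{\boldsymbol{\gamma}})_* \mu$ is just the reflection of $\tilde{\boldsymbol{\gamma}}_* \mu$ about zero. As $\boldsymbol{\gamma}$ ranges over $\mathbb{R}^d$, the normalized vectors $\bar{\boldsymbol{\gamma}}/\|\bar{\boldsymbol{\gamma}}\|_{\rho,\sigma}$ trace out exactly the $\|\cdot\|_{\rho,\sigma}$-unit vectors with strictly negative last coordinate; combining with the sign-invariance of $\Psi$ recovers all unit vectors with nonzero last coordinate. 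The remaining hyperplane $\{\tilde{\gamma}_{d+1} = 0\}$ is handled via continuity of $\tilde{\boldsymbol{\gamma}} \mapsto \Psi(\tilde{\boldsymbol{\gamma}})$ (which holds under finite $r$-th moments on $\mathbb{P}$ and $\tilde{\mathbb{P}}$) together with density.

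For the sandwich inequality, the display above together with \eqref{eqn:MSW_intro} yields the two homogeneous representations
\begin{equation*}
\widehat{W}_{r,\rho,\sigma}(\mathbb{P},\tilde{\mathbb{P}}) = \sup_{\tilde{\boldsymbol{\gamma}} \ne 0} \frac{\Psi(\tilde{\boldsymbol{\gamma}})}{\|\tilde{\boldsymbol{\gamma}}\|_{\rho,\sigma}}, \qquad \overline{\mathcal{W}}_r(\mathbb{P},\tilde{\mathbb{P}}) = \sup_{\tilde{\boldsymbol{\gamma}} \ne 0} \frac{\Psi(\tilde{\boldsymbol{\gamma}})}{\|\tilde{\boldsymbol{\gamma}}\|_2}.
\end{equation*}
Since $\|\cdot\|_{\rho,\sigma}$ and $\|\cdot\|_2$ are norms on the finite-dimensional space $\mathbb{R}^{d+1}$, they are equivalent: there exist $a_1, a_2 > 0$ (depending on $d$, $\rho$, $\sigma$) with $a_1 \|\tilde{\boldsymbol{\gamma}}\|_2 \le \|\tilde{\boldsymbol{\gamma}}\|_{\rho,\sigma} \le a_2 \|\tilde{\boldsymbol{\gamma}}\|_2$. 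Substituting termwise into each supremum gives $a_1 \widehat{W}_{r,\rho,\sigma} \le \overline{\mathcal{W}}_r \le a_2 \widehat{W}_{r,\rho,\sigma}$, so one may take $c_1 = a_1$ and $c_2 = a_2$.

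The main obstacle is the bookkeeping in the reparametrization: the affine slice $\{(\boldsymbol{\gamma}, -1) : \boldsymbol{\gamma} \in \mathbb{R}^d\}$ only sweeps out an open hemisphere of $\|\cdot\|_{\rho,\sigma}$-unit vectors, and a continuity/density argument is required to pass to the full sphere; some care is needed to justify continuity of $\Psi$ under the standing moment assumptions. Once the reparametrization is in place, the remaining steps amount to routine appeals to $1$-homogeneity of $\mathcal{W}_r$ and finite-dimensional norm equivalence.
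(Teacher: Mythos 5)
Your proposal is correct and follows essentially the same route as the paper's proof: rewrite $\widehat{\mathcal{W}}_{r,\rho,\sigma}$ via positive homogeneity of $\tilde{\boldsymbol{\gamma}}\mapsto\mathcal{W}_r(\tilde{\boldsymbol{\gamma}}_*\mathbb{P},\tilde{\boldsymbol{\gamma}}_*\tilde{\mathbb{P}})$ as a supremum over the $\|\cdot\|_{\rho,\sigma}$-unit sphere, then obtain the sandwich from equivalence of norms on $\mathbb{R}^{d+1}$. If anything, your explicit continuity/density treatment of the slice $\{\tilde{\gamma}_{d+1}=0\}$ is more careful than the paper's chain of (in)equalities, which passes over that point silently.
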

\begin{proof}
    Denote $\mathbf{Z} = (\mathbf{X}^\top,Y)$ and $ \Tilde{\mathbf{Z}} = (\Tilde{\mathbf{X}}^\top, \Tilde{Y})$.  By definition 
    \begin{align*}
        \widehat{W}_{r,\rho,\sigma}(\mathbb{P},\tilde{\mathbb{P}})  &= \sup_{\boldsymbol{\gamma} \in \mathbb{R}^d } \frac{1}{\sigma + \rho(\boldsymbol{\gamma})} \inf_{\pi\in \Pi(\P,\Q) } \E_{\pi} \left[ | \mathbf{Z}^\top (\boldsymbol{\gamma},-1) -  \Tilde{\mathbf{Z}}^\top (\boldsymbol{\gamma},-1)|^r \right]^{1/r} \\
        &= \sup_{\boldsymbol{\gamma}\in \mathbb{R}^d } \frac{1}{| (\boldsymbol{\gamma},-1)|_{\rho,\sigma}} \inf_{\pi \in \Pi(\P,\Q)} \E_{\pi} \left[ | \mathbf{Z}^\top (\boldsymbol{\gamma},-1) -  \Tilde{\mathbf{Z}}^\top (\boldsymbol{\gamma},-1) |^r \right]^{1/r} \\
        &= \sup_{\boldsymbol{\gamma} \in \mathbb{R}^d }  \inf_{\pi\in \Pi(\P,\Q) } \E_{\pi} \left[ \left| (\mathbf{Z} -\Tilde{\mathbf{Z}})^\top \left(\frac{\boldsymbol{\gamma}}{\| (\boldsymbol{\gamma},-1)\|_{\rho,\sigma}},\frac{-1}{\| (\boldsymbol{\gamma},-1)\|_{\rho,\sigma}} \right ) \right|^r  \right]^{1/r} \\
        &\le \sup_{ \tilde{\boldsymbol{\gamma}}=(\boldsymbol{\gamma}, \gamma_{d+1}) \in \mathbb{R}^{d+1} : \|\tilde{\boldsymbol{\gamma}}\|_{\rho,\sigma} = 1 } \inf_{\pi \in \Pi(\P,\Q)} \E_{\pi} \left[ | \mathbf{Z}^\top  \Tilde{\boldsymbol{\gamma}} - \Tilde{\mathbf{Z}}^\top \Tilde{\boldsymbol{\gamma}} |^{r}  \right]^{1/r}  \\
        &= \sup_{ \tilde{\boldsymbol{\gamma}}=(\boldsymbol{\gamma}, \gamma_{d+1}) \in \mathbb{R}^{d+1} : \|\tilde{\boldsymbol{\gamma}}\|_{\rho,\sigma} = 1 }   \mathcal{W}_r( \tilde{\boldsymbol{\gamma}}_{*} \mathbb{P} ~, \tilde{\boldsymbol{\gamma}}_{*} \tilde{\mathbb{P}} ) \\
        &= \sup_{ \tilde{\boldsymbol{\gamma}}=(\boldsymbol{\gamma}, \gamma_{d+1}) \in \mathbb{R}^{d+1} } \frac{1}{ |\gamma_{d+1}| (\sigma + \rho(\boldsymbol{\gamma}/|\gamma_{d+1}|))} \inf_{\pi } \E_{\pi} \left[ | \mathbf{Z}^\top  \Tilde{\boldsymbol{\gamma}} - \Tilde{\mathbf{Z}}^\top \Tilde{\boldsymbol{\gamma}} |^{r}  \right]^{1/r} \\
        &= \sup_{ \tilde{\boldsymbol{\gamma}}=(\boldsymbol{\gamma}, \gamma_{d+1}) \in \mathbb{R}^{d+1} } \frac{1}{ (\sigma + \rho(\boldsymbol{\gamma}/|\gamma_{d+1}|))} \inf_{\pi\in \Pi(\P,\Q) } \E_{\pi} \left[ \left| (\mathbf{Z} - \Tilde{\mathbf{Z}}) ^\top \left(\frac{\boldsymbol{\gamma}}{|\gamma_{d+1}|},\frac{\gamma_{d+1}}{| \gamma_{d+1}| } \right )   \right|^{r}  \right]^{1/r} \\
        & \le \widehat{W}_{r,\rho,\sigma}(\mathbb{P},\tilde{\mathbb{P}})~.
    \end{align*}
    This proves our first result. Now, to prove the second result we rely on the following representations.
\begin{align*}
    \overline{\mathcal{W}}_{r}(\mathbb{P},\tilde{\mathbb{P}}) &= \sup_{ \tilde{\boldsymbol{\gamma}} \in \mathbb{R}^{d+1} : \|\tilde{\boldsymbol{\gamma}}\|_{2} = 1 }   \mathcal{W}_r( \tilde{\boldsymbol{\gamma}}_{*} \mathbb{P} ~, \tilde{\boldsymbol{\gamma}}_{*} \tilde{\mathbb{P}} ) = \sup_{ \tilde{\boldsymbol{\gamma}} \in \mathbb{R}^{d+1}} \frac{1}{\|\tilde{\boldsymbol{\gamma}}\|_2}  \mathcal{W}_r( \tilde{\boldsymbol{\gamma}}_{*} \mathbb{P} ~, \tilde{\boldsymbol{\gamma}}_{*} \tilde{\mathbb{P}} ), 
\end{align*}
and
\begin{align*}
    \widehat{W}_{r,\rho,\sigma}(\mathbb{P},\tilde{\mathbb{P}}) = \sup_{ \tilde{\boldsymbol{\gamma}} \in \mathbb{R}^{d+1}} \frac{1}{\|\tilde{\boldsymbol{\gamma}}\|_{\rho,\sigma}}  \mathcal{W}_r( \tilde{\boldsymbol{\gamma}}_{*} \mathbb{P} ~, \tilde{\boldsymbol{\gamma}}_{*} \tilde{\mathbb{P}} )~.
\end{align*}
Because $\|\cdot\|_2$ and $\|\cdot\|_{\rho,\sigma}$ are norms on $\mathbb{R}^{d+1}$, it follows that there exist positive constants $c_1$ and $c_2$ such that
$$ \frac{c_1}{\|\tilde{\boldsymbol{\gamma}}\|_{\rho,\sigma}} \le \frac{1}{\|\tilde{\boldsymbol{\gamma}}\|_{2}} \le \frac{c_2}{\|\tilde{\boldsymbol{\gamma}}\|_{\rho,\sigma}}~,\quad \forall \Tilde{\boldsymbol{\gamma}} \in \mathbb{R}^{d+1}~.$$
We conclude the second result by using the previous inequality and the representations for $\overline{\mathcal{W}}_{r}(\mathbb{P},\tilde{\mathbb{P}})$ and $\widehat{W}_{r,\rho,\sigma}(\mathbb{P},\tilde{\mathbb{P}})$ presented above.
\end{proof}

More generally, $\iW_r$ is always a metric, as the following lemma shows:

\begin{lemma}\label{lem:metric} 
The $\rho$-max-sliced Wasserstein $\iW_r$ distance is a metric.
\end{lemma}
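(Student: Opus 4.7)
The plan is to verify each of the four metric axioms for $\iW_{r,\rho,\sigma}$ by reducing them pointwise in $\boldsymbol\gamma$ to the corresponding property of the one-dimensional Wasserstein distance $\mathcal{W}_r$, which is already known to be a metric on $\mathcal{P}_r(\R)$. I would start from the pushforward representation in \eqref{eq:proj_wass_rep}:
\[
\iW_{r,\rho,\sigma}(\P,\tilde\P) \;=\; \sup_{\boldsymbol\gamma \in \R^d} \frac{1}{\sigma+\rho(\boldsymbol\gamma)}\, \mathcal{W}_r\!\left( [(\mathbf X, Y)^\top \boldsymbol{\bar\gamma}]_* \P,\; [(\widetilde{\mathbf X},\tilde Y)^\top \boldsymbol{\bar\gamma}]_* \tilde\P \right),
\]
where $\boldsymbol{\bar\gamma} = (\boldsymbol\gamma^\top,-1)^\top$. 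Since $\sigma+\rho(\boldsymbol\gamma) \ge \sigma \ge 1$ and the one-dimensional pushforwards have finite $r$-th moment whenever $\P, \tilde\P \in \mathcal{P}_r(\R^{d+1})$, each term in the supremum is a finite, nonnegative number.

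Non-negativity, symmetry, and the triangle inequality then follow directly from the analogous properties of $\mathcal{W}_r$ applied inside the supremum. Non-negativity is immediate, and symmetry holds because $\mathcal{W}_r(\mu,\nu)=\mathcal{W}_r(\nu,\mu)$ for each fixed $\boldsymbol\gamma$. For the triangle inequality, writing $\P_{\boldsymbol{\bar\gamma}}$ for the pushforward of $\P$ under $\mathbf z \mapsto \mathbf z^\top \boldsymbol{\bar\gamma}$, the one-dimensional triangle inequality yields
\[
\frac{\mathcal{W}_r(\P_{\boldsymbol{\bar\gamma}},\tilde\P_{\boldsymbol{\bar\gamma}})}{\sigma+\rho(\boldsymbol\gamma)} \;\le\; \frac{\mathcal{W}_r(\P_{\boldsymbol{\bar\gamma}},\Q_{\boldsymbol{\bar\gamma}})}{\sigma+\rho(\boldsymbol\gamma)} + \frac{\mathcal{W}_r(\Q_{\boldsymbol{\bar\gamma}},\tilde\P_{\boldsymbol{\bar\gamma}})}{\sigma+\rho(\boldsymbol\gamma)} \;\le\; \iW_{r,\rho,\sigma}(\P,\Q) + \iW_{r,\rho,\sigma}(\Q,\tilde\P),
\]
and taking the supremum over $\boldsymbol\gamma$ on the left-hand side completes the argument.

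The main obstacle is the identity of indiscernibles. The ``if'' direction is clear: coupling $(\mathbf X, Y)$ with itself gives $\iW_{r,\rho,\sigma}(\P,\P)=0$. For the converse, suppose $\iW_{r,\rho,\sigma}(\P,\tilde\P)=0$. Since $\sigma+\rho(\boldsymbol\gamma) > 0$, this forces
\[
\mathcal{W}_r\!\left( [(\mathbf X, Y)^\top \boldsymbol{\bar\gamma}]_* \P,\; [(\widetilde{\mathbf X},\tilde Y)^\top \boldsymbol{\bar\gamma}]_* \tilde\P \right) = 0 \qquad \text{for every } \boldsymbol\gamma \in \R^d,
\]
and because $\mathcal{W}_r$ is a metric on $\mathcal{P}_r(\R)$, the one-dimensional pushforwards coincide for every $\boldsymbol\gamma$. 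I would then apply the Cram\'er--Wold device in characteristic-function form: equality of these pushforwards gives $\phi_\P(t\boldsymbol\gamma,-t) = \phi_{\tilde\P}(t\boldsymbol\gamma,-t)$ for all $\boldsymbol\gamma \in \R^d$ and $t \in \R$, which pins down $\phi_\P = \phi_{\tilde\P}$ at every point of $\R^{d+1}$ whose last coordinate is nonzero. Continuity of characteristic functions then extends the equality to all of $\R^{d+1}$, so $\P=\tilde\P$, completing the proof.
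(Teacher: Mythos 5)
Your proof is correct, and for non-negativity, symmetry, and the triangle inequality it follows exactly the paper's route: rewrite $\iW_{r,\rho,\sigma}$ via the pushforward representation \eqref{eq:proj_wass_rep} and reduce each axiom, pointwise in $\boldsymbol{\gamma}$, to the corresponding property of the one-dimensional $\mathcal{W}_r$. The only place where you diverge is the identity of indiscernibles. Both you and the paper first deduce that the projections along all directions $\boldsymbol{\bar\gamma}=(\boldsymbol{\gamma}^\top,-1)^\top$ coincide; you then finish with the Cram\'er--Wold device in characteristic-function form, noting that $\phi_{\P}(t\boldsymbol{\gamma},-t)=\phi_{\tilde\P}(t\boldsymbol{\gamma},-t)$ determines the characteristic functions at every point with nonzero last coordinate and extending to all of $\R^{d+1}$ by continuity of characteristic functions. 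The paper instead stays inside optimal transport: it approximates an arbitrary unit direction $\tilde{\boldsymbol{\gamma}}$ with $\tilde\gamma_{d+1}\le 0$ by normalized vectors $\boldsymbol{\bar\gamma}_n/\|\boldsymbol{\bar\gamma}_n\|_2$, invokes continuity of the sliced Wasserstein distance in the direction to conclude the projections agree for those $\tilde{\boldsymbol{\gamma}}$, and covers the remaining directions by the symmetry $\tilde{\boldsymbol{\gamma}}\mapsto-\tilde{\boldsymbol{\gamma}}$. Your Fourier argument buys you a completely standard continuity step (characteristic functions are always continuous), whereas the paper's limiting step tacitly requires continuity of $\tilde{\boldsymbol{\gamma}}\mapsto\mathcal{W}_r(\tilde{\boldsymbol{\gamma}}_*\P,\tilde{\boldsymbol{\gamma}}_*\tilde\P)$, which holds for measures in $\mathcal{P}_r$ but deserves a one-line justification; conversely, the paper's argument avoids any appeal to Fourier-analytic uniqueness and keeps the proof self-contained within the Wasserstein framework. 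Either way the conclusion $\P=\tilde\P$ is sound.
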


\begin{proof}

Recall from \eqref{eq:proj_wass_rep} that
\begin{align*}
    \iW_{r}(\P, \tilde \P)=\sup_{\boldsymbol{\gamma} \in  \mathbb{R}^d} \, 
\frac{1}{\sigma+\rho(\boldsymbol{\gamma})} \,
\mathcal{W}_r\left(\left[(\mathbf{X},Y)^{\top}\boldsymbol{\bar{\gamma}}\right]_*\mathbb{P},\,  \left[(\widetilde{\mathbf{X}},\tilde Y)^{\top}\boldsymbol{\bar{\gamma}}\right]_*\widetilde{\mathbb{P}}\right),
\end{align*}
where $\boldsymbol{\bar{\gamma}}^\top = (\boldsymbol{\gamma}^\top,-1)$. Because the one-dimensional Wasserstein metric, $\mathcal{W}_r$, is non-negative, symmetric and satisfies the triangle inequality,  the same is true for $\iW_r$. It remains to show that $\iW_r(\P, \tilde\P)=0$ implies $\P=\tilde \P.$ For this, we first realize that because $\sigma+\rho(\boldsymbol{\gamma})>0$, it follows that $\iW_r(\P, \tilde\P)=0$ implies 
\begin{align}
\mathcal{W}_r\left(\left[(\mathbf{X},Y)^{\top}\boldsymbol{\bar{\gamma}}\right]_*\mathbb{P},\,  \left[(\widetilde{\mathbf{X}},\tilde Y)^{\top}\boldsymbol{\bar{\gamma}}\right]_*\widetilde{\mathbb{P}}\right)=0, \qquad \forall \, \boldsymbol{\gamma}\in \R^d.
\end{align}
Now, for any $\tilde{\boldsymbol{\gamma}}\in \R^{d+1}$ satisfying $\|\tilde {\boldsymbol{\gamma}} \|_2=1$ and $\tilde{\gamma}_{d+1}\le 0$, there exists a sequence $\{\boldsymbol{\gamma}_n\}_{n\in \mathbb N}$ in $\R^d$ such that 
\begin{align*}
\lim_{n\to \infty} \frac{\bar{\boldsymbol{\gamma}}_n}{\|\bar{\boldsymbol{\gamma}}_n\|_2} =\tilde{\boldsymbol{\gamma}},
\end{align*}
where again $\boldsymbol{\bar{\gamma}}_n^\top := (\boldsymbol{\gamma}_n^\top,-1)$. 
By continuity, this implies
\begin{align*}
\mathcal{W}_r\left(\left[(\mathbf{X},Y)^{\top}\boldsymbol{\tilde{\gamma}}\right]_*\mathbb{P},\,  \left[(\widetilde{\mathbf{X}},\tilde Y)^{\top}\boldsymbol{\tilde{\gamma}}\right]_*\widetilde{\mathbb{P}}\right)=0, \qquad \forall \, \tilde{\boldsymbol{\gamma}}\in \R^{d+1} \text{ with } \tilde{\gamma}_{d+1}\le 0;
\end{align*}
and because $\left[(\mathbf{X},Y)^{\top}\boldsymbol{\tilde{\gamma}}\right]_*\mathbb{P} = \left[(\mathbf{X},Y)^{\top}\boldsymbol{\tilde{\gamma}}\right]_*\widetilde{\mathbb{P}}$ implies $\left[-(\mathbf{X},Y)^{\top}\boldsymbol{\tilde{\gamma}}\right]_*\mathbb{P} = \left[-(\mathbf{X},Y)^{\top}\boldsymbol{\tilde{\gamma}}\right]_*\widetilde{\mathbb{P}}$, we have
\begin{align*}
 \mathcal{W}_r\left(\left[(\mathbf{X},Y)^{\top}\boldsymbol{\tilde{\gamma}}\right]_*\mathbb{P},\,  \left[(\widetilde{\mathbf{X}},\tilde Y)^{\top}\boldsymbol{\tilde{\gamma}}\right]_*\widetilde{\mathbb{P}}\right)=0, \qquad \forall \, \tilde{\boldsymbol{\gamma}}\in \R^{d+1}.   
\end{align*}
Positivity of $\iW_r$ now follows from the fact that $\mathcal{W}_r$ is positive. This concludes the proof.
\end{proof}

\begin{lemma}[{cf.\ \cite[Theorem 4.10]{wainwright2019high}, \cite[Chapter 4]{devroye2001combinatorial}}]
\label{lem:3}
Let
\begin{align}
\label{eq:H_def}
\mathcal{H}:= \left\{\mathds{1}_{\left\{\mathbf{x}^\top \boldsymbol{\gamma} \, \le \, t \right\}}:  \boldsymbol{\gamma}\in \mathbb{R}^{d+1}, \, t\in \R \right\}
\end{align}
be the set of indicator functions of half spaces.
Then, with probably at least $1-\alpha$,
\begin{align*}
 \sup_{(\boldsymbol{\gamma},t)\in \R^{d+1}\times\R}   \left|F_{\boldsymbol{\gamma}, n}(t)-F_{\boldsymbol{\gamma}}(t) \right| = \sup_{f\in \mathcal{H}} \left|\E_{\P_n}[f]-\E_{\P}[f]\right| \le  180\sqrt{\frac{d+2}{n}}+\sqrt{\frac{2}{n}\log\left(\frac{1}{\alpha}\right)}.
\end{align*}
\end{lemma}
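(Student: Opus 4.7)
The statement is a standard VC-type uniform concentration inequality, and the proof strategy breaks into three well-separated pieces: (i) identify $\mathcal{H}$ as a VC class and bound its VC dimension, (ii) bound the expected supremum $\E[\sup_{f\in\mathcal{H}}|P_nf - Pf|]$ in terms of that VC dimension, and (iii) concentrate the supremum around its mean via bounded differences.

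First, I would observe that $\mathcal{H}$ consists of indicators of closed affine half-spaces in $\R^{d+1}$, since $\{\mathbf{x}\in\R^{d+1}: \mathbf{x}^\top\boldsymbol{\gamma}\le t\}$ ranges over all such half-spaces as $(\boldsymbol\gamma,t)$ ranges over $\R^{d+1}\times\R$. A classical Radon-type argument (see, e.g., \cite{devroye2001combinatorial}, Chapter 4) shows that this class has VC dimension exactly $d+2$. For the first equality in the statement, I would note that writing $f_{\boldsymbol\gamma,t}(\mathbf{z}) = \mathds{1}\{\mathbf{z}^\top\boldsymbol{\gamma}\le t\}$ and taking $\mathbf{z}=(\mathbf{x},y)\in\R^{d+1}$ we get $\E_{\P_n}[f_{\boldsymbol\gamma,t}]=F_{\boldsymbol\gamma,n}(t)$ and $\E_{\P}[f_{\boldsymbol\gamma,t}]=F_{\boldsymbol\gamma}(t)$, so the two suprema coincide trivially.

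Second, by a symmetrization argument followed by Dudley's entropy integral applied to the (uniform) covering numbers of a VC class, one obtains an explicit constant-factor bound of the form
\begin{equation*}
\E\Bigl[\sup_{f\in\mathcal{H}}|\E_{\P_n}f - \E_{\P}f|\Bigr]\;\le\;C_0\sqrt{\frac{d+2}{n}},
\end{equation*}
for an explicit universal constant $C_0$; the value $180$ is the one obtained from a careful tracking of the constants in Dudley's integral together with Haussler's covering bound, as written out in \cite[Theorem~4.10]{wainwright2019high}. This is the main quantitative input and I would simply invoke it rather than redoing the entropy integral by hand.

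Third, to promote the expectation bound to a high-probability bound, I would apply McDiarmid's bounded differences inequality to the functional $(\mathbf{Z}_1,\dots,\mathbf{Z}_n)\mapsto \sup_{f\in\mathcal{H}}|\E_{\P_n}f-\E_{\P}f|$. Since the functions in $\mathcal{H}$ take values in $\{0,1\}$, changing a single $\mathbf{Z}_i$ alters this supremum by at most $1/n$, so the bounded-differences constants sum to $n\cdot(1/n)^2=1/n$. McDiarmid then yields that with probability at least $1-\alpha$,
\begin{equation*}
\sup_{f\in\mathcal{H}}|\E_{\P_n}f-\E_{\P}f|\;\le\;\E\Bigl[\sup_{f\in\mathcal{H}}|\E_{\P_n}f-\E_{\P}f|\Bigr]+\sqrt{\frac{2}{n}\log\!\Bigl(\frac{1}{\alpha}\Bigr)}.
\end{equation*}
Combining this with the expectation bound from the second step gives the stated inequality. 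The main ``obstacle'' is purely bookkeeping: producing the explicit constant $180$ requires choosing a specific version of Dudley's integral/Haussler bound, but since the paper cites \cite[Theorem~4.10]{wainwright2019high} and \cite[Chapter~4]{devroye2001combinatorial}, I would just cite these rather than reprove them.
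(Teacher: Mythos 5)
Your proposal is correct and follows essentially the same route as the paper: VC dimension $d+2$ for half-spaces, symmetrization plus a Dudley/Haussler entropy-integral bound giving the $180\sqrt{(d+2)/n}$ term, and a bounded-differences concentration step giving $\sqrt{2\log(1/\alpha)/n}$ (the paper packages the last two ingredients via Wainwright's Theorem 4.10, which is exactly McDiarmid around twice the Rademacher complexity). One minor bookkeeping remark: the constant $180$ is not stated in Wainwright's Theorem 4.10 itself but comes from the explicit Rademacher-complexity computation carried out in the paper via Devroye--Lugosi (chaining constant $12$ times an entropy-integral bound of $7.5\sqrt{d+2}$, giving $90\sqrt{(d+2)/n}$, then doubled by symmetrization).
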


\begin{proof}
By \cite[Theorem 4.10]{wainwright2019high}, we have
\begin{align*}
\P\left(\sup_{f\in \mathcal{H}} \left|\E_{\P_n}\left[f\right]-\E_{\P}\left[f\right] \right|> 2\mathcal{R}_n(\mathcal{H})+\epsilon\right) \le e^{-n\epsilon^2/2},
\end{align*}
where 
\begin{align*}
\mathcal{R}_n(\mathcal{H}):=\mathbb{E}_{\P, \varepsilon}\left[\sup _{f \in \mathcal{H}}\left|\frac{1}{n} \sum_{i=1}^n \varepsilon_i f\left(\mathbf{X}_i\right)\right|\right],
\end{align*}
is the Rademacher complexity of $\mathcal{H}$. Next, following \cite[statement and proof of Theorem 3.2]{devroye2001combinatorial}, we obtain
\begin{align}
\label{eq:Rn_bound}
\mathcal{R}_n(\mathcal{H})\le \frac{12}{\sqrt{n}} \max_{\mathbf{x}_1, \dots, \mathbf{x}_n\in \R^{d+1}} \int_0^1 \sqrt{2\log N \left(\textsf{r}, \mathcal{H}(\mathbf{x}_1^n) \right)}\,d\textsf{r},
\end{align}
where $\mathbf{x}_1^n:= \{\mathbf{x}_1, \dots, \mathbf{x}_n\}$ and 
\begin{align*}
\mathcal{H}(\mathbf{x}_1^n):=\left\{\left(f(\mathbf{x}_1), \dots, f(\mathbf{x}_n)\right): f\in \mathcal{H}\right\},
\end{align*}
and $N(\textsf{r},B)$ is defined as the cardinality of the smallest cover for any set $B\subseteq\{0,1\}^n$ of radius $\textsf{r}$ with respect to the distance 
\begin{align*}
\rho(\mathbf{b},\mathbf{d}):=\sqrt{\frac{1}{n} \sum_{i=1}^n \mathds{1}_{\{b_i\neq d_i\}}},
\end{align*}
where in the above, vectors $\mathbf{b},\mathbf{d}\in B$.
\cite[Theorem 4.3]{devroye2001combinatorial} states that
\begin{align}
\label{eq:bound1}
N(r, \mathcal{H}(\mathbf{x}_1^n)) \le \left( \frac{4e}{r^2} \right)^{V/(1-1/e)} =  \left( \frac{4e}{r^2} \right)^{Ve/(e-1)},
\end{align}
where $V$ is the VC-dimension of $\mathcal{H}$. Furthermore, by \cite[Corollary 4.2]{devroye2001combinatorial}, the VC-dimension of $\mathcal{H}$ is bounded by $d+2$, i.e.\ $V \leq d+2$. In conclusion, using \eqref{eq:bound1},
\begin{align}
\label{eq:bound2}
\log N\left(\textsf{r}, \mathcal{H}(\mathbf{x}_1^n\right)) \le \frac{eV}{e-1} \log\left(\frac{4e}{\textsf{r}^2}\right)\le \frac{e(d+2)}{e-1} \log\left(\frac{4e}{\textsf{r}^2}\right).
\end{align}
Following \cite[proof of Theorem 3.3]{devroye2001combinatorial} we estimate
\begin{align}
\label{eq:bound3}
\int_0^1 \sqrt{\log\left(\frac{4e}{\textsf{r}^2}\right) }\,d\textsf{r} \le \sqrt{2\pi e},
\end{align}
so that from \eqref{eq:bound2} and \eqref{eq:bound3}, we have
\begin{align*}
\int_0^1 \sqrt{2\log N(\textsf{r}, \mathcal{H}(\mathbf{x}_1^n))}\,d\textsf{r} \le 2e\sqrt{\frac{(d+2)\pi}{e-1}}\le 7.5\sqrt{d+2}.
\end{align*}
Using \eqref{eq:Rn_bound}, this yields
\begin{align*}
\mathcal{R}_n(\mathcal{H})\le 90\sqrt{\frac{d+2}{n}}.
\end{align*}
\end{proof}

\begin{lemma}\label{lem:key}
Define 
\begin{align*}
\Gamma_n:= \sup_{ \|\boldsymbol{\gamma}\|=1 }\mathbb{E}_{\P_n}\left [\left|(\mathbf{X},Y)^\top\boldsymbol{\gamma}\right|^s\right] = \sup_{ \|\boldsymbol{\gamma}\|=1 }\frac{1}{n}\sum_{i=1}^n  \left|(\mathbf{X},Y)_i^\top \boldsymbol\gamma\right|^s.
\end{align*}
For any $k \in \R^+$, we have
\begin{equation*}
\begin{split}
&\W_r(\P_n,\P)^r \le 
r(2k)^r\sup_{(\boldsymbol{\gamma},t)\in \R^{d+1}\times \R} \left|F_{\boldsymbol{\gamma},n}(t)-F_{\boldsymbol{\gamma}}(t))\right|\\
&+\frac{2^rr\sqrt{\Gamma \vee \Gamma_n}}{s/2-r}k^{r-s/2} \Bigg[ \sup_{(\boldsymbol{\gamma},t)\in \R^{d+1}\times \R} \frac{(F_{\boldsymbol{\gamma}}(t)-F_{\boldsymbol{\gamma}, n}(t))^+}{\sqrt{F_{\boldsymbol{\gamma}}(t)(1-F_{\boldsymbol{\gamma},n}(t))}} + \sup_{(\boldsymbol{\gamma},t)\in \R^{d+1}\times\R} \frac{(F_{\boldsymbol{\gamma}, n}(t)-F_{\boldsymbol{\gamma}}(t))^+}{\sqrt{F_{\boldsymbol{\gamma}, n}(t)(1-F_{\boldsymbol{\gamma}}(t))}}\Bigg],
\end{split}
\end{equation*}
with the convention that $0/0=0$ and the notation $x^+ := \max\{0,x\}$.
\end{lemma}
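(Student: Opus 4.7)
My plan is to reduce the bound on $\W_r(\P_n,\P)^r=\sup_{\|\boldsymbol{\gamma}\|=1}\mathcal{W}_r(\P_{\boldsymbol{\gamma},n},\P_{\boldsymbol{\gamma}})^r$ to a one-dimensional problem for each direction $\boldsymbol{\gamma}$, estimate the projected Wasserstein distance uniformly, and then take the supremum. The starting point is the integration-by-parts representation
$$ \mathcal{W}_r(\P_{\boldsymbol{\gamma},n},\P_{\boldsymbol{\gamma}})^r \le r\cdot 2^{r-1}\int_{\R}|t|^{r-1}|F_{\boldsymbol{\gamma},n}(t)-F_{\boldsymbol{\gamma}}(t)|\,dt, $$
which is exactly the inequality invoked in the proof of Theorem~\ref{thm:ass} (drawn from \cite[Proposition~7.14]{bobkov2019one}). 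I would then split this integral at $|t|=k$ and bound the ``bulk'' and ``tail'' pieces separately.

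On the bulk $|t|\le k$, the obvious move is to pull $|F_{\boldsymbol{\gamma},n}(t)-F_{\boldsymbol{\gamma}}(t)|$ out as its uniform supremum, producing a contribution proportional to $k^r$ (via $\int_{-k}^k |t|^{r-1}\,dt = 2k^r/r$) times the first supremum appearing in the statement. On the tail $|t|>k$, I would decompose $|F_{\boldsymbol{\gamma},n}-F_{\boldsymbol{\gamma}}|=(F_{\boldsymbol{\gamma}}-F_{\boldsymbol{\gamma},n})^+ + (F_{\boldsymbol{\gamma},n}-F_{\boldsymbol{\gamma}})^+$ and apply the self-normalized inequalities built into the definitions of $I_2^\pm$, namely
$(F_{\boldsymbol{\gamma}}-F_{\boldsymbol{\gamma},n})^+\le I_2^+\sqrt{F_{\boldsymbol{\gamma}}(t)(1-F_{\boldsymbol{\gamma},n}(t))}$ and the analogous bound with $I_2^-$. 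For $t>k$ I would drop the factors $F_{\boldsymbol{\gamma}}(t)\le 1$ and $1-F_{\boldsymbol{\gamma},n}(t)\le 1$, and then invoke Markov's inequality on the surviving tail factor: since $\|\boldsymbol{\gamma}\|=1$ the dual-norm bound $|(\mathbf{X},Y)^\top\boldsymbol{\gamma}|\le\|(\mathbf{X},Y)\|_*$ gives $1-F_{\boldsymbol{\gamma}}(t)\le \Gamma/t^s$ and $1-F_{\boldsymbol{\gamma},n}(t)\le \Gamma_n/t^s$; the case $t<-k$ is symmetric, using $F_{\boldsymbol{\gamma}}(t)\le \Gamma/|t|^s$ and $F_{\boldsymbol{\gamma},n}(t)\le \Gamma_n/|t|^s$.

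This produces a uniform estimate of the form $|F_{\boldsymbol{\gamma},n}(t)-F_{\boldsymbol{\gamma}}(t)|\le (I_2^++I_2^-)\sqrt{\Gamma\vee\Gamma_n}/|t|^{s/2}$ valid on $|t|>k$, and then the elementary computation
$$ \int_{|t|>k}|t|^{r-1-s/2}\,dt \;=\; \frac{2k^{r-s/2}}{s/2-r}, $$
finite exactly because of the hypothesis $s>2r$, delivers the promised $k^{r-s/2}$ factor with the stated $1/(s/2-r)$ coefficient. Assembling the bulk and tail contributions and taking the supremum over unit-norm $\boldsymbol{\gamma}$ gives the claim.

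The only delicate step is the tail estimate; there the main point that must be handled carefully is that the $s$-th moment of the scalar projection $(\mathbf{X},Y)^\top\boldsymbol{\gamma}$ is dominated by the envelopes $\Gamma$ and $\Gamma_n$ \emph{uniformly} over unit-norm $\boldsymbol{\gamma}$, which is precisely why $\Gamma$ is defined using the dual norm $\|\cdot\|_*$ and why $\Gamma_n$ is defined with a supremum over $\|\boldsymbol{\gamma}\|=1$. Everything else is routine bookkeeping with Markov's inequality, the integration-by-parts representation, and a careful split of the integral.
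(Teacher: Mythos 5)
Your proposal is correct and follows essentially the same route as the paper's proof: the Bobkov--Ledoux inequality $\mathcal{W}_r(\P_{\boldsymbol{\gamma},n},\P_{\boldsymbol{\gamma}})^r\le r2^{r-1}\int|t|^{r-1}|F_{\boldsymbol{\gamma},n}(t)-F_{\boldsymbol{\gamma}}(t)|\,dt$, a split of the integral at $|t|=k$, self-normalization of the positive parts on the tails combined with Markov's inequality (with the dual-norm bound giving the uniform envelopes $\Gamma$ and $\Gamma_n$), and the integral $\int_{|t|>k}|t|^{r-1-s/2}\,dt=2k^{r-s/2}/(s/2-r)$. Your bulk estimate is in fact slightly sharper ($2^rk^r$ versus the stated $r(2k)^r$), so it implies the claimed bound.
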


\begin{proof}
We first note that \cite[Proposition 7.14]{bobkov2019one} yields, for any $k>0$,
\begin{equation}
\begin{split}
\label{eq:Bobkov_bound}
    \mathcal{W}_r(\P_{\boldsymbol{\gamma}, n},\P_{\boldsymbol{\gamma}})^r &\le r2^{r-1}\int |t|^{r-1} |F_{\boldsymbol{\gamma}, n}(t)-F_{\boldsymbol{\gamma}}(t)|\,dt\\
    &\le  r(2k)^r \sup_t |F_{\boldsymbol{\gamma},n}(t)-F_{\boldsymbol{\gamma}}(t))|\\
    &\qquad +r2^{r-1}\int_{\R\setminus[-k,k]} |t|^{r-1} \, \sqrt{F_{\boldsymbol{\gamma}}(t)(1-F_{\boldsymbol{\gamma},n}(t))} \,  \frac{(F_{\boldsymbol{\gamma}}(t)-F_{\boldsymbol{\gamma}, n}(t))^+}{\sqrt{F_{\boldsymbol{\gamma}}(t)(1-F_{\boldsymbol{\gamma},n}(t))}}\,dt\\
    &\qquad +r2^{r-1}\int_{\R\setminus[-k,k]} |t|^{r-1} \, \sqrt{F_{\boldsymbol{\gamma}, n}(t)(1-F_{\boldsymbol{\gamma}}(t))} \, \frac{(F_{\boldsymbol{\gamma}, n}(t)-F_{\boldsymbol{\gamma}}(t))^+}{\sqrt{F_{\boldsymbol{\gamma}, n}(t)(1-F_{\boldsymbol{\gamma}}(t))}}\,dt.
\end{split}
\end{equation}
By Markov's inequality, we have for any $s \geq 1$ and any $t\in \R\setminus\{0\}$,
\begin{align*}
\sqrt{F_{\boldsymbol{\gamma}}(t)(1-F_{\boldsymbol{\gamma},n}(t))} \vee \sqrt{F_{\boldsymbol{\gamma}, n}(t)(1-F_{\boldsymbol{\gamma}}(t))} \le \sqrt{\frac{\mathbb{E}_{\P}[|(\mathbf{X},Y)^\top\boldsymbol{\gamma}|^s]\vee \mathbb{E}_{\P_{n}}[|(\mathbf{X},Y)^\top\boldsymbol{\gamma}|^s]}{|t|^s}}.
\end{align*}
Plugging these bounds into \eqref{eq:Bobkov_bound}, we obtain 
\begin{equation}
\begin{split}
\label{eq:W_bound_q}
&\mathcal{W}_r(\P_{\boldsymbol{\gamma}, n},\P_{\boldsymbol{\gamma}})^r \\
&\le 
r(2k)^r \sup_t |F_{\boldsymbol{\gamma},n}(t)-F_{\boldsymbol{\gamma}}(t))| \\
& +r2^{r-1}\int_{\R\setminus[-k,k]} |t|^{r-1 -s/2} \sqrt{\mathbb{E}_{\P}[|(\mathbf{X},Y)^\top\boldsymbol{\gamma}|^s]\vee \mathbb{E}_{\P_{n}}[|(\mathbf{X},Y)^\top\boldsymbol{\gamma}|^s]} \frac{(F_{\boldsymbol{\gamma}}(t)-F_{\boldsymbol{\gamma}, n}(t))^+}{\sqrt{F_{\boldsymbol{\gamma}}(t)(1-F_{\boldsymbol{\gamma},n}(t))}}\,dt\\
&+r2^{r-1}\int_{\R\setminus[-k,k]} r|t|^{r-1 -s/2} \sqrt{\mathbb{E}_{\P}[|(\mathbf{X,Y)}^\top\boldsymbol{\gamma}|^s]\vee \mathbb{E}_{\P_{n}}[|(\mathbf{X},Y)^\top\boldsymbol{\gamma}|^s]} \frac{(F_{\boldsymbol{\gamma}, n}(t)-F_{\boldsymbol{\gamma}}(t))^+}{\sqrt{F_{\boldsymbol{\gamma}, n}(t)(1-F_{\boldsymbol{\gamma}}(t))}}\,dt.
\end{split}
\end{equation}
Recall that we have assumed $s/2 > r$ where $r \geq 1$. In particular, this means that $|t|^{r-1-s/2}$ is integrable on $\R\setminus [-k,k]$ and 
\begin{align*}
r2^{r-1}\int_{\R\setminus[-k,k]} |t|^{r-1 -s/2}\, dt =\frac{2^rr}{s/2-r}k^{r-s/2}.
\end{align*}
Taking the supremum over $\boldsymbol{\gamma}$ and $t$ in \eqref{eq:W_bound_q} thus yields the claim.
\end{proof}

\begin{lemma}\label{lem:suboptimal}
With probability greater than $1-\alpha$ we have
\begin{align*}
&\sup_{(\boldsymbol{\gamma},t)\in \R^{d+1}\times \R} \frac{(F_{\boldsymbol{\gamma}}(t)-F_{\boldsymbol{\gamma}, n}(t))^+}{\sqrt{F_{\boldsymbol{\gamma}}(t)(1-F_{\boldsymbol{\gamma},n}(t))}}\vee
\sup_{(\boldsymbol{\gamma},t)\in \R^{d+1}\times \R} \frac{(F_{\boldsymbol{\gamma},n}(t)-F_{\boldsymbol{\gamma}}(t))^+}{\sqrt{F_{\boldsymbol{\gamma},n}(t)(1-F_{\boldsymbol{\gamma}}(t))}} \\
&\qquad\qquad\le 4\sqrt{\frac{\log(8/\alpha)+(d+2)\log(2n+1)}{n}}.
\end{align*}
\end{lemma}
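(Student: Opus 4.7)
The plan is to apply a self-normalized Vapnik--Chervonenkis type deviation inequality to each of the two suprema separately, using the fact (already recorded in the proof of Lemma \ref{lem:3}) that the family of half-space indicators $\mathcal{H}$ defined in \eqref{eq:H_def} has VC-dimension at most $d+2$, and then to combine the two bounds by a union bound.

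First I would parametrize each supremum by the corresponding half-space $A=A_{\boldsymbol{\gamma},t}=\{(\mathbf{x},y): (\mathbf{x},y)^{\top}\boldsymbol{\gamma}\le t\}\in\mathcal{H}$, so that $F_{\boldsymbol{\gamma}}(t)=\P(A)$ and $F_{\boldsymbol{\gamma},n}(t)=\P_n(A)$. I would then prove a tail bound of the form
\begin{equation*}
\P\!\left(\sup_{A\in\mathcal{H}}\frac{(\P(A)-\P_n(A))^+}{\sqrt{\P(A)(1-\P_n(A))}}>\varepsilon\right)\le 4\,(2n+1)^{d+2}\exp\!\left(-\tfrac{n\varepsilon^{2}}{16}\right),
\end{equation*}
and symmetrically with the roles of $\P$ and $\P_n$ interchanged. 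The argument is the standard ghost-sample symmetrization: introduce an independent copy $\P_n'$, show that on the event $\{\sup_{A}(\P(A)-\P_n(A))^+/\sqrt{\P(A)(1-\P_n(A))}>\varepsilon\}$, with non-negligible probability the symmetrized ratio built from $\P_n$ and $\P_n'$ also exceeds $\varepsilon/2$; inject Rademacher signs; condition on the $2n$-point pooled sample to reduce, via Sauer's lemma, to a maximum over at most $(2n+1)^{d+2}$ distinct indicator patterns; and finally bound each pattern by a Hoeffding/Bernstein-type self-normalized inequality. The $(1-\P_n)$ factor in the denominator is handled during symmetrization by noting that on the event of interest the empirical and population complements are of comparable order, so the self-normalizing denominator can be replaced by its symmetrized analogue at the cost of an absolute constant absorbed in the $1/16$.

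Second, I would set each of the two tail bounds equal to $\alpha/2$ and solve for $\varepsilon$:
\begin{equation*}
4(2n+1)^{d+2}\exp\!\left(-\tfrac{n\varepsilon^{2}}{16}\right)=\tfrac{\alpha}{2}
\quad\Longleftrightarrow\quad
\varepsilon = 4\sqrt{\frac{\log(8/\alpha)+(d+2)\log(2n+1)}{n}}\,.
\end{equation*}
A union bound over the two events then yields the desired inequality with probability at least $1-\alpha$.

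The main obstacle I anticipate is producing the self-normalized inequality with \emph{exactly} the constants $4$ and $16$ shown above, with the self-normalizing denominator $\sqrt{\P(A)(1-\P_n(A))}$ rather than the more standard $\sqrt{\P(A)(1-\P(A))}$. The fact that the empirical variance appears in the denominator (while the numerator $\P(A)-\P_n(A)$ is the population--empirical increment) means that the usual Vapnik inequality does not apply verbatim; one must carry out the ghost-sample symmetrization so that the denominator is symmetrized together with the numerator, and then track constants through a peeling/chaining over dyadic ranges of $\P(A)$ to ensure that the price paid in the exponent is only a factor of $16$ rather than a larger universal constant.
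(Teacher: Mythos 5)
There is a genuine gap at the heart of your plan: the self-normalized VC inequality
\begin{equation*}
\P\!\left(\sup_{A\in\mathcal{H}}\frac{(\P(A)-\P_n(A))^+}{\sqrt{\P(A)(1-\P_n(A))}}>\varepsilon\right)\le 4\,(2n+1)^{d+2}\exp\!\left(-\tfrac{n\varepsilon^{2}}{16}\right)
\end{equation*}
with the \emph{mixed} denominator $\sqrt{\P(A)(1-\P_n(A))}$ is exactly the statement to be proved, and you assert it with specific constants while deferring its proof to a symmetrization-plus-peeling argument that you yourself flag as the main obstacle. The one concrete justification you offer for handling the $(1-\P_n)$ factor --- that ``on the event of interest the empirical and population complements are of comparable order'' --- is not valid: nothing in the event $\{\P(A)-\P_n(A)>\varepsilon\sqrt{\P(A)(1-\P_n(A))}\}$ forces $1-\P(A)$ and $1-\P_n(A)$ to be comparable (take $\P(A)$ close to $1$ and $\P_n(A)$ near $1/2$), so the step in which the self-normalizing denominator is ``symmetrized together with the numerator at the cost of an absolute constant'' is unsubstantiated, and there is no reason the dyadic peeling would land on the exact prefactor $4$ and exponent $n\varepsilon^2/16$ you need for the stated bound. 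In effect the proposal reverse-engineers the constants from the target and leaves the crux unproved.

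The paper avoids this difficulty entirely with a short pointwise reduction. Enlarge the class to $\mathcal{J}$, the indicators of half-spaces \emph{and their complements}, and split on whether $F_{\boldsymbol{\gamma},n}(t)<1/2$ or $\ge 1/2$: in the first case $1-F_{\boldsymbol{\gamma},n}(t)\ge 1/2$, so the mixed ratio is at most $2\,(F_{\boldsymbol{\gamma}}-F_{\boldsymbol{\gamma},n})^+/\sqrt{F_{\boldsymbol{\gamma}}}$; in the second case one passes to the complement indicator and bounds by $2$ times a ratio self-normalized by its own (empirical or population) mean. This reduces both suprema, at the cost of a factor $2$, to the two classical Vapnik relative-deviation quantities $\sup_{f\in\mathcal{J}}(\E_{\P}[f]-\E_{\P_n}[f])^+/\sqrt{\E_{\P}[f]}$ and $\sup_{f\in\mathcal{J}}(\E_{\P_n}[f]-\E_{\P}[f])^+/\sqrt{\E_{\P_n}[f]}$, for which the tail bound $4\,S_{\mathcal{J}}(2n)e^{-n\epsilon^2/4}$ is available off the shelf (Devroye--Lugosi, Exercises 3.3--3.4); Sauer's lemma with VC dimension $d+2$ and a union bound then give the displayed constant. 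If you want to salvage your route, you would have to actually carry out the symmetrization for the mixed denominator and track the constants; the reduction above shows this effort is unnecessary.
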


\begin{proof}[Proof of Lemma \ref{lem:suboptimal}]
We first define 
\begin{align*}
\mathcal{J}= \left\{ \mathds{1}_{\left\{\mathbf{x}^\top \boldsymbol{\gamma} \, \le \,  t \right\}}, \, \mathds{1}_{\left\{\mathbf{x}^\top \boldsymbol{\gamma} \, > \,  t \right\}}:  (\boldsymbol{\gamma},t)\in \mathbb{R}^{d+1}\times \R \right\}\supseteq \mathcal{H},
\end{align*}
where $\mathcal{H}$ was defined in Lemma \ref{lem:3}.
Considering the cases $F_{\boldsymbol{\gamma}, n}(t)<1/2$ and  $F_{\boldsymbol{\gamma}, n}(t)\ge 1/2$ separately---noting that e.g.\ $\E_{\P_n}[\mathds{1}_{\{\mathbf{x}^\top \boldsymbol{\gamma} > t\}}]=1-\E_{\P_n}[\mathds{1}_{\{\mathbf{x}^\top \boldsymbol{\gamma} \le  t\}}]=1-F_{\boldsymbol{\gamma},n}(t)$---one can check
\begin{equation}
\begin{split}
\label{eq:lem3_bound1}
\sup_{(\boldsymbol{\gamma},t)\in \R^{d+1}\times \R}  \frac{(F_{\boldsymbol{\gamma}}(t)-F_{\boldsymbol{\gamma}, n}(t))^+}{\sqrt{F_{\boldsymbol{\gamma}}(t)(1-F_{\boldsymbol{\gamma},n}(t))}} &\le 2\left( \sup_{f\in \mathcal{J}} \, \frac{(\mathbb{E}_{\P}[f]-\E_{\P_n}[f])^+}{\sqrt{\E_{\P}[f]}} \vee \sup_{f\in \mathcal{J}} \, \frac{(\mathbb{E}_{\P_n}[f]-\E_{\P}[f])^+}{\sqrt{\E_{\P_n}[f]}}\right).
\end{split}
\end{equation}
By symmetry,
\begin{equation}
\begin{split}
\label{eq:lem3_bound2}
\sup_{(\boldsymbol{\gamma},t)\in \R^{d+1}\times \R} \frac{(F_{\boldsymbol{\gamma},n}(t)-F_{\boldsymbol{\gamma}}(t))^+}{\sqrt{F_{\boldsymbol{\gamma},n}(t)(1-F_{\boldsymbol{\gamma}}(t))}} \le2\left( \sup_{f\in \mathcal{J}} \, \frac{(\mathbb{E}_{\P}[f]-\E_{\P_n}[f])^+}{\sqrt{\E_{\P}[f]}} \vee \sup_{f\in \mathcal{J}} \, \frac{(\mathbb{E}_{\P_n}[f]-\E_{\P}[f])^+}{\sqrt{\E_{\P_n}[f]}}\right).
\end{split}
\end{equation}
Concentration for the terms on the right hand side of equations \eqref{eq:lem3_bound1} and \eqref{eq:lem3_bound2} is well studied: indeed, e.g.\ by \cite[Exercises 3.3 \& 3.4]{devroye2001combinatorial} we have
\begin{align*}
\P\left(\sup_{f\in \mathcal{J}} \, \frac{\mathbb{E}_{\P}[f]-\E_{\P_n}[f]}{\sqrt{\E_{\P}[f]}}> \epsilon\right)&\le 4 S_\mathcal{J}(2n)e^{-n\epsilon^2/4},\\
\P\left(\sup_{f\in \mathcal{J}} \, \frac{\E_{\P_n}[f]-\E_{\P}[f]}{\sqrt{\E_{\P_n}[f]}}> \epsilon\right)&\le 4 S_\mathcal{J}(2n)e^{-n\epsilon^2/4}.
\end{align*}
for all $\epsilon>0$,
where $S_\mathcal{J}(2n)$ is the shattering coefficient of $\mathcal{J}$.
Note that by \cite[Theorem 4.1]{devroye2001combinatorial} we have
$S_\mathcal{J}(2n)\le 2 S_\mathcal{H}(2n)$. As the VC-dimension of $\mathcal{H}$ is bounded by $d+2$, Sauer's lemma \cite[Theorem Corollary 4.1]{devroye2001combinatorial} yields
\begin{align*}
\log ( S_\mathcal{J}(2n))&\le (d+2)\log(2n+1).
\end{align*} 
The claim follows by solving the above expression for $\epsilon$.
\end{proof}

\section{Additional Derivations} 

\subsection{Diameter of the support of $\mathbb{P}$ in the simulation} \label{subsubsection:diameter_P} 
Notice that $\mathbf{\tilde{X}}_i^{\top} \boldsymbol{\beta} \ge - \norm{ \boldsymbol{\beta}^{-}}_1$ where the equality holds, making equal to ones the entries of $\mathbf{\tilde{X}}_i$ that corresponds negative values of $\boldsymbol{\beta}$. Similarly, $\mathbf{\tilde{X}}_i^{\top} \boldsymbol{\beta} \le  \norm{ \boldsymbol{\beta}^{+}}_1$. Since $\mathbf{X}_i = \sigma \lambda \mathbf{\tilde{X}}_i$, it follows that $\inf_{\mathbf{X}_i} X_i^{\top} \boldsymbol{\beta} = -\sigma \lambda \norm{ \boldsymbol{\beta}^{-}}_1$  and $\sup_{\mathbf{X}_i} \mathbf{X}_i^{\top} \boldsymbol{\beta} = \sigma \lambda \norm{ \boldsymbol{\beta}^{+}}_1$. Therefore, $Y_i \in [- \sigma (\lambda \norm{ \boldsymbol{\beta}^{-}}_1 + 1), \sigma (\lambda \norm{ \boldsymbol{\beta}^{+}}_1+1)]$. Then, diameter of the support equals
$$\sqrt{ d(\sigma \lambda)^2 + \sigma^2 (\lambda \norm{ \boldsymbol{\beta}^{+}}_1 + \lambda \norm{ \boldsymbol{\beta}^{-}}_1 + 2)^2 } = \sigma \lambda\sqrt{d +   ( \norm{\boldsymbol{\beta}}_1 + 2/\lambda)^2}. $$

\subsection{Derivation of Equation \eqref{eqn:conjecture}}
\label{subsubsection:bound_appendix}
The tuning parameter $\delta_{n,2}$ used in the simulation is 
\[ \delta_{n,2} = n^{-1/4} \cdot  C_{\textrm{sim}}, \quad \textrm{where } C_{\textrm{sim}} \equiv \left( q_{1-\alpha} \right)^{1/2} \cdot \sigma \lambda \left( d + \left( \| \boldsymbol{\beta} \|_{1}   + (2/\lambda) \right)^2 \right)^{1/2}  .    \]

According to equation \eqref{eqn:FOC_sqrtlasso} it is known that $\boldsymbol{\beta}=0_{d \times 1}$ is a solution to the $\sqrt{\text{LASSO}}$ problem if and only if
\begin{equation} \label{eqn:FOC_appendix} 
\frac{ \| \frac{1}{n} \sum_{i=1}^{n} \mathbf{X}_i y_i \|_\infty  }{\sqrt{\frac{1}{n} \sum_{i=1}^n y_i^2 }} \leq \delta_{n,2} = n^{-1/4} \cdot  C_{\textrm{sim}}. 
\end{equation}
Because 
\[ Y_i = \mathbf{X}_i^{\top} \boldsymbol{\beta} + \sigma \varepsilon_i, \]
equation \eqref{eqn:FOC_appendix} holds if and only if
\begin{equation} \label{eqn:aux_foc_appendix}
\frac{ \| \left( \frac{1}{n} \sum_{i=1}^{n} \mathbf{X}_i \mathbf{X}^{\top}_i \right) \boldsymbol{\beta} + \frac{\sigma}{\sqrt{n}} \frac{1}{\sqrt{n}} \sum_{i=1}^{n} \mathbf{X}_i \varepsilon_i \|_\infty  }{\sqrt{\frac{1}{n} \sum_{i=1}^n y_i^2 }} \leq n^{-1/4} \cdot  C_{\textrm{sim}}. 
\end{equation}
Because $\mathbf{X}_i = \sigma \lambda \tilde{\mathbf{X}}_i$ 
where $\tilde{\mathbf{X}}_i$ is a $d$-dimensional vector of independent uniform random variables over the $[0,1]$, then 
\[  \frac{1}{n} \sum_{i=1}^{n} \mathbf{X}_i \mathbf{X}^{\top}_i \overset{p}{\rightarrow} \mathbb{E} [ \mathbf{X}_i \mathbf{X}^{\top}_i] =  \frac{1}{3} \sigma^2 \lambda^2  \mathbb{I}_{d},\]
where we have used that $\mathbb{E}[ \mathbf{\tilde{X}}_{i,j}^2 ] = 1/3$  because $\mathbf{\tilde{X}}_{i,j}$ is a uniform distribution on the $[0,1]$ interval. 
The Continuous Mapping Theorem and Central Limit Theorem then imply that 
\[ \Big \| \left( \frac{1}{n} \sum_{i=1}^{n} \mathbf{X}_i \mathbf{X}^{\top}_i \right) \boldsymbol{\beta} + \frac{\sigma}{\sqrt{n}} \frac{1}{\sqrt{n}} \sum_{i=1}^{n} \mathbf{X}_i \varepsilon_i \Big \|_\infty \overset{p}{\rightarrow} \frac{1}{3} \sigma^2 \lambda^2 \| \boldsymbol{\beta} \|_{\infty}.     \]
For the denominator, 
\begin{eqnarray*}
\frac{1}{n} \sum_{i=1}^n y_i^2  \overset{p}{\rightarrow} \mathbb{E}[Y^2_{i}] = \boldsymbol{\beta}^{\top} \mathbb{E} [ \mathbf{X}_i \mathbf{X}^{\top}_i ] \boldsymbol{\beta} + \sigma^2 \mathbf{V}(\varepsilon_{i}) = \frac{1}{3} \sigma^2 \lambda^2 \boldsymbol{\beta}^{\top} \boldsymbol{\beta}   + \sigma^2 \mathbf{V}(\varepsilon_{i})  ,  
\end{eqnarray*}
where we have used the fact that $\epsilon_{i}$ is mean zero and independent of $\mathbf{\tilde{X}}_i$. 
The left-hand side of \eqref{eqn:aux_foc_appendix} is thus bounded above with high probability by 

\[ \frac{ (1/3) \sigma^2 \lambda^2 \| \boldsymbol{\beta} \|_{\infty}}{ \sqrt{(1/3) \sigma^2 \lambda^2} \sqrt{\boldsymbol{\beta}^{\top}  \boldsymbol{\beta}  } } = \sqrt{1/3}  \cdot \sigma  \cdot \lambda \cdot \Big \| \frac{\boldsymbol{\beta}}{ \sqrt{\boldsymbol{\beta}^{\top}  \boldsymbol{\beta}  } }  \Big \|_{\infty}.  \]

This means that the event in \eqref{eqn:FOC_appendix} occurs with high probability if 
\begin{equation} \label{eqn:zeros_appendix}
\sqrt{1/3}  \cdot \sigma  \cdot \lambda \cdot \Big \| \frac{\boldsymbol{\beta}}{ \sqrt{\boldsymbol{\beta}^{\top}  \boldsymbol{\beta}  } }  \Big \|_{\infty} \leq  \frac{1}{n^{1/4}} C_{\textrm{sim}}.
\end{equation}
Using the definition of $C_{\textrm{sim}}$, the event in \eqref{eqn:zeros_appendix} occurs if and only if

\[ n \leq 9 \cdot \Big \| \frac{\boldsymbol{\beta}}{ \sqrt{\boldsymbol{\beta}^{\top}  \boldsymbol{\beta}  } }  \Big \|^{-1/4}_{\infty} \left( q_{1-\alpha} \right)^{2} \cdot \left( d + \left( \| \boldsymbol{\beta} \|_{1}   + (2/\lambda) \right)^2 \right)^{2}.  \]

Thus, a sample size smaller than the right-hand side of the equation above implies that, with high probability, $\boldsymbol{\beta}=0_{d \times 1}$ will be a solution to the $\sqrt{\text{LASSO}}$ problem. 

\subsection{Gaussian distributions with similar prediction errors, but large Wasserstein distance}\label{subsection:gaussian_example} 

Suppose $(\mathbf{X},Y) \sim N_{d+1}(0, \mathbb{I}_{d+1} )$. Because $(\mathbf{X},Y)$ are, by assumption, independent and have mean zero, the prediction error of any linear predictor $\mathbf{X}^{\top} \boldsymbol{\gamma}$ equals the variance of $Y$ plus the variance of $\mathbf{X}^{\top} \boldsymbol{\gamma}$; that is
\[ \mathbb{E}[(Y-\mathbf{X}^{\top} \boldsymbol{\boldsymbol{\gamma}})^2]=1+ \| \boldsymbol{\gamma} \|_2^2. \]
The prediction error scales with $\|\boldsymbol{\gamma}\|_2$, so it makes sense to restrict this norm. Let us focus on predictors for which $\|\boldsymbol{\gamma}\|_2 = 1$. 

Consider now a random vector $(\Tilde{\mathbf{X}}, \Tilde{Y})$. Assume $\Tilde{\mathbf{X}} = \mathbf{X} + \mathbf{V}$ where $\mathbf{V} \sim  N_d(0, \sigma^2_v \mathbb{I}_d )$ is an independent source of measurement error. Set $\Tilde{Y} = Y$. For any $\boldsymbol{\gamma}$ such that $\|\boldsymbol{\gamma}\|=1$:
\[ \mathbb{E}\left[(\tilde{Y}-\Tilde{\mathbf{X}}^{\top}\boldsymbol{\gamma})^2\right]=2 + \sigma^2_v. \] 
Thus, in this example 
\[ \left( \mathbb{E}\left[(Y-\mathbf{X}^{\top} \boldsymbol{\boldsymbol{\gamma}})^2\right] - \mathbb{E}\left[(\tilde{Y}-\Tilde{\mathbf{X}}^{\top}\boldsymbol{\gamma})^2 \right] \right) = \sigma^2_v. \]
Consequently, the difference in prediction errors equals $\sigma^2_v$.  

Let $\mathbb{P}$ denote the distribution of $(\mathbf{X},Y)$ and, analogously, let $\Tilde{\mathbb{P}}$ denote the  distribution of $(\Tilde{\mathbf{X}},\Tilde{Y})$. The distance between $\mathbb{P}$ and $\Tilde{\mathbb{P}}$ can be considerably large when measured using the standard $d$-dimensional Wasserstein metric. In fact, algebra shows that 
        $$ \mathcal{W}_2(\mathbb{P},\Tilde{\mathbb{P}}) = ( \sqrt{1  + \sigma_v^2} - 1) d^{1/2}~, $$
Thus, when $d$ is large, the standard $d$-dimensional Wasserstein distance suggests that $\mathbb{P}$ and $\Tilde{\mathbb{P}}$ are very different from one another. This stands in contrast with the magnitude of the difference in prediction errors associated to $\mathbb{P}$ and $\Tilde{\mathbb{P}}$ which is equal to $\sigma^2_v$.  

In this example, one can further show that if we set $\rho(\cdot)= \|\cdot \|_1$, then for any $\sigma > 0$ we have 
$$ \widehat{\mathcal{W}}_{2, \rho, \sigma}(\mathbb{P},\Tilde{\mathbb{P}}) \le \sigma_v~.$$

Therefore, the example further shows that two distributions can be close in $\rho$-MSW metric, even when their standard $d$-dimensional Wasserstein distance is large. 

\subsection{Alternative criterion for choosing $\delta$}\label{subsection:blanchet_delta}

In this section we present a brief discussion of the differences between our recommendation for selecting $\delta$ (which specifically targets out-of-sample performance) and the recommendation in \cite{blanchet2019robust}. We show that if we use the same criterion as in \cite{blanchet2019robust}, our optimal $\delta$ would be upper bounded by their recommendation. As we explain below, this has to do with the fact that our $\rho$-MSW balls are larger than those based on the standard Wasserstein metric.

\textit{Distributionally robust representation:} First, it is worth mentioning that both our paper and  \cite{blanchet2019robust} present a distributionally robust representation of the $\sqrt{\text{LASSO}}$ and related estimators. The key difference is that we define our class of testing distributions using the $\rho$-MSW metric (which we have denoted by $\widehat{\mathcal{W}}_{r, \rho, \sigma}$),  instead of the Wasserstein metric. Based on our Remark 2, our balls are larger than the Wasserstein balls used in \cite{blanchet2019robust}.  

Another difference, relative to their results, is that \cite{blanchet2019robust} take the same testing and training distributions of the outcome variable; see their Proposition 2, Equation (14), Theorem 1. Thus, to make our results comparable to them we set $\sigma=0$ and use the notation $\widehat{\mathcal{W}}_{r, \rho, 0}$. We denote the Wasserstein metric used in \cite{blanchet2019robust} by $\mathcal{W}^B$.

\textit{Criterion:} \cite{blanchet2019robust}  recommend $\delta^*_n$ as the $1-\alpha$ quantile of the profile function $R_n(\boldsymbol{\beta})$: 
\begin{align}\label{eq:blanchet}
R_n(\boldsymbol{\beta})=\min\{ \mathcal{W}^B(\P_n, \Q):\, \boldsymbol{\gamma} \mapsto \E_{\Q}[|Y- \mathbf{X}^\top \boldsymbol{\gamma}|^r] \text{ has minimizer } \boldsymbol{\beta}\},
\end{align}
see \cite[eq.\ (16) and Section 4.2]{blanchet2019robust}. Assuming a linear regression model with Gaussian errors, $Y= \mathbf{X}^T \boldsymbol{\beta}^* + e$, and an appropriate normalization of the covariates $\mathbf{X}$, it can be shown that 
$$\sqrt{R_n(\boldsymbol{\beta}^*)}\le \frac{\pi}{\pi-2} \frac{\Phi^{-1}(1-\alpha/2d)}{\sqrt{n}},$$
with probability asymptotically larger than $1-\alpha$, as $n \to \infty$, see \cite[Theorem 7 and Remark 1]{blanchet2019robust}. This aligns with the recommendation of \cite{belloni2011square}.

Consider a modification of their profile function based on the $\rho$-MSW metric as follows:
$$ \Tilde{R}_n(\boldsymbol{\beta})=\min\{ \widehat{\mathcal{W}}_{r, \rho, 0}(\P_n, \Q):\, \boldsymbol{\gamma} \mapsto \E_{\Q}[|Y- \mathbf{X}^\top \boldsymbol{\gamma}|^r] \text{ has minimizer } \boldsymbol{\beta}\}~.$$
If we define by $\Tilde{\delta}_n^*$ the $1-\alpha$ quantile of the modified profile function $\Tilde{R}_n(\boldsymbol{\beta})$, the event $ \{{R}_n(\boldsymbol{\beta}) \le z \}$ is included in the event   $ \{ \Tilde{R}_n(\boldsymbol{\beta}) \le z \}$ because for any $\Q$ such that $\boldsymbol{\beta}$ is a minimizer of $\E_{\Q}[|Y- \mathbf{X}^\top \boldsymbol{\gamma}|^r]$ we have that $\widehat{\mathcal{W}}_{r, \rho, 0}(\P_n, \Q) \le \mathcal{W}^B(\P_n, \Q)$ due to Remark 2. This implies that
$$ \P(\Tilde{R}_n(\boldsymbol{\beta}) \le z ) \ge \P({R}_n(\boldsymbol{\beta}) \le z )~, $$
which allows us to conclude  $\Tilde{\delta}_n^* \le \delta_n^*$.

\section{Additional Simulations} \label{sec:additional_sims}

Suppose now that the training data consists of $n$ i.i.d.\ draws from a Gaussian, homoskedastic, linear regression model. In other words
\[ Y_i = \mathbf{X}_i^{\top} \boldsymbol{\beta} + \sigma_{\varepsilon} \varepsilon_i, \]
where $\varepsilon_i \sim \mathcal{N}(0,1)$ and $\mathbf{X}_i \sim \mathcal{N}_d(\mathbf{0}, \mathbb{I}_d)$, with $\varepsilon_{i} \bot \mathbf{X}_i$. The parameters controlling the simulation design are $(\boldsymbol{\beta}, \sigma_{\epsilon}, d)$. 

We are interested in comparing the out-of-sample performance of a linear predictor that uses coefficients estimated via the $\sqrt{\text{LASSO}}$ ($r=2$), with other popular regularization procedures (Ridge regression and the LASSO). We use the standard tuning parameter for the $\sqrt{\text{LASSO}}$ in \cite{belloni2014pivotal} 
\[ \delta_n \equiv  (1.1) \cdot \Phi^{-1} \left( 1 - \frac{\alpha}{2d} \right) \cdot n^{-1/2},  \]
and we take $\alpha=.05$. For Ridge regression, we use the approximately optimal oracle recommendation in Corollary 6 of \cite{hastie2022surprises}.\footnote{In our set-up this equals $\sigma_{\epsilon} d / \| \mathbf{\beta}\|^2$.} For LASSO, we use the oracle recommendation discussed in \cite{belloni2011square}, which equals $\delta_n$ multiplied by the unknown paramater $\sigma_{\epsilon}$.\footnote{We implement the LASSO using the Matlab function \texttt{lasso}. }

We consider different sizes of the training data $n_{\textrm{train}} \in \{2,50,100,150, \ldots, 2000\}$. We set the size of the testing data to be $n_{\textrm{testing}}=1000$, and we benchmark the performance of each of the regularized estimators relative to the root mean-squared prediction error of a predictor based on ordinary least-squares (OLS). When $d>n_{\textrm{train}}$ we use the ``ridgless'' estimator in \cite{hastie2022surprises} as a benchmark.  

For the testing data, we consider two different distributions. First, we consider $n_{\textrm{testing}}$ new draws from the true data generating process. Second, we perturb the true data generating process according to the worst-case distribution derived in  Corollary 1 with $\delta_n$ equal to the tuning parameter used by the $\sqrt{\text{LASSO}}$. According to the first theorem in the paper, the $\sqrt{\text{LASSO}}$ offers robustness against these types of perturbations. 

The simulation results provide two interesting findings. First, when the testing distribution and the DGP are the same, the out-of-sample prediction error of the Ridge, LASSO, and $\sqrt{\text{LASSO}}$ estimators are larger than the OLS estimator. In this case, neither of the penalized estimators has any attractive property in terms of out-of-sample performance. Second,  when the testing distribution and the DGP are different---and, in particular, the testing distribution is adversarial---then the OLS estimator has a larger out-of-sample prediction error in comparison to the penalized estimators. Moreover, the $\sqrt{\text{LASSO}}$ estimator reports the lowest out-of-sample prediction error among the estimators.  Importantly, in this case, the $\sqrt{\text{LASSO}}$  has clearly superior performance (up to 25\%) relative to optimally tuned Ridge and LASSO.

\begin{figure}[t!]
    \begin{center}
     \begin{subfigure}[b]{0.45\textwidth}
        \centering
        \includegraphics[width=\textwidth]{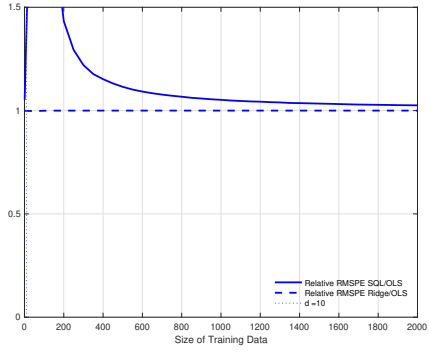}
    \caption{ Ridge: $d=10$ }
    \end{subfigure}
    \hfill
    \begin{subfigure}[b]{0.45\textwidth}
        \centering
        \includegraphics[width=\textwidth]{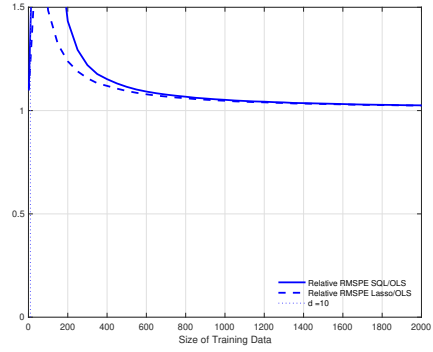}
    \caption{ LASSO: $d=10$}     
    \end{subfigure}  
    \vskip\baselineskip
    \begin{subfigure}[b]{0.45\textwidth}
        \centering
        \includegraphics[width=\textwidth]{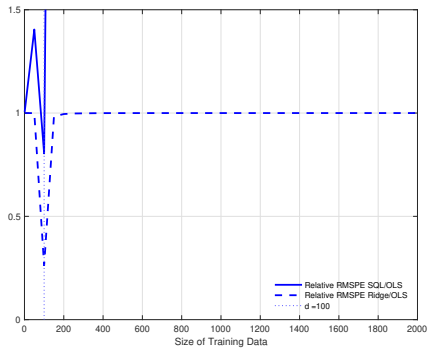}
    \caption{ Ridge: $d=100$ }
    \end{subfigure}
    \hfill 
    \begin{subfigure}[b]{0.45\textwidth}
        \centering
        \includegraphics[width=\textwidth]{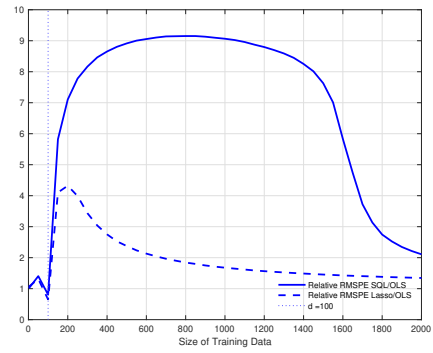}
    \caption{ LASSO: $d=100$} 
    \end{subfigure}
    
    \end{center}
    \caption{\small Relative out-of-sample prediction error of the Ridge, LASSO, and $\sqrt{\text{LASSO}}$ estimators with respect to the OLS estimator. No perturbation in testing data. $\sqrt{\text{LASSO}}$ estimator is the solid line. A dashed line is used for the other estimators. $\boldsymbol{\beta} = (1,\ldots,1)^{\top}$ (vector of $d$ ones), $\sigma_\varepsilon = 1$.}
    \label{fig:training_testing}
    
\end{figure}

\begin{figure}[t!]
    \begin{center}
     \begin{subfigure}[b]{0.45\textwidth}
        \centering
        \includegraphics[width=\textwidth]{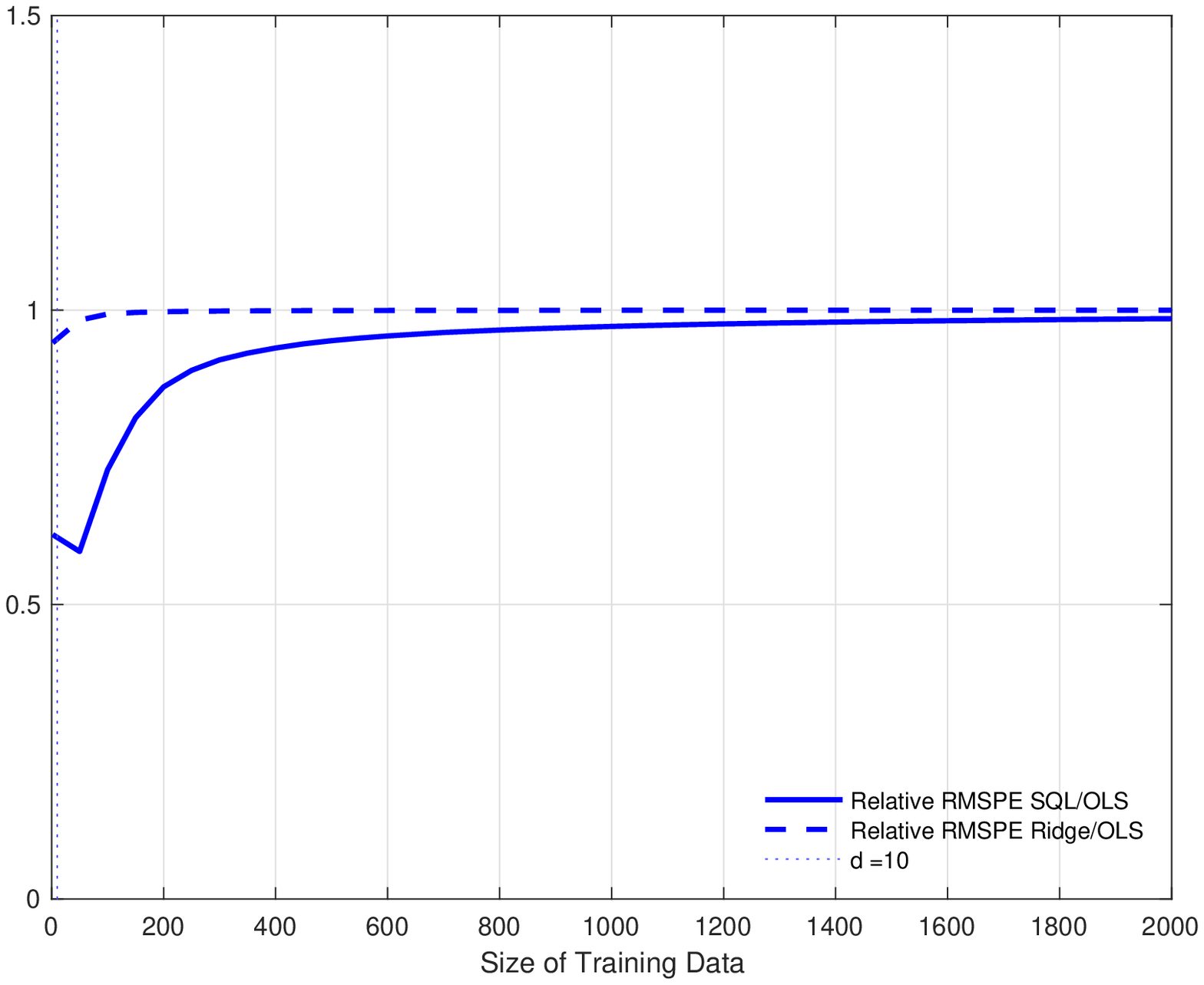}
    \caption{ Ridge: $d=10$ }
    \end{subfigure}
    \hfill
    \begin{subfigure}[b]{0.45\textwidth}
        \centering
        \includegraphics[width=\textwidth]{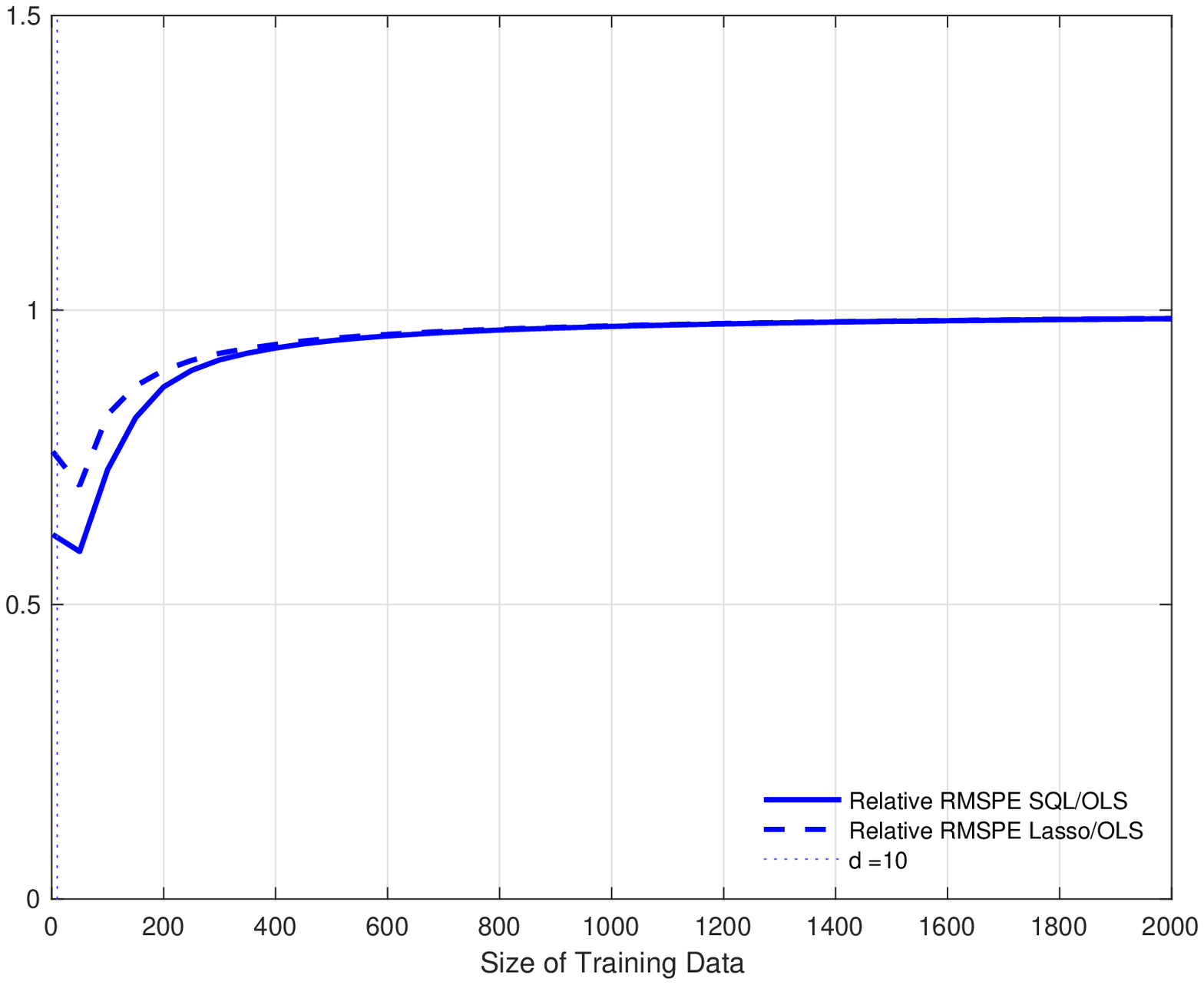}
    \caption{ LASSO: $d=10$}     
    \end{subfigure}  
    \vskip\baselineskip
    \begin{subfigure}[b]{0.45\textwidth}
        \centering
        \includegraphics[width=\textwidth]{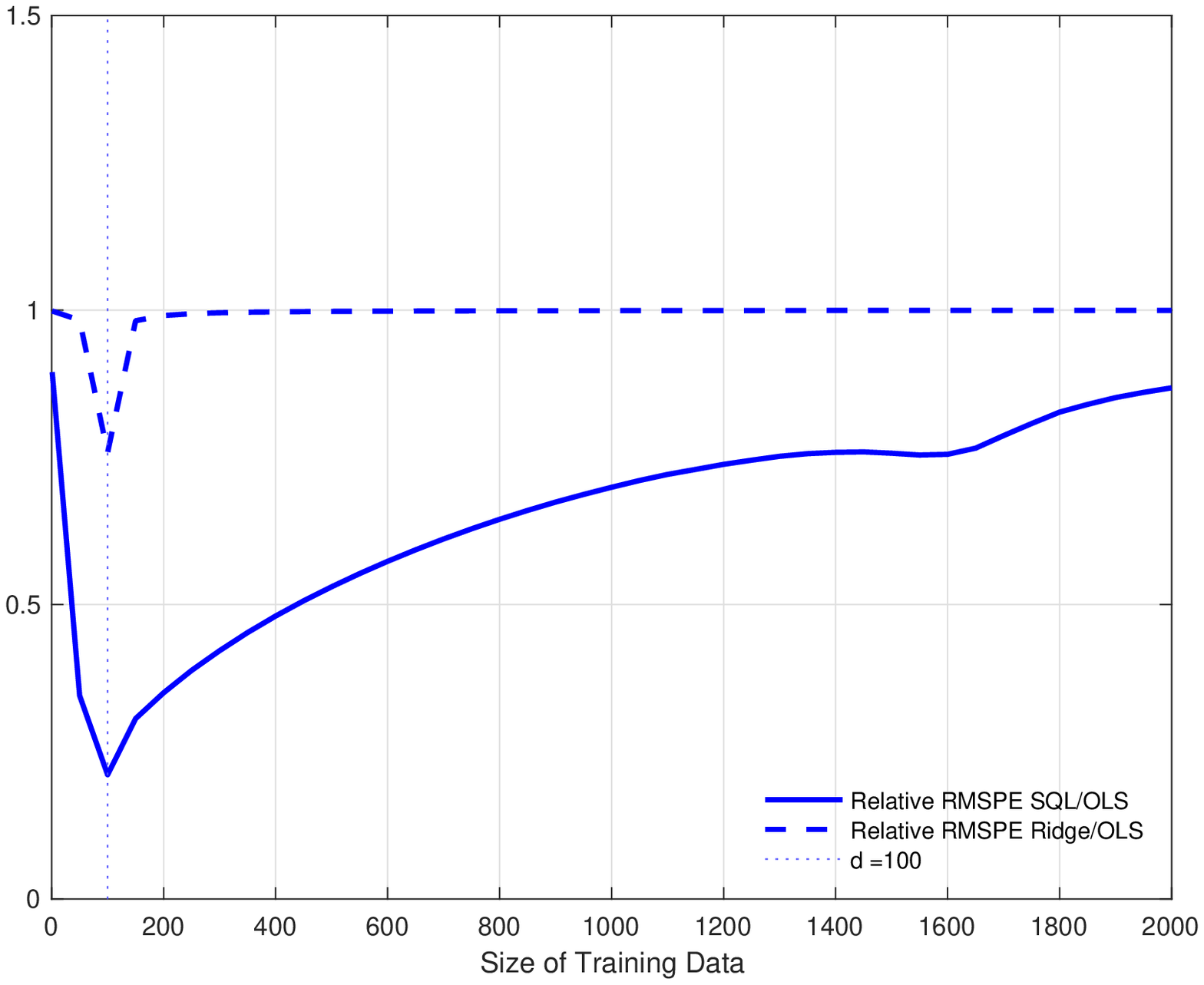}
    \caption{ Ridge: $d=100$ }
    \end{subfigure}
    \hfill 
    \begin{subfigure}[b]{0.45\textwidth}
        \centering
        \includegraphics[width=\textwidth]{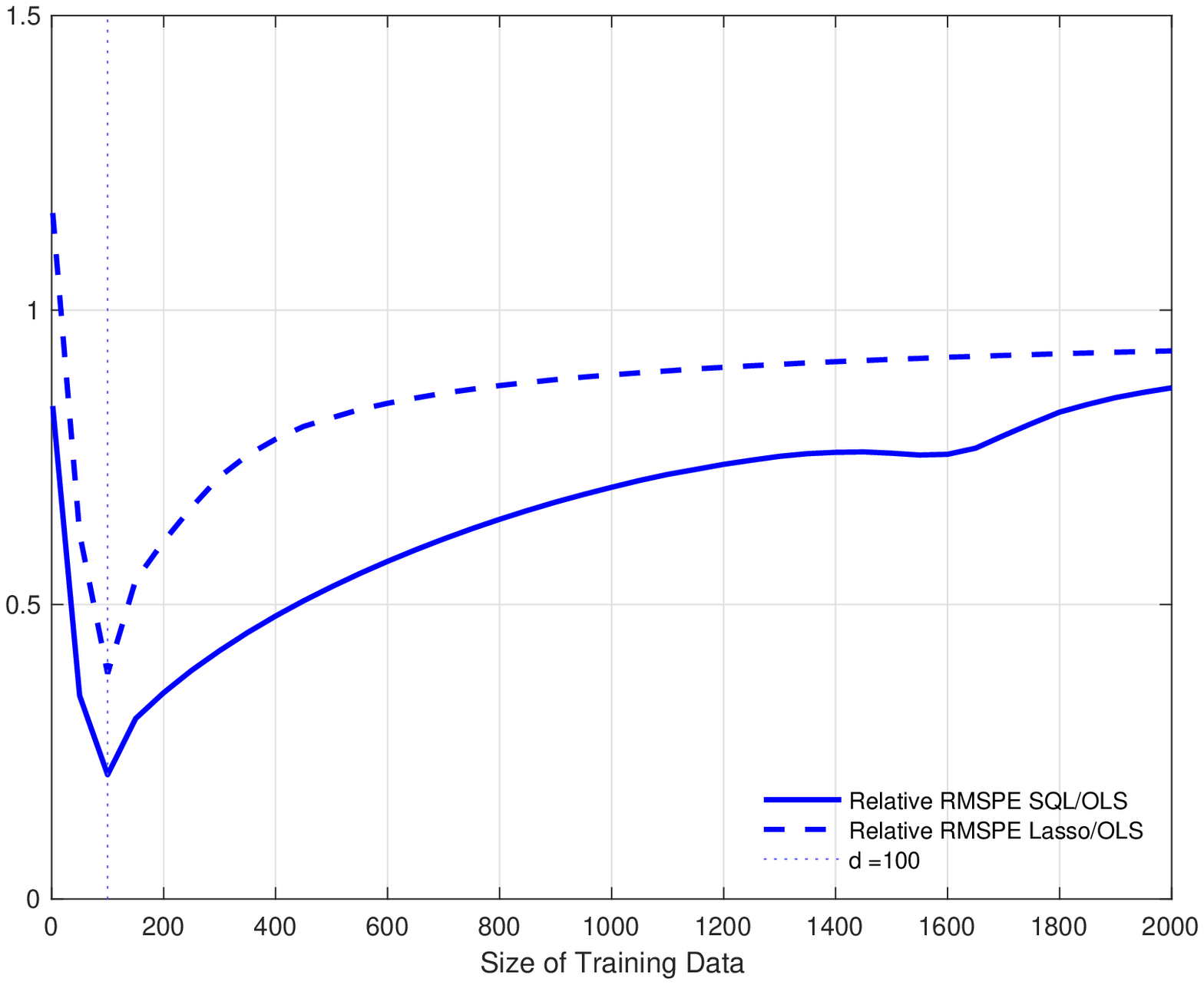}
    \caption{ LASSO: $d=100$} 
    \end{subfigure}
    
    \end{center}
    \caption{ \small Relative out-of-sample prediction error of the Ridge, LASSO, and $\sqrt{\text{LASSO}}$ estimators with respect to the OLS estimator. Perturbed testing data. $\sqrt{\text{LASSO}}$ estimator is the solid line. A dashed line for the other estimators. $\boldsymbol{\beta} = (1,\ldots,1)^{\top}$ (vector of $d$ ones), $\sigma_\varepsilon = 1$.}
    \label{fig:training_testing}
    
\end{figure}

As we discussed in Section \ref{sec:simulation}, the oracle recommendation for the regularization parameter of $\sqrt{\textrm{LASSO}}$ (based on our analysis of the $\rho$-MSW metric) can be more than 10 times larger than the standard recommendation in \cite{belloni2011square}. This raises the question of whether the out-of-sample performance of the Lasso reported in Panel d) of Figure \ref{fig:training_testing} can be improved by also using a larger regularization parameter. Lemma 2 in \cite{tian2018selective}---which shows that the $\sqrt{\textrm{LASSO}}$ and the LASSO share an explicit reparameterization of their solution paths conditional on the data---imply that this is indeed possible. In fact, Lemma 2 in \cite{tian2018selective} shows that, for each data realization and each possible regularization parameter, $\delta_{\textrm{SQL}}$, one can find a regularization parameter for the LASSO, $\delta_{\textrm{LASSO}}$, such that both the LASSO and the $\sqrt{\textrm{LASSO}}$ estimators coincide. As a consequence, using $\delta_{\textrm{LASSO}}$, guarantees that the out-of-sample performance of the two procedures must coincide.  

We consider the same Gaussian, homoskedastic, linear regression model described at the beginning of this section. We set $d=100$ and set $\boldsymbol{\beta} = (1,\ldots,1)^{\top}$ and $\sigma_{\epsilon}=1$. We consider different training sizes $n_{\textrm{train}} \in \{200,250,300, \ldots, 2000\}$. For each data realization we implement the formula in Lemma 2 in \cite{tian2018selective} to obtain a new regularization parameter $\delta_{\textrm{LASSO}}$. For the $\sqrt{\textrm{LASSO}}$ we once again use the tuning parameter in \cite{belloni2011square}. The testing distribution is the worst-case distribution derived in Corollary 1. 

Panel a) in Figure \eqref{fig:Lemma2_Tian} reports the ratio of $\delta_{\textrm{LASSO}}$ relative the oracle regularization parameter for the LASSO. For each sample size $n_{\textrm{train}}$, the figure reports the average ratio across data realizations. The figure shows that in order for the LASSO to have the same out-of-sample performance as the $\sqrt{\textrm{LASSO}}$, the new regularization parameter needs to be, on average, 10 times larger than the standard tuning parameter. Panel b) in Figure \eqref{fig:Lemma2_Tian} confirms that the new regularization parameter indeed aligns the out-of-sample performance of both procedures.  

\begin{figure}[t!]
    \begin{center}
     \begin{subfigure}[b]{0.45\textwidth}
        \centering
        \includegraphics[width=\textwidth]{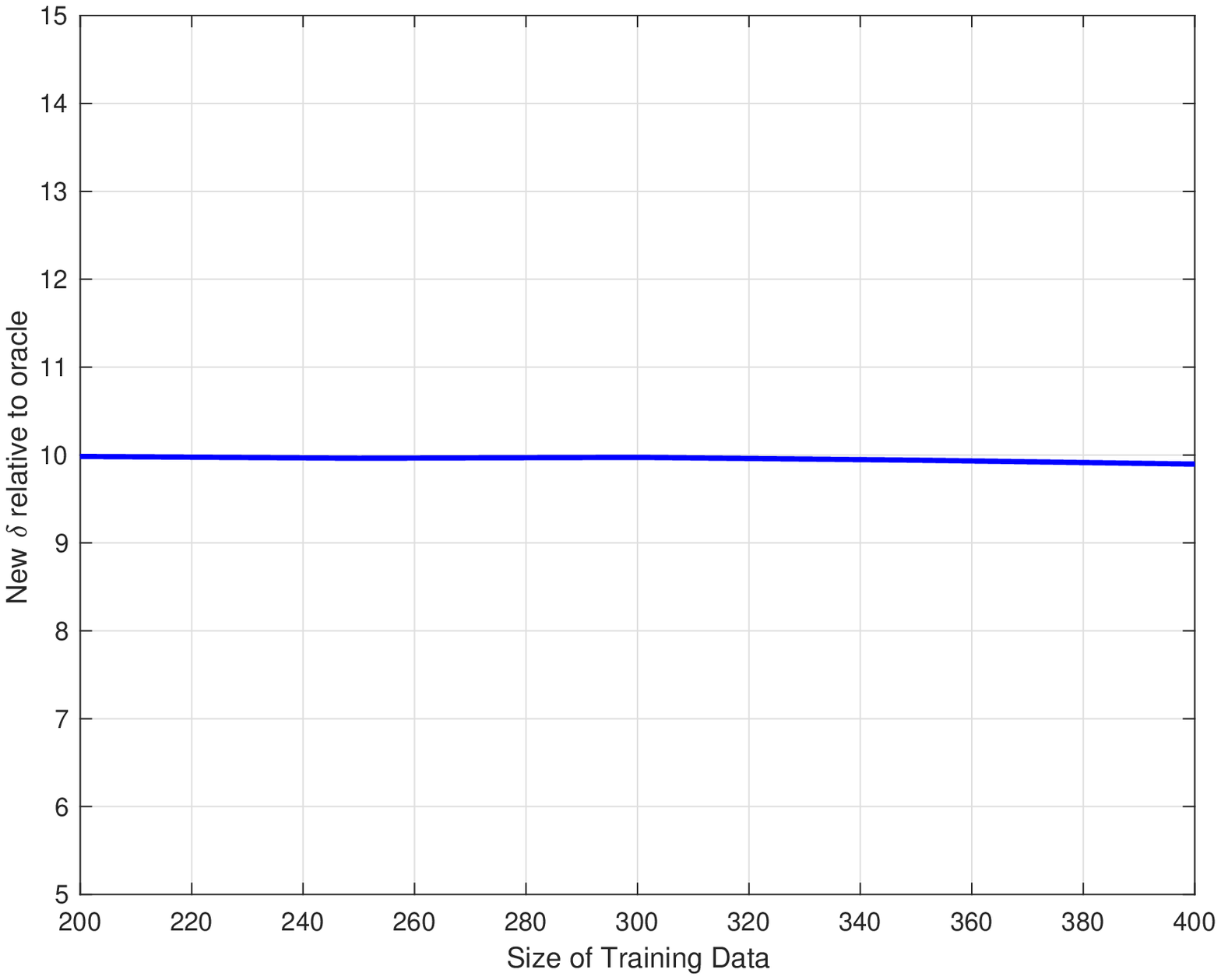}
    \caption{ $\delta_{\textrm{LASSO}}/\delta_{\textrm{oracle}}$}
    \end{subfigure}
    \hfill
    \begin{subfigure}[b]{0.45\textwidth}
        \centering
        \includegraphics[width=\textwidth]{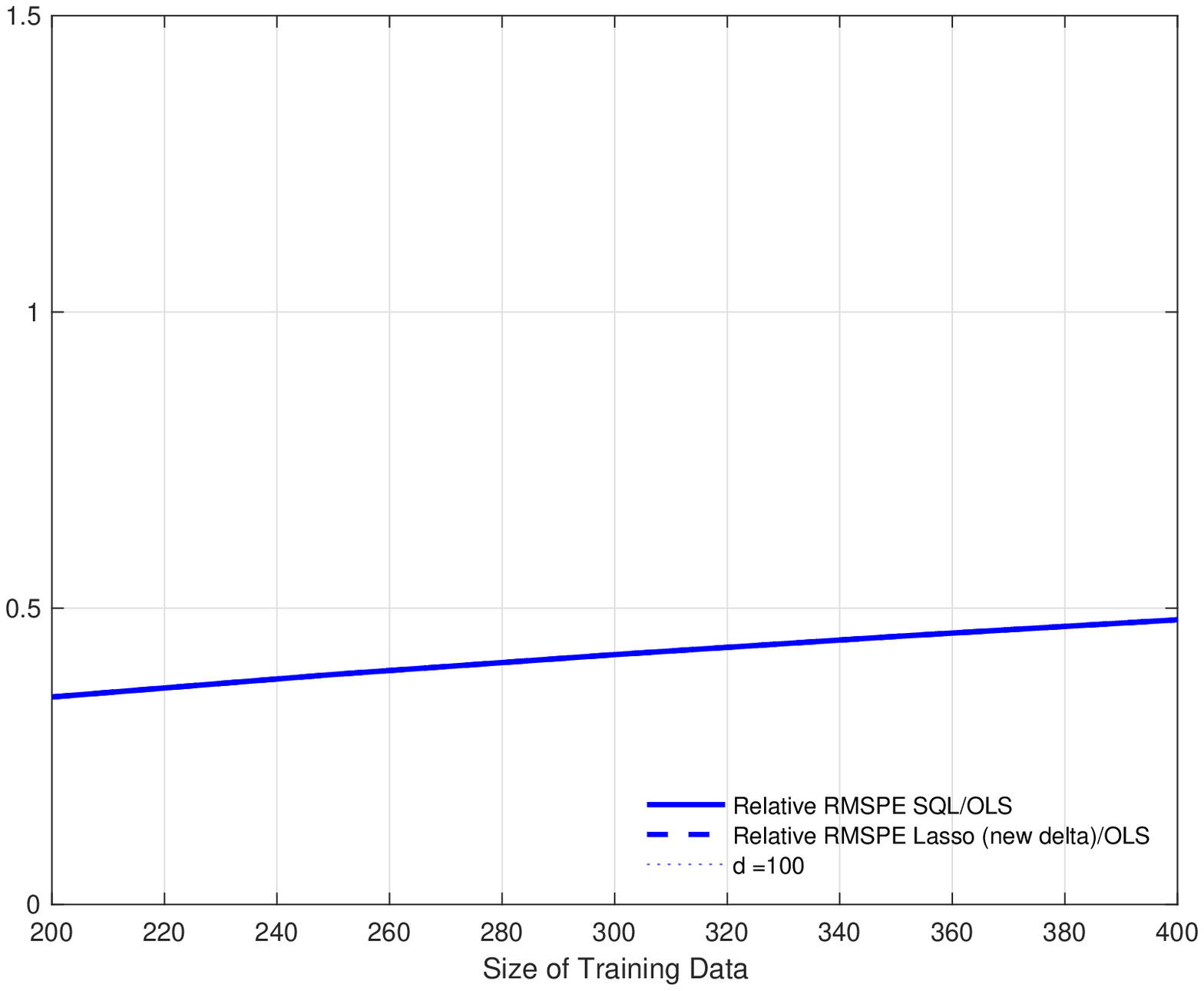}
    \caption{ LASSO vs. $\sqrt{\textrm{LASSO}}$ }     
    \end{subfigure}  

    \end{center}
    \caption{\small Panel a) reports the regularization parameter, $\delta_{\textrm{LASSO}}$, such that both the LASSO and $\sqrt{\textrm{LASSO}}$ coincide; following Lemma 2 in \cite{tian2018selective}. Panel b) reports the relative out-of-sample prediction for both the $\sqrt{\textrm{LASSO}}$ and the LASSO, but with the latter using the regularization parameter from Lemma 2 in \cite{tian2018selective}.}
    \label{fig:Lemma2_Tian}
    
\end{figure}

\end{document}